\documentclass[11pt]{amsart}

\usepackage{amsaddr}
\usepackage{amscd,amssymb}
\usepackage{hyperref}
\usepackage[all]{xy}

\title[Epsilon-Strongly Groupoid Graded Rings]{Epsilon-Strongly Groupoid 
Graded Rings, The Picard Inverse Category and Cohomology}

\newtheorem{thm}{Theorem}
\newtheorem*{thmnonum}{Theorem}
\newtheorem{prop}[thm]{Proposition}
\newtheorem{lem}[thm]{Lemma}

\theoremstyle{definition}
\newtheorem{defi}[thm]{Definition}
\newtheorem{exa}[thm]{Example}
\newtheorem{rem}[thm]{Remark}

\newcommand{\m}{{}^{-1}}
\newcommand{\N}{\mathbb{N}}
\newcommand{\image}{\rm{im\,}}
\DeclareMathOperator{\End}{End}
\DeclareMathOperator{\Aut}{Aut}
\DeclareMathOperator{\Hom}{Hom}
\DeclareMathOperator{\Pic}{Pic}
\DeclareMathOperator{\ob}{ob}

\begin{document}

\author{Patrik Nystedt}
\address{Department of Engineering Science,
University West, 
SE-46186 Trollh\"{a}ttan, Sweden}

\author{Johan \"{O}inert}
\address{Department of Mathematics and Natural Sciences,
Blekinge Institute of Technology,
SE-37179 Karlskrona, Sweden}

\author{H\'{e}ctor Pinedo}
\address{Escuela de Matem\'{a}ticas,
Universidad Industrial de Santander,
Carrera 27 Calle 9,
Edificio Camilo Torres
Apartado de correos 678,
Bucaramanga, Colombia}

\email{{\scriptsize patrik.nystedt@hv.se; johan.oinert@bth.se; hpinedot@uis.edu.co}}

\subjclass[2010]{16W50, 16E99, 16D99, 14C22}

\keywords{groupoid graded ring, epsilon-strongly graded ring, partially invertible module,
Picard groupoid, Picard inverse category, cohomology, Leavitt path algebra, partial skew groupoid ring}

\begin{abstract}
We introduce the class of partially invertible modules and show that it is an inverse category
which we call the Picard inverse category.
We use this ca\-te\-gory to generalize the classical construction of crossed
products to, what we call, generalized epsilon-crossed products and show that
these coincide with the class of epsilon-strongly groupoid graded rings.
We then use generalized epsilon-crossed groupoid products to obtain a generali\-zation,
from the group graded situation to the groupoid graded case, 
of the bijection from a certain second cohomology group, defined by the grading
and the functor from the groupoid in question to the Picard
inverse category, to the collection of equivalence classes of rings
epsilon-strongly graded by the groupoid.
\end{abstract}

\maketitle

\section{Introduction}\label{sec:intro}

Almost 40 years ago N{\v a}st{\v a}sescu and Van Oystaeyen \cite{nas1982}
proved an elegant result 
that relates the collection of group graded equivalence classes of strongly 
graded rings to certain second cohomology groups.
Namely, let $S$ be a ring.
We always assume that $S$ is associative and equipped
with a multiplicative identity $1_S$.
The ring $S$ is called \emph{graded} by a group $G$ (or \emph{$G$-graded}) 
if there is a set $\{ S_g \}_{g \in G}$ of additive subgroups of
$S$ such that $S = \oplus_{g \in G} S_g,$ and for all $g,h \in G$ the inclusion 
$S_g S_h \subseteq S_{gh}$ holds.
The ring $S$ is called \emph{strongly graded} if for all
$g,h \in G$ the equality $S_g S_h = S_{gh}$ holds.
Given two $G$-graded rings $S$ and $T$, a ring homomorphism 
$f : S \rightarrow T$ is called \emph{graded} if for all $g,h \in G$
the inclusion $f(S_g) \subseteq T_g$ holds.
Now, by \cite[Proposition I.3.13]{nas1982}
the collection of strongly graded rings can be parametrized by the so called 
\emph{generalized crossed products} $(A,F,f)$, for rings
$A$ and group homomorphisms $F$ from $G$ to the Picard group $\Pic(A)$ of $A$. 
Indeed, for each $g \in G$ we put $F(g) = [ P_g ]$ (the $A$-bimodule isomorphism
class of $P_g$) and we assume that $F(e) = [A],$ where $e$ denotes the identity element of $G$.
The map $f$ is a  factor set associated with $F$.
By this we mean a collection of $A$-bimodule
isomorphisms $f_{g,h} : P_g \otimes_A P_h
\rightarrow P_{gh}$ chosen so that the following diagram
commutes
\[
\CD P_g \otimes_A P_h \otimes_A P_p @> 
{\rm id}_{P_g} \otimes f_{h,p} >> P_g \otimes_A P_{hp} \\
@V f_{g,h} \otimes {\rm id}_{P_p} VV @VV f_{g,hp} V \\
P_{gh} \otimes_A P_{p}
@> f_{gh,p} >> P_{g h p} \\
\endCD
\]
for all $g,h,p \in G$.
The multiplication in $(A,F,f) = \oplus_{g \in G} P_g$ is defined by the biadditive extension of the
relation $x \cdot y = f_{g,h}(x \otimes y)$ for all 
$x \in P_g$, all $y \in P_h$ and all $g,h \in G$. 
Let $(A,F)$ denote the collection of all generalized crossed products
of the form $(A,F,f)$, where $f$ is a factor set associated with $F$.
Given $D$ and $D'$ in $(A,F)$ put 
$D \approx D'$ if there is an isomorphism of graded rings $D \rightarrow D'$
which is simultaneously an $A$-bimodule isomorphism.
Then $\approx$ is an equivalence relation on $(A,F)$ and
we can define $C(A,F) = (A,F) / \approx$.
Let $U(Z(A))$ denote the unit group of the center
$Z(A) = \{ a \in A \mid \forall b \in A, \ ab = ba \}$ of $A$.
The classical cohomology groups $H^n ( G , U(Z(A)) )$, for $n \geq 0$,
can then be defined, and in particular the corresponding second cohomology group.

\begin{thm}[N{\v a}st{\v a}sescu and Van Oystaeyen \cite{nas1982}]\label{maintheorem}
If $A$ is a ring, $F : G \rightarrow \Pic(A)$ is a group 
homomorphism and $f$ is a factor set associated with $F$,
then the map $H^2 ( G , U(Z(A)) ) \rightarrow C(A , F)$,
defined by $[q] \mapsto qf$, is bijective.
\end{thm}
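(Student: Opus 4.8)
The plan is to show that the assignment $[q]\mapsto[qf]$ is well defined, surjective and injective. The only non-formal ingredients are two standard facts about an invertible $A$-bimodule $P$, which I would record first (proving or citing them): (i) the ring $\End_{A\text{-}A}(P)$ of its $A$-bimodule endomorphisms is isomorphic to $Z(A)$, an isomorphism being obtained from $\End_{A\text{-}A}(A)\cong Z(A)$ by transport along the self-equivalence $P\otimes_A(-)$ of the category of $A$-bimodules; in particular $\Aut_{A\text{-}A}(P)\cong U(Z(A))$; and (ii) the induced homomorphism $\Pic(A)\to\Aut(Z(A))$ makes $Z(A)$, and hence $U(Z(A))$, into a $G$-module via $F$, and it is with respect to this module structure that $H^n(G,U(Z(A)))$ is formed. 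Fact (i) lets one write every $A$-bimodule automorphism of $P_{gh}$ as multiplication by a unique central unit, and fact (ii) is the twist that must be carried along whenever central scalars are pushed through tensor factors.

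\emph{Well-definedness.} For $q\in Z^2(G,U(Z(A)))$ I would first check that $(qf)_{g,h}:=q(g,h)f_{g,h}$ is again a factor set associated with $F$: substituting $qf$ into the commutative diagram that defines a factor set and pulling the central scalars out of the bimodule maps (immediate) and out of the tensor factors (along the $F$-action on $Z(A)$), that diagram for $qf$ collapses to the one for $f$ decorated by an equality in $U(Z(A))$ which is precisely the $2$-cocycle condition on $q$; hence $qf\in(A,F)$. If moreover $q$ and $q'$ are cohomologous, say $q'$ equals $q$ times the coboundary of a normalized map $t\colon G\to U(Z(A))$, then the graded additive map $(A,F,qf)\to(A,F,q'f)$ which on $P_g$ is multiplication by $t(g)^{\pm1}$ is an $A$-bimodule isomorphism, and the single requirement that it respect products of homogeneous elements unwinds to exactly the coboundary relation; so it is an isomorphism of graded rings and $qf\approx q'f$. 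Therefore $[q]\mapsto[qf]$ descends to a well-defined map $H^2(G,U(Z(A)))\to C(A,F)$.

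\emph{Surjectivity and injectivity.} For surjectivity, take $(A,F,f')\in(A,F)$; replacing it by a $\approx$-equivalent representative I may assume $f'$ is built on the same chosen modules $P_g$ as $f$. For each $g,h$ the composite $f'_{g,h}\circ f_{g,h}^{-1}$ is an $A$-bimodule automorphism of $P_{gh}$, hence multiplication by a unique $q(g,h)\in U(Z(A))$, so $f'=qf$; comparing the factor-set diagrams of $f'$ and of $f$ and again extracting the central scalars forces $q\in Z^2(G,U(Z(A)))$, and then $[f']=[qf]$ lies in the image. For injectivity, suppose $[qf]=[q'f]$, witnessed by a map $\varphi\colon(A,F,qf)\to(A,F,q'f)$ that is simultaneously a graded ring isomorphism and an $A$-bimodule isomorphism. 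Its restriction to $P_e=A$ is a ring automorphism which is also an $A$-bimodule map, hence equals the identity; on each $P_g$ it is an $A$-bimodule automorphism, hence multiplication by some $t(g)\in U(Z(A))$ with $t(e)=1$; and the condition that $\varphi$ respect the product of $x\in P_g$ with $y\in P_h$ (for the respective multiplications), after cancelling the isomorphism $f_{g,h}$, becomes exactly the statement, for the $F$-twisted cohomology, that $q'$ and $q$ differ by the coboundary of $t$. Hence $[q]=[q']$ in $H^2(G,U(Z(A)))$.

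The point I expect to be delicate rather than deep is the bookkeeping: making ``multiplication by $q(g,h)$ on $P_{gh}$'' canonical through fact (i), and correctly threading the $G$-action of fact (ii) through every diagram so that the $2$-cocycle and coboundary identities emerge with the right twist. Everything else is formal manipulation of the factor-set diagram and of $2$-(co)chains; and it is exactly this $\Pic(A)$/$U(Z(A))$ layer that the paper later replaces by its inverse-category and groupoid analogues in order to obtain the groupoid-graded generalization.
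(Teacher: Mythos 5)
Your proposal is correct and takes essentially the same route as the paper: although Theorem~\ref{maintheorem} itself is only quoted from \cite{nas1982}, the paper's proof of its generalization (Proposition~\ref{proplast}(a)--(d), via Proposition~\ref{propequivalence} and the twist $\gamma_P$ of Remark~\ref{remarkgamma}) consists of exactly your steps: identifying bimodule automorphisms of $P_{gh}$ with units of $Z(A)$, checking that $qf$ is a factor set, extracting a cocycle from $f'\circ f^{-1}$, and matching coboundaries with graded ring isomorphisms that are bimodule maps in each degree.
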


Many natural examples of rings, such as rings of
matrices, crossed product algebras defined by separable extensions
and groupoid rings 
are not, in a natural way, graded by groups, but instead by groupoids
(see e.g. \cite{lundstrom2004,lundstrom2006} or
Section~\ref{Sec:ExEpsGraded}
of the present article).
Let $G$ be a groupoid, that is, a category where
each morphism is an isomorphism.
The
classes
of objects and morphisms
of $G$ are denoted by $G_0$ and $G_1$, respectively. 
If $g \in G_1$, then the domain and codomain of 
$g$ is denoted by $d(g)$ and $c(g),$ respectively.
We let $G_2$ denote the set of all pairs $(g,h) \in G_1 \times G_1$
that are composable, that is, such that $d(g)=c(h)$. 
From now on, assume that $G$ is small, that is,
such that $G_1$ is a set, and let $S$ be a ring which is \emph{graded} by $G$.
Recall from \cite{lundstrom2004,lundstrom2006} that this 
means that there is a set $\{ S_g \}_{g \in G_1}$ of additive subgroups of
$S$ such that $S = \oplus_{g \in G_1} S_g$ and, for all $g,h \in G_1$, 
$S_g S_h \subseteq S_{gh}$, if $(g,h) \in G_2$,
and $S_g S_h = \{ 0 \}$, otherwise.
In that case, $S$ is called \emph{strongly graded} if for all
$(g,h) \in G_2$ the equality $S_g S_h = S_{gh}$ holds.
By \cite[Proposition 4.1]{lundstrom2006}
the collection of strongly groupoid graded rings can be parametrized by 
generalized groupoid crossed products $(A,F,f)$, for rings
$A$ and groupoid homomorphisms $F$ from $G$ to the Picard groupoid PIC.
Here $A$ denotes the direct product of the rings $\{ F(e) \}_{e \in G_0}$.
Westman \cite{westman1971} (see also \cite{renault1980}) has developed a cohomology theory
for groupoids which extends the classical group cohomology theory.
In particular, the corresponding second cohomology group 
$H^2( G , Z(A) )$ can be defined.
The set $C(A,F)$ is defined analogously to the group graded case.

\begin{thm}[Lundstr\"{o}m \cite{lundstrom2006}]\label{maintheoremgroupoid}
If $G$ is a groupoid, $F : G \rightarrow {\rm PIC}$ is a functor 
and $f$ is a factor set associated with $F$,
then the map $H^2 ( G , U(Z(A)) ) \rightarrow C(A , F)$,
defined by $[q] \mapsto qf$, is bijective.
\end{thm}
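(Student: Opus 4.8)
The plan is to follow, step by step, the template of the proof of Theorem~\ref{maintheorem} in the group graded case, adapting each argument to the groupoid setting. Here the ambient ring is the product $A = \prod_{e \in G_0} F(e)$, so that $Z(A) = \prod_{e \in G_0} Z(F(e))$ and $U(Z(A)) = \prod_{e \in G_0} U(Z(F(e)))$, while for $g \in G_1$ the module $P_g$ (with $F(g) = [P_g]$) is an invertible $F(c(g))$-$F(d(g))$-bimodule; such a bimodule induces a ring isomorphism ${}^g(-) \colon Z(F(d(g))) \to Z(F(c(g)))$, determined by ${}^g c \cdot x = x \cdot c$ for all $x \in P_g$, and this is the $G$-action making $U(Z(A))$ the coefficient groupoid-module for Westman's groups $H^n(G,U(Z(A)))$, which I would compute from the complex of normalized cochains. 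After the usual normalization one may assume $F(\mathrm{id}_e) = [F(e)]$ and that $f_{\mathrm{id}_{c(g)},g}$ and $f_{g,\mathrm{id}_{d(g)}}$ are the canonical isomorphisms. The first thing to check is that the map is well defined: given a normalized $2$-cocycle $q$, the collection $qf$ defined by $(qf)_{g,h} = q_{g,h}\cdot f_{g,h} \colon P_g \otimes_A P_h \to P_{gh}$ (for $(g,h) \in G_2$, with $q_{g,h}$ lying in the component $U(Z(F(c(g)))) = U(Z(F(c(gh)))$ and acting on $P_{gh}$ by central multiplication) is again a factor set associated with $F$, since substituting $qf$ for $f$ in the defining diagram inserts the four units $q_{h,p}$, $q_{g,hp}$, $q_{g,h}$, $q_{gh,p}$ and commutativity of the new diagram is equivalent to the $2$-cocycle identity for $q$; moreover, if $q' = q\,\delta(c)$ for a normalized $1$-cochain $c = (c_g)_{g \in G_1}$, then $\varphi = \bigoplus_{g \in G_1}(c_g \cdot -) \colon (A,F,qf) \to (A,F,q'f)$ is a graded $A$-bimodule isomorphism which, by the coboundary relation, is also a ring homomorphism, so $[q] \mapsto [(A,F,qf)]$ descends to a well-defined map $H^2(G,U(Z(A))) \to C(A,F)$.

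For surjectivity, let $f'$ be an arbitrary factor set associated with $F$. For $(g,h) \in G_2$ the composite $f'_{g,h}\circ f_{g,h}^{-1}$ is an $A$-bimodule automorphism of the invertible bimodule $P_{gh}$. The essential input here — which I would isolate as a lemma, and which is the only place where invertibility of the modules is genuinely used — is the standard fact that for an invertible $F(c(g))$-$F(d(g))$-bimodule $P_{gh}$ the automorphism group $\Aut(P_{gh})$ of bimodule automorphisms is canonically isomorphic to $U(Z(F(c(gh))))$, acting by central multiplication. This produces units $q_{g,h}$ with $f'_{g,h} = q_{g,h}\cdot f_{g,h}$, i.e. $f' = qf$; comparing the defining diagrams of $f$ and of $f' = qf$ and cancelling the maps $f_{\bullet,\bullet}$ forces $q$ to be a $2$-cocycle, so $[q] \in H^2(G,U(Z(A)))$ and $[q]$ maps to $[(A,F,f')]$.

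For injectivity, suppose $(A,F,qf)\approx(A,F,q'f)$, witnessed by a graded ring isomorphism $\varphi$ that is simultaneously an $A$-bimodule isomorphism. Its restriction to the components of degree $\mathrm{id}_e$, $e \in G_0$, is an $A$-bimodule ring endomorphism of $A$, hence the identity, while for $g \in G_1$ the component $\varphi_g \colon P_g \to P_g$ is a bimodule automorphism, so by the lemma of the previous step $\varphi_g = c_g \cdot -$ for some $c_g \in U(Z(F(c(g))))$. Writing out that $\varphi$ intertwines the products $\cdot_{qf}$ and $\cdot_{q'f}$, and using $x \cdot c_h = ({}^g c_h)\cdot x$ on $P_g$ together with the $A$-bilinearity of $f_{g,h}$, one gets in degree $(g,h)$ the relation $c_{gh}\,q_{g,h} = c_g\,{}^g c_h\, q'_{g,h}$, that is, $q/q' = \delta(c)$, whence $[q] = [q']$. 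Combined with the converse implication already observed in the first paragraph, this gives injectivity and completes the proof.

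The step I expect to be the main obstacle is the structural lemma identifying $\Aut(P)$ with the group of central units for an invertible bimodule in this groupoid context: this is where partial invertibility is really needed and where the two a priori different coefficient rings $F(c(g))$ and $F(d(g))$, and the induced center isomorphism ${}^g(-)$, must be handled correctly. The remaining work is a careful transcription of the group graded argument, the only genuine bookkeeping being to keep track of which object of $G$ each central unit is attached to and to verify that the cochain identities produced by the diagrams are exactly those of Westman's differential on $U(Z(A))$.
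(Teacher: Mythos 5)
Your proposal is correct and follows essentially the same route as the paper: the paper itself only cites Lundstr\"{o}m for this statement, but its proof of the generalization (Proposition~\ref{proplast}, parts (a)--(d)) proceeds exactly as you do, establishing well-definedness via the cocycle identity in the factor-set diagram, surjectivity by writing $f'_{g,h}\circ f_{g,h}^{-1}$ as central multiplication, and injectivity via the coboundary correspondence with graded $A$-bimodule ring isomorphisms. The ``structural lemma'' you isolate is precisely Bass's result on equivalence data (Proposition~\ref{propequivalence} together with Remark~\ref{remarkgamma}, where your ${}^g(-)$ appears as $\gamma_{P_g}$), so it is already the key input used in the paper as well.
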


In \cite{epsilon2018} the authors of the present article
introduced the class of epsilon-strongly group-graded rings.
This class properly contains both the class of strongly graded rings 
and the class of unital partial crossed products.
The main goal of the present article is to show a simultaneous 
generali\-zation (see Theorem~\ref{maintheoremepsilon}) of Theorem~\ref{maintheorem} 
and Theorem~\ref{maintheoremgroupoid} that holds for an 
even wider family of rings, namely the class
of epsilon-strongly groupoid graded rings.
Indeed, let $S$ be a ring which is graded by a small groupoid $G$.
We say that $S$ is \emph{epsilon-strongly graded by $G$}
if for each $g \in G_1$, $S_g S_{g^{-1}}$ is a unital ideal of $S_{c(g)}$
such that for all $(g,h) \in G_2$
the equalities $S_g S_h = S_g S_{g^{-1}} S_{gh} = S_{gh} S_{h^{-1}} S_h$ hold. 
Let $\epsilon_g$ denote the multiplicative identity element of $S_g S_{g^{-1}}$
and put $R = \oplus_{e \in G_0} S_e$.
In this context it turns out that the multiplication map 
$S_g \otimes_R S_h \rightarrow S_{gh}$, for $(g,h) \in G_2$,
is injective with image equal to $\epsilon_g S_{gh}$. 
In particular this implies that the $R$-isomorphism classes of the modules
$[S_g]$ do not form a groupoid. Instead they form an inverse category PIC$_{cat}$
which we call \emph{the Picard inverse category}.
The collection of epsilon-strongly groupoid graded rings can then be parametrized by 
genera\-lized groupoid epsilon-crossed products $(A,F,f)$, for rings
$A$ and partial functors of inverse categories $F : G \rightarrow {\rm PIC}_{cat}$.
Here $A$ denotes the direct product of the rings $\{ F(e) \}_{e \in G_0}$
and $f$ is a partial factor set.

On the other hand, in \cite{E-1} the concept of a partial action was introduced as an efficient tool to study $C^* $-algebras, permitting to characterize various important classes of them as crossed products by partial actions. The  study of partial actions  and partial representations of groups on algebras was initiated in \cite{DE}, and extended to the groupoid situation in \cite{bagio2012}. Recently, 
Dokuchaev and Khrypchenko \cite{DKh} have developed a cohomology theory
for partial actions of groups on monoids which we
extend to the groupoid setting.
In particular, we define the corresponding second cohomology group $H^2( G , Z(A) )$.
The set $C(A,F)$ is defined analogously to the group graded case, and we obtain the following theorem, which is the main result of this article.

\begin{thm}\label{maintheoremepsilon}
If $G$ is a groupoid,
$F : G \rightarrow {\rm PIC}_{cat}$ is a partial functor of inverse categories
and $f$ is a partial factor set associated with $F$,
then the map $H^2 ( G , U(Z(A)) ) \rightarrow C(A,F)$,
defined by $[q] \mapsto qf$, is bijective.
\end{thm}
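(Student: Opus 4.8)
The plan is to follow the blueprint of the classical proof of Theorem~\ref{maintheorem}, and of its groupoid analogue Theorem~\ref{maintheoremgroupoid}, but with careful bookkeeping of the idempotents $\epsilon_g$ that intervene because the objects $F(g)$ of ${\rm PIC}_{cat}$ are only \emph{partially} invertible. Fix once and for all $A$-bimodule representatives $P_g$ of the isomorphism classes $F(g)$, for $g \in G_1$, so that a partial factor set $f$ associated with $F$ is a family of $A$-bimodule isomorphisms $f_{g,h} \colon P_g \otimes_A P_h \to \epsilon_g P_{gh}$, indexed by $(g,h) \in G_2$, making the expected (partial) associativity square commute. The crucial input, taken from the structure theory of ${\rm PIC}_{cat}$ developed in the preceding sections, is that for a partially invertible $A$-bimodule $P$ the monoid $\End_{A\text{-}A}(P)$ is commutative and isomorphic to a corner $Z(A)\epsilon_P$ of $Z(A)$; hence every $A$-bimodule automorphism of such a $P$, or of a sub-bimodule of the form $\epsilon P$, is multiplication by a unit of the relevant corner of $Z(A)$. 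This is exactly the datum that makes $U(Z(A))$, together with the transport maps induced by the $P_g$, into the coefficients of the (groupoid version of the Dokuchaev--Khrypchenko) second cohomology group $H^2(G,U(Z(A)))$, and it is why the idempotents appear in the coboundary operator $\delta$.

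\emph{The map is well defined.} Writing $(qf)_{g,h} = q_{g,h}\,f_{g,h}$, where $q_{g,h}$ denotes the prescribed partial central unit acting on $\epsilon_g P_{gh}$, a direct diagram chase shows that the associativity square for $qf$ commutes if and only if $q$ satisfies the $2$-cocycle identity; since $f$ is a factor set, $qf$ is one whenever $q \in Z^2(G,U(Z(A)))$, so $(A,F,qf) \in (A,F)$. If moreover $q = \delta(t)$ for a $1$-cochain $t = (t_g)_{g \in G_1}$, define $\Phi_t \colon (A,F,f) \to (A,F,qf)$ to be the identity on $A$ and multiplication by $t_g$ on each $P_g$; the statement that $\Phi_t$ is a graded $A$-bilinear ring isomorphism unwinds, on the ring-homomorphism side, to precisely the formula defining $\delta(t)$. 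Hence $(A,F,f) \approx (A,F,qf)$, and $[q] \mapsto [(A,F,qf)]$ is a well-defined map $H^2(G,U(Z(A))) \to C(A,F)$.

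\emph{Injectivity.} Let $\Psi \colon (A,F,qf) \to (A,F,q'f)$ be an isomorphism of graded rings that is simultaneously an $A$-bimodule isomorphism. Being $A$-bilinear and unital, $\Psi$ is the identity on $A$, so its restriction to each $P_g$ is an $A$-bimodule automorphism; by the structural fact recalled above this restriction is multiplication by a unit $t_g$ of the appropriate corner of $Z(A)$, and thus $\Psi = \Phi_t$. Requiring $\Psi$ to intertwine the multiplications governed by $qf$ and $q'f$ then forces $q' = \delta(t)\,q$, whence $[q] = [q']$ in $H^2(G,U(Z(A)))$.

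\emph{Surjectivity.} Let $(A,F,g) \in (A,F)$ be arbitrary, so $g$ is a partial factor set associated with $F$. For each $(a,b) \in G_2$ the composite $q_{a,b} := g_{a,b} \circ f_{a,b}^{-1}$ is an $A$-bimodule automorphism of $\epsilon_a P_{ab}$, hence multiplication by a unit of the corresponding corner of $Z(A)$; that both $f$ and $g$ make the associativity square commute translates, after cancelling $f$, into the $2$-cocycle identity for $q = (q_{a,b})$, and by construction $g = qf$. Therefore $(A,F,g)$ is the image of $[q]$, and the map is bijective. I expect the essential difficulty to lie not in any single one of these steps but in the consistent bookkeeping of the idempotents $\epsilon_g$: ensuring that the domains and codomains of all the $f_{g,h}$, $q_{g,h}$ and $\delta(t)_{g,h}$ match, and that the groupoid-and-partial coboundary operator $\delta$ is set up so that $f \mapsto \delta(t)f$ is exactly the transformation induced by $\Phi_t$. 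Once the identification $\End_{A\text{-}A}(P) \cong Z(A)\epsilon_P$ and the correspondence between epsilon-strongly graded rings and generalized epsilon-crossed products are in hand, what remains is a careful but essentially routine verification.
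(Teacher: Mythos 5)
Your proposal is correct and follows essentially the same route as the paper: the paper proves the theorem via Proposition~\ref{proplast}, whose parts (a)--(c) are exactly your three steps (cocycle $\Rightarrow$ factor set, every factor set is $qf$, and coboundaries correspond precisely to graded $A$-bimodule isomorphisms), with the key input in both cases being Proposition~\ref{propequivalence} (Bass), which identifies $A$-bimodule automorphisms of $\epsilon_g P_{gh}$ with multiplication by units of the corresponding corner of $Z(A)$.
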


Here is a detailed outline of this article.
In Section~\ref{preliminariescategories},
we state our conventions on categories. 
In particular, we recall the definition of inverse categories.
In that section, we also show that the collection of partial bijections
between sets form an inverse category which 
we denote by BIJ$_{cat}$ (see Proposition~\ref{BIJCAT}).
Inside this category sits the well-known groupoid BIJ of bijections between sets.
In Section~\ref{preliminariesrings}, 
we show (see Proposition~\ref{ISOISOC}) that the collection
of partial (commutative) ring isomorphisms
{\rm ISO}$_{cat}$ ({\rm ISOC}$_{cat}$) is an inverse subcategory of {\rm BIJ}$_{cat}$.
This category contains, in turn, the well-known
groupoid ISO (ISOC) having all (commutative) rings
as objects and ring isomorphisms as morphisms. We also show a result concerning
central idempotents in rings that we need in subsequent sections.
In Section~\ref{sectionpicardinversecategory},
we recall the definition of (pre-)equivalence data
and some properties of such systems, we also introduce the Picard inverse category PIC$_{cat}$
(see Definition~\ref{definitionpicardinversecategory} 
and Theorem~\ref{invcat}).
In Section~\ref{sectionepsilonstrongly},
we define epsilon-strongly groupoid graded rings
(see Definition~\ref{defepsilonstronglygradedring}).
In Section~\ref{Sec:ExEpsGraded}, we provide
examples of epsilon-strong groupoid gradings on partial skew groupoid rings,
Leavitt path algebras and Morita rings.
In Section~\ref{sectionepsilonmodules},
we define epsilon-strongly groupoid  graded modules
(see Definition~\ref{definitionepsilonmodules}) and we 
provide a characterization of them 
(see Proposition~\ref{tensorg}) that generalizes
a result previously obtained for strongly group graded modules.
At the end of the section,
after putting $R = \oplus_{e \in G_0} S_e$,
we show (see Proposition~\ref{tensor})
that the multiplication maps
$m_{g,h} : S_g \otimes_R S_h \rightarrow \epsilon_g S_{gh} = S_{gh} \epsilon_{h^{-1}}$,
for $(g,h) \in G_2$, are $R$-bimodule isomorphisms.
In particular this implies that for every $g \in G_1$, the sextuple
\begin{displaymath}
	( \epsilon_g R , \epsilon_{g^{-1}}R , S_g , S_{g^{-1}} , m_{g,g^{-1}} , m_{g^{-1},g} )
\end{displaymath}
is a set of equivalence data.
In Section~\ref{sectionepsiloncrossedproducts},
we introduce generalized epsilon-crossed groupoid products
(see Definition~\ref{defpartialgeneralizedepsiloncrossedproduct})
and we show that they parametrize the family of 
epsilon-strongly groupoid graded rings
(see Proposition~\ref{firstcorrespondence} and Proposition~\ref{secondcorrespondence}).
In Section~\ref{sectionpartialcohomology},
we extend the construction of a partial 
cohomology theory for partial actions of groups on 
commutative monoids, from \cite{DKh}, to partial actions of groupoids. 
In Section~\ref{proofmain},
we use the results in the previous sections 
to prove Theorem~\ref{maintheoremepsilon}.

\section{Preliminaries on categories}\label{preliminariescategories}

In this section, we state our conventions on categories. 
In particular, we recall the definition of inverse categories.
We introduce the new notion of a partial functor
of inverse categories (see Definition~\ref{defpartialfunctor})
and we show that the composition of two partial functors
of inverse categories is again a partial functor of inverse categories
(see Proposition~\ref{composition}).
In this section, we also show that the collection of partial bijections
between sets form an inverse category which 
we denote by BIJ$_{cat}$ (see Proposition~\ref{BIJCAT}).
Suppose that $G$ is a category.
The classes of objects and morphisms
of $G$ are denoted by $G_0$ and $G_1$, respectively. 
Recall that $G$ is called \emph{small} if $G_1$ is a set.
If $g \in G_1$, then the domain and codomain of 
$g$ are denoted by $d(g)$ and $c(g),$ respectively.
If $n \geq 2$, then we let $G_n$ denote the set of
all $(g_1,\ldots,g_n) \in \times_{i=1}^n G_1$
that are composable, that is, such that
for every $i \in \{ 1,\ldots,n-1 \}$ the relation $d(g_i) = c(g_{i+1})$ holds.
The category $G$ is called a \emph{groupoid} if  to each 
$g \in G_1$ there is a unique $h \in G_1$
such that $(g,h) \in G_2$, $(h,g) \in G_2$,
$g h = c(g)$ and $h g = d(g)$. In that case, we write $h = g^{-1}$.
A subcategory $H$ of a groupoid $G$ is called a \emph{subgroupoid}
if the restriction of the map $(\cdot)^{-1}$ on $G_1$
to $H_1$ coincides with the map $(\cdot)^{-1}$ on $H_1$.

Let $G$ be an inverse category.
Recall that this means that
there to each $g \in G_1$
is a unique $h \in G_1$ such that
$(g,h) \in G_2$, $(h,g) \in G_2$,
$g h g = g$ and $h g h = h$.
In that case we write $h = g^*$.
A subcategory $H$ of $G$ is called an \emph{inverse subcategory}
if the restriction of the map $*$ on $G_1$
to $H_1$ coincides with the map $*$ on $H_1$.
Note that if $G$ is a groupoid, then $G$ is an inverse
category if we for each $g \in G_1$ put $g^* = g^{-1}$.
It turns out that,  for our purposes, the usual notion of functor
is too restrictive when considered for inverse categories.
Therefore we make the following weakening.

\begin{defi}\label{defpartialfunctor}
Suppose that $G$ and $H$ are inverse categories.
A \emph{partial functor of inverse categories
$F : G \rightarrow H$} is a pair of maps $(F_0,F_1)$,
where $F_0 : G_0 \rightarrow H_0$ and $F_1 : G_1 \rightarrow H_1$,
that satisfy the following axioms.
\begin{itemize}

\item[(I1)] If $g : a \rightarrow b$ in $G_1$, then $F_1(g) : F_0(a) \rightarrow F_0(b)$.

\item[(I2)] If $a \in G_0$, then $F_1( {\rm id}_a ) = {\rm id}_{F_0(a)}$.

\item[(I3)] If $(g,h) \in G_2$, then 
\begin{displaymath}
	F_1(g) F_1(h) = F_1(g) F_1(g^*) F_1(gh) = F_1(gh) F_1(h^*) F_1(h).
\end{displaymath}

\end{itemize}
By abuse of notation, we will write $F$ for both $F_0$ and $F_1$ in the sequel.
\end{defi}

\begin{rem}
Note that if we replace axiom (I3) in Definition~\ref{defpartialfunctor} by 
\begin{itemize}

\item If $(g,h) \in G_2$, then $F_1(g) F_1(h) = F_1(gh)$.

\end{itemize}
then $F$ is an ordinary functor.
\end{rem}

\begin{prop}\label{ordinaryfunctorpartial}
If $G$ and $H$ are inverse categories and 
$F : G \rightarrow H$ is an ordinary functor,
then $F$ is a partial functor of inverse categories.
\end{prop}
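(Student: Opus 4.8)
The plan is to verify the three axioms (I1), (I2), (I3) for an ordinary functor $F : G \to H$, where $G$ and $H$ are inverse categories. Axiom (I1) is immediate: an ordinary functor by definition sends a morphism $g : a \to b$ to a morphism $F(g) : F(a) \to F(b)$, so $F_0$ and $F_1$ satisfy the domain/codomain compatibility. Axiom (I2) is likewise built into the definition of an ordinary functor: $F(\mathrm{id}_a) = \mathrm{id}_{F(a)}$ for every object $a$.

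The only axiom requiring an argument is (I3). Here I would first record the key observation that an ordinary functor between inverse categories automatically preserves the involution, i.e. $F(g^*) = F(g)^*$ for all $g \in G_1$. The reason is that $g^*$ is the unique morphism $h$ with $(g,h) \in G_2$, $(h,g) \in G_2$, $ghg = g$ and $hgh = h$; applying the functor $F$ to the equalities $g g^* g = g$ and $g^* g g^* = g^*$ (and using functoriality together with the fact that $F$ respects composability, since $(g,g^*), (g^*,g) \in G_2$ force the relevant domains and codomains to match up in $H$) shows that $F(g^*)$ satisfies the defining property of $F(g)^*$ in $H$; uniqueness then gives $F(g^*) = F(g)^*$. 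With this in hand, for any $(g,h) \in G_2$ we compute in $H$ using functoriality: $F(g)F(g^*)F(gh) = F(g g^* g h) = F(gh)$ because $g g^* g = g$, and similarly $F(gh)F(h^*)F(h) = F(gh h^* h) = F(gh)$ because $h h^* h = h$; finally $F(g)F(h) = F(gh)$ by functoriality. Hence all three expressions in (I3) equal $F(gh)$, so (I3) holds.

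The main (and essentially only) obstacle is the preservation of the involution, $F(g^*) = F(g)^*$, since this is not part of the definition of an ordinary functor and must be derived from the uniqueness clause in the definition of an inverse category; everything else is a direct unwinding of definitions. I would therefore state and prove this as the crux of the argument, taking care to check that the composability hypotheses $(g,g^*) \in G_2$ and $(g^*,g) \in G_2$ are transported correctly under $F$ so that the products $F(g)F(g^*)$ and $F(g^*)F(g)$ are legitimately defined in $H$.
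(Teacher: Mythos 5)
Your proof is correct and, at its core, takes the same route as the paper: writing $gh = g g^* g h$ and $gh = g h h^* h$ and applying functoriality shows that $F(g)F(h)$, $F(g)F(g^*)F(gh)$ and $F(gh)F(h^*)F(h)$ all equal $F(gh)$. The lemma $F(g^*)=F(g)^*$ that you present as the crux is in fact never needed, since axiom (I3) involves $F(g^*)$ (the image of $g^*$ under $F$) rather than the involution $F(g)^*$ computed in $H$; your derivation of it is correct (it is essentially Proposition~\ref{propstar}), but it is superfluous for this statement.
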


\begin{proof}
Take $(g,h) \in G_2$. Then, since $F$ is an ordinary functor, we get that
$F(g) F(h) = 
F(gh) =
F(g g^* g h) =
F(g) F(g^*) F(gh)$ 
and
$F(g) F(h) = 
F(gh) =
F(g h h^* h) =
F(gh) F(h^*) F(h)$.
\end{proof}

\begin{prop}\label{propstar}
If $F : G \rightarrow H$ is a partial functor of 
inverse categories, then for every $g \in G_1$
the relation $F(g^*) = F(g)^*$ holds.
\end{prop}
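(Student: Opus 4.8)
The statement to prove is $F(g^*) = F(g)^*$ for every $g \in G_1$. The plan is to use the uniqueness clause in the definition of an inverse category: in $H$, the element $F(g)^*$ is the *unique* morphism $k$ satisfying $(F(g),k), (k,F(g)) \in H_2$, $F(g)\,k\,F(g) = F(g)$ and $k\,F(g)\,k = k$. So it suffices to show that $k := F(g^*)$ meets all four requirements.

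First I would check composability. Since $g \in G_1$ has $g^* \in G_1$ with $(g,g^*),(g^*,g) \in G_2$, axiom (I1) gives $F(g) : F(d(g)) \to F(c(g))$ and $F(g^*) : F(c(g)) \to F(d(g))$, so both $(F(g),F(g^*))$ and $(F(g^*),F(g))$ lie in $H_2$. Next I would establish the two "inverse-like" identities. Apply axiom (I3) to the composable pair $(g, g^*) \in G_2$: this yields
\[
F(g) F(g^*) = F(g) F(g^*) F(g g^*) = F(g g^*) F(g^{**}) F(g^*).
\]
Here I need $g^{**} = g$, which holds because $g$ itself witnesses the defining property of $(g^*)^*$ (namely $g^* g \, g^* = g^*$ and $g\, g^* g = g$), so by uniqueness $g^{**}=g$; also $g g^*$ is an idempotent with $g g^* = c(g)$ is *not* generally true in an inverse category — rather $gg^*$ is merely an idempotent endomorphism of $c(g)$ — so I must be careful not to assume $gg^* = \mathrm{id}$. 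Instead, I would feed the two composable pairs $(g,g^*g g^*) $ etc., or more simply compute $F(g)F(g^*)F(g)$ directly: using (I3) on $(g,g^*)$ and then on $(gg^*, g)$ (note $d(gg^*) = d(g) = c(g^* )$... ) I rewrite $F(g)F(g^*)F(g)$ via $g g^* g = g$ to collapse it to $F(g)$, and symmetrically $F(g^*)F(g)F(g^*) = F(g^*)$ via $g^* g g^* = g^*$. The cleanest route is: since $(g,g^*) \in G_2$ and $g g^* g = g$, apply (I3) to the pair $(gg^*, g)$ — but one should double-check this pair is composable and that $gg^* \in G_1$ is a morphism, which it is as a composite. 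Then $F(gg^*)F(g) = F(gg^*)F((gg^*)^*)F(gg^*g) = F(gg^*)F(gg^*)F(g)$, and combined with $F(g)F(g^*) = F(gg^*)\cdot(\text{stuff})$ from the first application, the products telescope to give $F(g)F(g^*)F(g) = F(g)$. An analogous argument with $g^*$ in place of $g$ gives $F(g^*)F(g)F(g^*) = F(g^*)$.

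Having verified that $F(g^*)$ satisfies composability and both equations $F(g)F(g^*)F(g) = F(g)$ and $F(g^*)F(g)F(g^*) = F(g^*)$, uniqueness of the generalized inverse in the inverse category $H$ forces $F(g^*) = F(g)^*$, completing the proof. The main obstacle I anticipate is the bookkeeping in the previous step: one must resist the temptation to use group-like cancellation (e.g. $gg^* = \mathrm{id}$ or $F(g)F(g^*) = \mathrm{id}$), and instead carefully select which composable pairs to plug into axiom (I3) so that the $F$-images of the idempotents $gg^*$ and $g^*g$ cancel against one another using only the relations $gg^*g = g$, $g^*gg^* = g^*$, and $g^{**} = g$. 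Once the right pairs are chosen the computation is short and purely formal.
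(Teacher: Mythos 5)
Your overall strategy---verify that $F(g^*)$ satisfies $F(g)F(g^*)F(g)=F(g)$ and $F(g^*)F(g)F(g^*)=F(g^*)$ together with composability, then invoke the uniqueness of the generalized inverse in $H$---is exactly the paper's, and your auxiliary observations ($g^{**}=g$, $(gg^*)^*=gg^*$, the warning that $gg^*$ need not be an identity) are correct. The genuine gap is in how you try to obtain the first identity. Every application of (I3) you propose involves only pairs built from $g$, $g^*$ and the idempotents $gg^*$, $g^*g$; axiom (I2) never enters. But $F(g)F(g^*)F(g)=F(g)$ is \emph{not} a formal consequence of (I1) and (I3) alone, so no telescoping of those relations can close the argument. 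Concretely, (I3) on $(g,g^*)$ gives $F(g)F(g^*)=F(g)F(g^*)F(gg^*)=F(gg^*)F(g)F(g^*)$, and (I3) on $(gg^*,g)$ gives $F(gg^*)F(g)=F(gg^*)^2F(g)=F(g)F(g^*)F(g)$; combining these only yields $F(g)F(g^*)F(g)=F(gg^*)F(g)$, never $F(g)$ itself. To see that (I2) is indispensable, let $G$ be the trivial groupoid with one object and one identity morphism, let $H$ be the symmetric inverse monoid on $\{1,2\}$ viewed as a one-object inverse category, and set $F_1(\mathrm{id})=x$, the partial bijection $1\mapsto 2$. Then (I1) holds, and (I3) holds because $x^2=x^3=0$, yet $F(g)F(g^*)F(g)=x^3=0\neq x=F(g)$ and $F(g^*)=x\neq x^*=F(g)^*$; so any proof restricted to your chosen instances of (I3) must fail.

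The repair is the paper's one-line trick: apply (I3) to the pair $(g,\mathrm{id}_{d(g)})$, which gives $F(g)F(\mathrm{id}_{d(g)})=F(g)F(g^*)F(g)$, and then use (I2) to replace $F(\mathrm{id}_{d(g)})$ by $\mathrm{id}_{F(d(g))}$, yielding $F(g)=F(g)F(g^*)F(g)$ directly; substituting $g^*$ for $g$ and using $g^{**}=g$ gives $F(g^*)=F(g^*)F(g)F(g^*)$, after which your composability check and the uniqueness argument finish the proof exactly as you intended.
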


\begin{proof}
Take a morphism $g : a \rightarrow b$ in $G_1$.
If we put $h = {\rm id}_a$ in (I3), then we get that
$F(g)F( {\rm id}_a ) = F(g) F(g^*) F(g)$.
From (I2) it follows that this relation can be written as
\begin{equation}\label{gggg}
F(g) = F(g) F(g^*) F(g).
\end{equation}
By replacing $g$ by $g^*$ in \eqref{gggg}, we get that
\begin{equation}\label{ggggg}
F(g^*) = F(g^*) F(g) F(g^*).
\end{equation}
Equations \eqref{gggg} and \eqref{ggggg} show that 
$F(g^*)$ satisfy the axioms for $F(g)^*$.
Since $F(g)^*$ is uniquely defined we thus get that $F(g^*) = F(g)^*$.
\end{proof}

\begin{lem}\label{lemmagh}
Suppose that $F : G \rightarrow H$ is a partial functor of inverse categories.
If $(g,h) \in G_2$ are chosen so that $gh = h$  $\left(gh = g\right),$
then $F(g)F(h) = F(h)$ $\left(F(g)F(h) = F(g)\right).$
\end{lem}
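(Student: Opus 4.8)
The plan is to exploit the defining axiom (I3) together with the characterization of $F(g^*)$ as $F(g)^*$ obtained in Proposition~\ref{propstar}. I will treat the two cases symmetrically, so let me describe the case $gh = h$; the case $gh = g$ is handled by the dual manipulation using the third expression in (I3) instead of the second.

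First I would write down axiom (I3) for the pair $(g,h) \in G_2$, which gives
$F(g)F(h) = F(gh)F(h^*)F(h)$. Substituting the hypothesis $gh = h$ into the right-hand side yields $F(g)F(h) = F(h)F(h^*)F(h)$. Now I would invoke equation~\eqref{gggg} from the proof of Proposition~\ref{propstar} (applied with $h$ in place of $g$), which states precisely that $F(h) = F(h)F(h^*)F(h)$. Combining these two identities gives $F(g)F(h) = F(h)$, as desired. For the parenthetical case $gh = g$, I would instead use the middle expression in (I3), namely $F(g)F(h) = F(g)F(g^*)F(gh)$, substitute $gh = g$ to get $F(g)F(h) = F(g)F(g^*)F(g)$, and again apply~\eqref{gggg} (this time directly for $g$) to conclude $F(g)F(h) = F(g)$.

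I do not anticipate a serious obstacle here: the statement is essentially a two-line consequence of (I3) and the idempotent-type relation~\eqref{gggg}. The only point requiring a small amount of care is making sure the composability hypothesis $(g,h)\in G_2$ is genuinely what is needed to invoke (I3) — it is, since (I3) is stated exactly for composable pairs — and that the substitution $gh = h$ (resp. $gh = g$) is legitimate, i.e. that $gh$ denotes an actual morphism of $H$ after applying $F_1$, which follows from (I1). Thus the proof is a direct verification, and I would present it in just a few lines mirroring the structure of Proposition~\ref{propstar}'s proof.
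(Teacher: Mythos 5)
Your proof is correct and takes essentially the same route as the paper: apply (I3) to the composable pair, substitute $gh=h$ (resp. $gh=g$), and conclude from the relation $F(h)=F(h)F(h^*)F(h)$. The only cosmetic difference is that you cite equation~\eqref{gggg} directly, whereas the paper invokes Proposition~\ref{propstar} together with the inverse-category axiom $F(h)F(h)^*F(h)=F(h)$ in $H$; these amount to the same step.
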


\begin{proof}
Suppose that $(g,h) \in G_2$ satisfy $gh = h$. From (I3)
and Proposition~\ref{propstar}, we get that
$F(g) F(h) = F(gh) F(h^*) F(h) = F(h) F(h)^* F(h) = F(h).$
Now suppose that $(g,h) \in G_2$ satisfy $gh = g$. From (I3)
and Proposition~\ref{propstar}, we get that
$F(g) F(h) = F(g) F(g^*) F(gh) = F(g) F(g)^* F(g) = F(g).$
\end{proof}

\begin{prop}\label{composition}
If $F : G \rightarrow G'$ and $F' : G' \rightarrow G''$
are partial functors of inverse categories, then
$F' \circ F : G \rightarrow G''$ is a partial functor of inverse categories.
\end{prop}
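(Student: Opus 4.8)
The plan is to verify the three axioms (I1), (I2), (I3) for the pair of maps $(F'_0 \circ F_0, F'_1 \circ F_1)$. Axioms (I1) and (I2) are immediate: if $g : a \to b$ in $G_1$, then $F(g) : F(a) \to F(b)$ in $G'_1$ by (I1) for $F$, and hence $F'(F(g)) : F'(F(a)) \to F'(F(b))$ by (I1) for $F'$; similarly $F'(F({\rm id}_a)) = F'({\rm id}_{F(a)}) = {\rm id}_{F'(F(a))}$ using (I2) twice. So the work is entirely in checking (I3). I would begin by recording the observation that, by Proposition~\ref{propstar}, $F(g^*) = F(g)^*$ and $F'(g'^*) = F'(g')^*$, so that the composite $F' \circ F$ automatically respects the involution; this will let me freely rewrite $(F' \circ F)(g^*)$ as $((F' \circ F)(g))^*$ whenever convenient.

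For (I3), fix $(g,h) \in G_2$. Applying (I3) for $F$ gives $F(g)F(h) = F(g)F(g^*)F(gh)$ in $G'_1$. Now I want to apply $F'$ to this three-term product, but $F'$ is only a partial functor, so I cannot simply distribute it over the product. Instead I would proceed factor by factor, exploiting composability: note that $(F(g), F(g^*)) \in G'_2$ and $(F(g)F(g^*), F(gh)) \in G'_2$ since $F$ respects domains and codomains. The key point is that $F(g)F(g^*)$ is an idempotent endomorphism of the object $F(c(g))$ in the inverse category $G'$ (indeed $F(g)F(g^*)F(g)F(g^*) = F(g)F(g^*)$ by the inverse-category axiom applied to $F(g)$). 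I would then use Lemma~\ref{lemmagh}: since $e := F(g)F(g^*)$ satisfies $e \cdot F(gh) $ is a product in which pre-composing $F(gh)$ by the idempotent $e$ — and here one checks $e \cdot (F(g)F(h)) = F(g)F(h)$ and the relevant partial-composability relations — allows one to feed the pair into Lemma~\ref{lemmagh} and collapse the $F'$-image of the product.

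More concretely, the cleanest route is: from (I3) for $F$ we get the two relations $F(g)F(h) = F(g)F(g^*)F(gh)$ and $F(g)F(h) = F(gh)F(h^*)F(h)$ in $G'_1$; apply (I3) for $F'$ to the composable pair $(F(g), F(h))$ to obtain $F'(F(g))F'(F(h)) = F'(F(g))F'(F(g)^*)F'(F(g)F(h))$, and then substitute $F(g)F(h) = F(g)F(h)$ — wait, one needs $F'(F(gh))$, so I would instead feed the pair $(F(g), F(h))$ through (I3) for $F'$, getting $F'(F(g))F'(F(h)) = F'(F(g))F'(F(g))^*F'(F(g)F(h))$, and then rewrite $F(g)F(h)$ inside the last factor using (I3) for $F$ together with Lemma~\ref{lemmagh} applied in $G'$ to absorb the idempotent $F(g)F(g^*)$, yielding $F'(F(g)F(h)) = F'(F(gh))$ composed appropriately. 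By symmetry the other equality in (I3) follows the same way using the second form $F(gh)F(h^*)F(h)$. Thus $(F' \circ F)(g)(F' \circ F)(h) = (F' \circ F)(g)(F' \circ F)(g^*)(F' \circ F)(gh)$ and likewise with $h$, which is (I3) for $F' \circ F$.

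The main obstacle I anticipate is precisely the bookkeeping in the previous paragraph: because neither $F$ nor $F'$ distributes over products, one must at each stage isolate a \emph{composable pair} or a \emph{single morphism that is a genuine composite in the source category} before applying an axiom, and repeatedly invoke Lemma~\ref{lemmagh} to swallow the idempotents $F(g)F(g^*)$ (which are the $F$-images of $gg^* = c(g)$ up to the partiality). Getting the associativity groupings right so that every intermediate product is legitimately of the form handled by (I3) or Lemma~\ref{lemmagh} — rather than an illegitimate three-fold application of $F'$ — is the only subtle part; once the groupings are fixed, each individual step is a one-line rewrite.
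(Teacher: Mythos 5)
Your proposal is correct and takes essentially the same route as the paper's proof: (I1) and (I2) are immediate, and (I3) for $F'\circ F$ is deduced from (I3) for $F$ and for $F'$ together with Proposition~\ref{propstar} and repeated applications of Lemma~\ref{lemmagh} to absorb the idempotents $F(g)F(g)^*$ (resp.\ $F(h)^*F(h)$), which is exactly the paper's argument. The only point to tighten is the loosely worded claim that $F'(F(g)F(h))$ equals $F'(F(gh))$ ``composed appropriately'': what the absorption steps actually yield (and what the paper's displayed chain proves) is the equality $F'(F(g))\,F'(F(g)^*)\,F'(F(g)F(h)) = F'(F(g))\,F'(F(g)^*)\,F'(F(gh))$, the two outer factors being indispensable.
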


\begin{proof}
Put $F'' = F' \circ F$.
It is easy to see that (I1) and (I2) hold for $F''$.
Now we show (I3). To this end, take $(g,h) \in G_2$. 
We wish to show that
\begin{equation}\label{I4}
F''(g) F''(h) = F''(g) F''(g^*) F''(gh)
\end{equation}
and
\begin{equation}\label{I5}
F''(g) F''(h) = F''(gh) F(h^*) F(h).
\end{equation}
First we show \eqref{I4}.
Using (I3) for $F'$ and $F$, we get that the left side of \eqref{I4} equals
\begin{align*}
F'( F(g) ) F'( F(h) ) &= 
F'( F(g) ) F'( F(g)^* ) F'( F(g) F(h) )\\
&=  F'( F(g) ) F'( F(g)^* ) F'( F(g) F(g)^* F(gh) )
\end{align*}
Using Lemma~\ref{lemmagh} for  the right side of \eqref{I4} we get 
\begin{equation*}\label{gggh}
F'( F(g) ) F'( F(g)^* ) F'( F(gh) )=F'( F(g) ) [F'( F(g)^* ) F'( F(g)F(g)^* )] F'( F(gh) ).
\end{equation*}
Using (I3) and Lemma~\ref{lemmagh}, the last expression equals
\begin{align*}
F'( F(g) ) F'( F(g)^* ) [F'( F(g)F(g)^* ) F'( F(g)F(g)^* ) F'(  F(gh) )] \\
=  F'( F(g) ) F'( F(g)^* ) F'( F(g) F(g)^* F(gh) )
\end{align*}
which shows \eqref{I4}. Now we show \eqref{I5}.\\
Using (I3) for $F'$ and $F$, we get that the left side of \eqref{I5} equals
\begin{align*}F'( F(g) ) F'( F(h) )
& = 
F'( F(g)F(h) ) F'( F(h)^* ) F'( F(h) ) 
\\&= F'( F(gh) F(h^*) F(h) ) F'( F(h)^* ) F'( F(h) ).\end{align*}
The right side of \eqref{I5} equals
\begin{equation}\label{ghhh}
F'( F(gh) ) F'( F(h^*) ) F'( F(h) ).
\end{equation}
Using Lemma~\ref{lemmagh}, we get that \eqref{ghhh} equals
\begin{displaymath}
	F'( F(gh) ) F'( F(h^*) F(h) ) F'( F(h^*) ) F'( F(h) ).
\end{displaymath}
Using (I3) and Lemma~\ref{lemmagh}, the last expression equals
\begin{displaymath}
	F'( F(gh) F(h^*) F(h) ) F'( F(h^*) F(h) ) F'( F(h^*) F(h) ) F'( F(h^*) ) F'( F(h) )
\end{displaymath}
which equals 
$F'( F(gh) F(h^*) F(h) ) F'( F(h^*) ) F'( F(h) )$
showing \eqref{I5}.
\end{proof}

\begin{prop}
If $F : G \rightarrow H$ is a partial functor of inverse categories,
where $H$ is a groupoid, then $F$ is an ordinary functor.
\end{prop}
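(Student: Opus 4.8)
The plan is to verify that $F$ satisfies the one axiom, beyond (I1) and (I2), needed to be an ordinary functor (in the sense of the Remark following Definition~\ref{defpartialfunctor}): namely that $F(g)F(h) = F(gh)$ for every $(g,h) \in G_2$. Since $F$ is already a partial functor of inverse categories, (I1) and (I2) hold automatically, and by Proposition~\ref{propstar} we have $F(g^*) = F(g)^*$ for every $g \in G_1$. The crucial point is that in the groupoid $H$, regarded as an inverse category via $x^* = x^{-1}$, the element $F(g)F(g)^*$ is an identity morphism.

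First I would fix $(g,h) \in G_2$, write $g : a \to b$ in $G_1$ (so that $a = d(g)$ and $b = c(g) = c(gh)$), and invoke (I3) to get $F(g)F(h) = F(g)F(g^*)F(gh)$. Using Proposition~\ref{propstar} and the fact that $*$ coincides with $(\cdot)^{-1}$ in the groupoid $H$, one has $F(g)F(g^*) = F(g)F(g)^{-1} = {\rm id}_{F(b)}$. Since, by (I1), the morphism $F(gh)$ has codomain $F(c(gh)) = F(b)$, it follows that ${\rm id}_{F(b)} F(gh) = F(gh)$, whence $F(g)F(h) = F(gh)$, as desired.

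There is essentially no obstacle here; the argument is a one-line consequence of Proposition~\ref{propstar} together with the defining property of groupoids. If one prefers to avoid Proposition~\ref{propstar}, one can instead observe that $F(g)F(g^*)$ is idempotent --- this follows by combining equations \eqref{gggg} and \eqref{ggggg} --- and that every idempotent in a groupoid is an identity morphism (multiply $e = e^2$ on the left by $e^{-1}$); a check of domains and codomains then identifies this idempotent as ${\rm id}_{F(b)}$, and the conclusion follows exactly as above.
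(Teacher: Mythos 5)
Your argument is correct and is essentially the paper's own proof: apply (I3), use Proposition~\ref{propstar} together with the fact that $*$ is $(\cdot)^{-1}$ in the groupoid $H$ to see that $F(g)F(g^*)$ is the identity at $F(c(g)) = F(c(gh))$, and conclude $F(g)F(h) = F(gh)$. The extra domain/codomain bookkeeping and the alternative idempotent remark are fine but not needed beyond what the paper already does.
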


\begin{proof}
Take $(g,h) \in G_2$. From Proposition~\ref{propstar}, it follows that
$F(g) F(h) = F(g) F(g^*) F(gh) = F(g) F(g)^* F(gh) = F(g) F(g)^{-1} F(gh) = F(gh)$.
\end{proof}

\begin{defi}
Let BIJ denote the groupoid having 
the collection of all sets as objects and
bijections between sets as morphisms.
\end{defi}

\begin{defi}
Let $A$ and $B$ be sets.
By a \emph{partial bijection from $B$ to $A$} we mean
a choice of subsets $Y \subseteq B$ and $X \subseteq A$
and a bijection $f : Y \rightarrow X$.
We will indicate this by writing ${}_A^X f_B^Y$.
We will now define, what we call, \emph{the category of partial bijections} BIJ$_{cat}$.
The class of objects in BIJ$_{cat}$ consists of all sets. 
The class of morphisms in BIJ$_{cat}$ consists of all
partial bijections ${}_A^X f_B^Y$. 
The domain and codomain of such a morphism is $B$ and $A$, respectively.
The identity morphism at $A$ is defined to be
${}_A^A ( {\rm id}_A )_A^A$.
The empty partial bijection from $B$ to $A$ is denoted by $_{A} 0_B$.
Suppose that ${}_A^X f _B^Y $ and ${}_C^{X'} g _D^{Y'}$
are morphisms in BIJ$_{cat}$.
If $B=C$, then the composition of these morphisms is defined to be
\begin{displaymath}
	{}_A^{ f(Y \cap X') } f|_{Y \cap X'} \circ {g| _{ g^{-1}(Y \cap X')} }_D^ {g^{-1}(Y \cap X')} .
\end{displaymath}
Otherwise, the composition is defined to be $_{A}0_D$.
We also define a map $* : ({\rm BIJ}_{cat})_1 \rightarrow ({\rm BIJ}_{cat})_1$ by 
$( {}_A^X {f}_B^Y )^* = {}_B^Y {f^{-1}}_A^X $.
\end{defi}

\begin{prop}\label{BIJCAT}
{\rm BIJ}$_{cat}$ is an inverse category.
\end{prop}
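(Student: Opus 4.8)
The plan is to verify the two defining conditions of an inverse category for BIJ$_{cat}$: first that it is a category, and then that the map $*$ witnesses the uniqueness-of-generalized-inverse condition. For the category axioms, I would start by checking that composition is well defined on the nose: given ${}_A^X f_B^Y$ and ${}_B^{X'} g_D^{Y'}$, the set $g^{-1}(Y \cap X')$ is a subset of $Y' \subseteq D$, the set $f(Y \cap X')$ is a subset of $X \subseteq A$, and the restriction $f|_{Y\cap X'} \circ g|_{g^{-1}(Y\cap X')}$ is genuinely a bijection between these two subsets (it is a composite of bijections onto their images), so the result is a legitimate morphism $D \to A$. Associativity I would handle by unwinding both triple composites $(h \circ g) \circ f$ and $h \circ (g \circ f)$ and checking they agree; the key point is that in both cases the resulting bijection is the honest set-theoretic composite of the three partial bijections restricted to the common domain of definition $\{x \mid x, f(x), g(f(x)) \text{ are all defined and land in the right subsets}\}$, and a short set-chase shows this domain is symmetric in how the parentheses are placed (it equals $f^{-1}(Y \cap X' \cap g^{-1}(Y'' \cap X''))$ computed either way). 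The identity laws are immediate: composing ${}_A^X f_B^Y$ with ${}_B^B(\mathrm{id}_B)_B^B$ on the right restricts $f$ to $f^{-1}(Y \cap B) = Y$, giving back $f$, and similarly on the left.

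Next I would establish that $*$ gives generalized inverses. For ${}_A^X f_B^Y$, write $g = ({}_A^X f_B^Y)^* = {}_B^Y (f^{-1})_A^X$. Computing $f \circ g \circ f$: first $g \circ f$ has domain $f^{-1}(Y \cap X) = Y$ and equals $f^{-1} \circ f = \mathrm{id}_Y$ viewed as ${}_B^Y(\mathrm{id}_Y)_B^Y$; then composing with $f$ on the left restricts $f$ to $Y$, recovering ${}_A^X f_B^Y$. Symmetrically $g \circ f \circ g = g$. So $f^*$ as defined does satisfy the two equations $f f^* f = f$ and $f^* f f^* = f^*$. The remaining, and I expect the only slightly delicate, part is uniqueness: if ${}_A^X f_B^Y$ has another generalized inverse ${}_B^{Y'}h_A^{X'}$ with $f h f = f$ and $h f h = h$, I must show $h = {}_B^Y(f^{-1})_A^X$. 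The standard inverse-semigroup argument applies: the idempotents $fh$ and $hf$ are partial identities (restrictions of $\mathrm{id}$), these commute because idempotents in BIJ$_{cat}$ commute — an idempotent ${}_A^X e_A^Y$ forced by $e^2 = e$ to have $X = Y$ and $e = \mathrm{id}_X$, and two such partial identities $\mathrm{id}_X, \mathrm{id}_{X'}$ compose to $\mathrm{id}_{X \cap X'}$ either way — and from commuting idempotents the generalized inverse is unique by the usual manipulation $h = h f h = h (f f^* f) h = (hf)(f^*)(fh) = (f^*)(fh)(hf) \cdots$, eventually collapsing to $f^*$.

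The main obstacle I anticipate is purely bookkeeping: the formula for composition in BIJ$_{cat}$ carries four sub/superscript decorations per morphism, and verifying associativity means tracking how the domains and codomains of the restrictions interact under two different bracketings. The conceptual content is nil — it is exactly the inverse semigroup of partial bijections of a single set, now made "many-object" by recording source and target sets and declaring composites empty when the sets do not match — so once the empty-partial-bijection cases are dispatched (composition with $_A0_B$ on either side yields an empty partial bijection, consistently), the whole proof reduces to careful but routine set theory, and I would present it by reducing to the commutation of idempotents and citing the standard inverse-semigroup uniqueness argument rather than grinding through the symbol-chase in full.
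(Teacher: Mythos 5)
Your proposal is correct, and on everything except the uniqueness step it follows essentially the paper's own route: the identity laws, associativity by unwinding both bracketings of a triple composite and checking that their domains and codomains agree (the paper's verification that $X_1=X_2$ and $Y_1=Y_2$), and the computation of $\alpha\alpha^*$ and $\alpha^*\alpha$ as partial identities, which yields $\alpha\alpha^*\alpha=\alpha$ and $\alpha^*\alpha\alpha^*=\alpha^*$. Where you genuinely diverge is the uniqueness of the generalized inverse. The paper argues by direct computation: from $\alpha\beta\alpha=\alpha$ it writes $\alpha^*=\alpha^*\alpha\,\beta\,\alpha\alpha^*$, evaluates the right-hand side explicitly as a restriction of $\beta$ using the composition formula, reads off the inclusions $Y\subseteq X'$ and $X\subseteq Y'$ from the sub- and superscripts, gets the reverse inclusions symmetrically from $\beta\alpha\beta=\beta$, and concludes $\beta=\alpha^*$. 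You instead classify the idempotents of ${\rm BIJ}_{cat}$ (correctly: $e^2=e$ forces ${}_A^X e_A^Y$ to satisfy $X=Y$ and $e=\mathrm{id}_X$), observe that such partial identities commute, and invoke the classical inverse-semigroup uniqueness argument; this transfer is legitimate because every product occurring in that argument lies either in $\mathrm{End}(A)$, in $\mathrm{End}(B)$, or in the hom-set between them, so the one-object proof goes through verbatim. Your route costs an extra (easy) lemma on idempotents but is more conceptual and isolates the real reason uniqueness holds; the paper's route needs no statement about idempotents but is pure bookkeeping with the decorations.

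One small repair to your sketch: the displayed chain $(hf)(f^*)(fh)=(f^*)(fh)(hf)$ is not well formed, since the right-hand side hides a product of endomorphisms of two different objects (effectively an $hh$), which is not composable unless $A=B$. The correct collapse is, for instance, $h=h(ff^*f)h=(hf)(f^*f)h=(f^*f)(hf)h=f^*fh$ and $f^*=f^*(fhf)f^*=f^*(fh)(ff^*)=f^*(ff^*)(fh)=f^*fh$, whence $h=f^*$. This is a slip of notation, not a gap in the method.
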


\begin{proof}
Suppose that $\alpha =  {}_A^{X} f_B^Y $, 
$\beta = {}_B^{X'} g_C^{Y'} $ and $\gamma =  {}_C^{X''} h_D^{Y''} $
are morphisms in BIJ$_{cat}$. First we check the axioms for identity elements:
\begin{displaymath}
	{}_A^A ( {\rm id}_A )_A^A  \alpha = 
 {}_A^{ {\rm id}_A(A \cap X) } ( {\rm id}_A |_{A \cap X} \circ f|_{f^{-1}(A \cap X)} )_B^{f^{-1}(A \cap X)}  = 
 {}_A^X ( {\rm id}_X \circ f )_B^Y  = \alpha
\end{displaymath}
and
\begin{displaymath}
	\alpha  {}_B^B ( {\rm id}_B )_B^B  = 
 {}_A^{f(Y \cap B)} ( f|_{Y \cap B} \circ {\rm id}_B|_{ {\rm id}_B^{-1} 
(Y \cap B) } )_B^{ {\rm id}_B^{-1} (Y \cap B) }  =
 {}_B^Y ( f \circ {\rm id}_Y )_A^X  = \alpha.
\end{displaymath}
Now we prove associativity. We get that
\begin{displaymath}
	( \alpha \beta ) \gamma = 
 {}_A^{X_1}
( f|_{Y \cap X'} \circ g|_{g^{-1}( Y \cap X' )} )|_{ g^{-1}(Y \cap X') \cap X'' } 
\circ h|_{{Y_1}_{C}^{Y_1} },
\end{displaymath}
where 
\begin{displaymath}
	X_1 = ( f|_{Y \cap X'} \circ g|_{g^{-1}( Y \cap X' )} ) ( g^{-1}(Y \cap X') \cap X''),
\end{displaymath}
and
\begin{displaymath}
	Y_1 = h^{-1}( g^{-1}(Y \cap X') \cap X'' ).
\end{displaymath}
We also get that
\begin{displaymath}
	\alpha ( \beta \gamma) =
 {}_A^{ X_2 }  f|_{ Y \cap g(Y' \cap X'') } \circ
{ ( g|_{Y' \cap X''} \circ h|_{ h^{-1}(Y' \cap X'') } )|_{Y_2}}_D^{Y_2}
\end{displaymath}
where 
\begin{displaymath}
	X_2 = f( Y \cap g(Y' \cap X'') )
\end{displaymath}
and
\begin{displaymath}
	Y_2 = ( g|_{Y' \cap X''} \circ h|_{ h^{-1}(Y' \cap X'') } )^{-1} (Y \cap g(Y' \cap X'')).
\end{displaymath}
Since composition of functions is associative, 
we only need to show that $X_1 = X_2$ and $Y_1 = Y_2$.

First we show that $X_1 \subseteq X_2$. 
Take $a \in X_1$. 
Then $a = f( g ( b ) )$ for some $b \in X''$ such that $g(b) \in Y \cap X'$.
Since $g : Y' \rightarrow X'$ we also get that $b \in Y'$.
Thus $g(b) \in Y \cap g(Y' \cap X'')$ and hence $a = f ( g(b) ) \in X_2$.
Next we show that $X_2 \subseteq X_1$.
Take $c \in X_2$.
Then $c = f ( g ( d ) ),$ for some $d \in Y' \cap X''$ such that $g(d) \in Y$.
Since $g : Y' \rightarrow X'$ we get that $g(d) \in Y \cap X'$.
Thus $d \in g^{-1}( Y \cap X') \cap X''$ and hence $c \in X_1$.

Now we show that $Y_1 \subseteq Y_2$.
Take $a \in Y_1$.
Then $h(a) \in g^{-1}(Y \cap X') \cap X'' \subseteq Y' \cap X''$.
Thus $g(h(a)) \in g( Y' \cap X'' )$.
Also $g(h(a)) \in g ( g^{-1} (Y \cap X' ) ) \subseteq Y$.
Hence $g(h(a)) \in Y \cap g( Y' \cap X'' )$.
So we get that $a \in Y_2$.
Next we show that $Y_2 \subseteq Y_1$:
\begin{align*}Y_2 &= ( h^{-1} |_{h^{-1}(Y' \cap X'')}\circ g|_{Y' \cap X''}^{-1} ) (Y \cap g(Y' \cap X''))\\&
= h^{-1} |_{h^{-1}(Y' \cap X'')}
(  g^{-1}(Y \cap g(Y' \cap X'')) \cap Y' \cap X'' )
\\&= h^{-1}( g^{-1}(Y \cap g(Y' \cap X'')) \cap Y' \cap X'' ) \cap h^{-1}(Y' \cap X'').\end{align*}
Since $g(Y' \cap X'') \subseteq X'$ and $Y' \cap X'' \subseteq X''$ we thus get that
$Y_2 \subseteq Y_1$.

Finally, we show that BIJ$_{cat}$ is an inverse category.
To this end, first note that
\begin{equation}\label{firstset}
\alpha \alpha^* = 
{}_A^X f_B^Y  ( {}_A^X f_B^Y )^* =
{ }_A^X f_B^Y   {}_B^Y {f^{-1}}_A^X  =
 {}_A^X {{\rm id}_X}_A^X 
\end{equation}
and
\begin{equation}\label{secondset}
\alpha^* \alpha = 
( {}_A^X f_B^Y )^* {}_A^X f_B^Y  =
{}_B^Y {f^{-1}}_A^X  {}_A^X f_B^Y  =
 {}_B^Y {{\rm id}_Y}_B^Y .
\end{equation}
Thus, it follows that
\begin{displaymath}
	\alpha \alpha^* \alpha = 
 {}_A^X {{\rm id}_X}_A^X {}_A^X f_B^Y  =
 {}_A^X f_B^Y  = 
\alpha
\end{displaymath}
and
\begin{displaymath}
	\alpha^* \alpha \alpha^* = 
 {}_B^Y {{\rm id}_Y}_B^Y   {}_B^Y {f^{-1}}_A^X  = 
 {}_B^Y {f^{-1}}_A^X  = \alpha^*.
\end{displaymath}
Next suppose that 
\begin{equation}\label{aba}
\alpha \beta \alpha = \alpha
\end{equation}
and
\begin{equation}\label{bab}
\beta \alpha \beta = \beta
\end{equation}
where $\beta =  {}_B^{X'} g_A^{Y'} $.
From \eqref{firstset}, \eqref{secondset} and \eqref{aba} it follows that
\begin{align*}
{}_B^Y { f^{-1} }_A^X &= 
\alpha^* = 
\alpha^* \alpha \alpha^* =
\alpha^* \alpha \beta \alpha \alpha^* =
 {}_B^Y {{\rm id}_Y}_B^Y  {}_B^{X'} g_A^{Y'}   {}_A^X {{\rm id}_X}_A^X  \\
&=   {}_B^{ g( g^{-1}(Y \cap X') \cap X ) } 
{ g|_{ g^{-1}(Y \cap X') \cap X } }_A^{ g^{-1}(Y \cap X') \cap X }  .
\end{align*}
Thus we get that $Y = g( g^{-1}(Y \cap X') \cap X ) \subseteq X'$
and $X = g^{-1}(Y \cap X') \cap X \subseteq Y'$. Analogously,
from \eqref{bab}, it follows that $X' \subseteq Y$ and $Y' \subseteq X$.
Thus $X' = Y$ and $Y' = X$ and hence it follows that $\beta = \alpha^*$. 
\end{proof}

\section{Preliminaries on rings}\label{preliminariesrings}

In this section, 
we introduce the category
of partial (commutative) ring isomorphisms
{\rm ISO}$_{cat}$ ({\rm ISOC}$_{cat}$).
This category contains
the well-known
groupoid ISO (ISOC) having all (commutative) rings
as objects and ring isomorphisms as morphisms. 
We also show a result (see Proposition~\ref{isomorphism}), 
concerning central idempotents in rings, that we need in subsequent sections.
Let $A$ be a ring. We always assume that $A$ is
associative and equipped with a multiplicative identity element $1_A$.

\begin{defi}
Let ISO (ISOC) denote the category having all (commutative) rings
as objects and ring isomorphisms as morphisms. 
\end{defi}

\begin{rem}\label{rem:unitalideal}
Recall that an ideal $I$ of a ring $A$ is said to be a \emph{unital ideal}
if $I$, viewed as a ring in itself, is unital.
In this case, the multiplicative identity element of $I$ is denoted by $1_I$
and lies in the center of $A$.
Indeed, let $a \in A$ be arbitrary.
Since $I$ is an ideal of $A$ it follows that
$1_I a , a 1_I \in I$. Thus, $1_I a = 1_I a 1_I = a 1_I$.
Furthermore, if $I$ and $J$ are unital ideals of a ring $A$, then $I \cap J = IJ$.
\end{rem}

Now we will define two ring versions of the inverse category BIJ$_{cat}$
that we defined in the previous section.

\begin{defi}
Let ISO$_{cat}$ (ISOC$_{cat}$) denote the 
subcategory of BIJ$_{cat}$ having (commutative) rings
as objects and, as morphisms, all ${}_A^I f_B^J$
in BIJ$_{cat}$ such that $I$ and $J$ are unital ideals of $A$ and $B$
respectively and $f : J \rightarrow I$ is a ring isomorphism.
Note that the composition of two morphisms,
${}_A^I f_B^J$ and ${}_B^{I'} g_C^{J'}$, in these categories equals
${}_A^{ f(J I') } f|_{J I'} \circ {g|_ {g^{-1}(J I')}} _C^{ g^{-1}(J I') }.$
Define a map $* : ({\rm ISO}_{cat})_1 \rightarrow ({\rm ISO}_{cat})_1$ by
restriction of the map $*$ defined on $( {\rm BIJ}_{cat} )_1$.
This restricts, in turn, to a map
$* : ({\rm ISOC}_{cat})_1 \rightarrow ({\rm ISOC}_{cat})_1$.
\end{defi}

The following is clear.

\begin{prop}\label{ISOISOC}
{\rm ISO}$_{cat}$ and  {\rm ISOC}$_{cat}$ are inverse subcategories of {\rm BIJ}$_{cat}$.
\end{prop}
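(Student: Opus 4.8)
The plan is to verify directly that $\mathrm{ISO}_{cat}$ and $\mathrm{ISOC}_{cat}$ satisfy the definition of an inverse subcategory of $\mathrm{BIJ}_{cat}$, using Proposition~\ref{BIJCAT} to supply everything about the ambient category. Concretely, I would establish three things: (i) the given classes of objects and morphisms form a \emph{subcategory} of $\mathrm{BIJ}_{cat}$, i.e. they contain all relevant identities and are closed under the composition inherited from $\mathrm{BIJ}_{cat}$; (ii) this subcategory is closed under the map $*$ of $\mathrm{BIJ}_{cat}$, so that the restriction of $*$ is well defined; and (iii) consequently it is an \emph{inverse} subcategory, since the defining identities $\alpha\alpha^*\alpha=\alpha$, $\alpha^*\alpha\alpha^*=\alpha^*$ and the uniqueness of $\alpha^*$ already hold in $\mathrm{BIJ}_{cat}$ and are inherited.

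For (i), the identity morphism at a (commutative) ring $A$ is ${}_A^A(\mathrm{id}_A)_A^A$, and $A$ is a unital ideal of itself with $\mathrm{id}_A$ a ring isomorphism, so identities are present. For closure under composition, take ${}_A^I f_B^J$ and ${}_B^{I'} g_C^{J'}$ with $I,J,I',J'$ unital ideals of the respective rings and $f,g$ ring isomorphisms. By the formula recalled in the excerpt, the composite is ${}_A^{f(JI')} f|_{JI'}\circ g|_{g^{-1}(JI')}{}_C^{g^{-1}(JI')}$, where I have used Remark~\ref{rem:unitalideal} to rewrite $J\cap I'$ as $JI'$. The key points to check are that $JI'$ is a unital ideal of $B$ (it is the intersection of two unital ideals, again by Remark~\ref{rem:unitalideal}), that $g^{-1}(JI')$ is a unital ideal of $C$ (a ring isomorphism carries unital ideals to unital ideals, pulling back the identity element), that $f(JI')$ is a unital ideal of $A$ by the same reasoning, and that the restriction $f|_{JI'}\circ g|_{g^{-1}(JI')}$ is a ring isomorphism onto $f(JI')$, being a composite of restrictions of ring isomorphisms to ideals. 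Hence the composite lies again in $\mathrm{ISO}_{cat}$, and in the commutative case everything stays commutative, so the same works for $\mathrm{ISOC}_{cat}$.

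For (ii), if $\alpha={}_A^I f_B^J$ is a morphism of $\mathrm{ISO}_{cat}$, then $\alpha^*={}_B^J (f^{-1})_A^I$; here $J$ is a unital ideal of $B$, $I$ a unital ideal of $A$, and $f^{-1}:I\to J$ is a ring isomorphism, so $\alpha^*$ is again a morphism of $\mathrm{ISO}_{cat}$ (resp.\ $\mathrm{ISOC}_{cat}$). Thus the restriction of $*$ is a well-defined map on morphisms of these subcategories, and its restriction to the inverse subcategory agrees with the ambient $*$ by construction, which is exactly the condition in the definition of inverse subcategory. Finally, for (iii), the identities characterising $\alpha^*$ as the inverse of $\alpha$ in the sense of inverse categories are equations among morphisms that already hold in $\mathrm{BIJ}_{cat}$ by Proposition~\ref{BIJCAT}; since the subcategory is full neither condition is needed, only that composition and $*$ are the restrictions of those in $\mathrm{BIJ}_{cat}$, which we have verified. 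Uniqueness of $\alpha^*$ likewise transfers: any morphism of the subcategory satisfying the inverse-category relations with $\alpha$ must equal $\alpha^*$ because it does so already in $\mathrm{BIJ}_{cat}$.

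I do not expect any serious obstacle here — this is the ``the following is clear'' situation flagged in the text. The only mildly delicate points are bookkeeping: confirming that ring isomorphisms between unital ideals push forward and pull back unital ideals (which reduces to the fact that an isomorphism sends the multiplicative identity of a unital ideal to that of its image, and sends ideals to ideals), and keeping the intersection-versus-product identity $I\cap J = IJ$ from Remark~\ref{rem:unitalideal} in mind so that the composition formula lands in the stated form. Once those are in place, the inverse-category axioms require no fresh verification.
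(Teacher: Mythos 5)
Your verification is correct and amounts to precisely the routine check that the paper dismisses with ``The following is clear'': identities are present, the composite of two morphisms stays in the subcategory (using $I\cap J=IJ$ from Remark~\ref{rem:unitalideal} and the fact that ring isomorphisms transport unital ideals, together with the observation that an ideal of a unital ideal is an ideal of the ambient ring), the subcategory is closed under $*$, and the inverse-category identities and uniqueness of $\alpha^*$ are then inherited from ${\rm BIJ}_{cat}$. One small slip: ${\rm ISO}_{cat}$ is \emph{not} a full subcategory of ${\rm BIJ}_{cat}$, but fullness is never needed --- closure under composition and $*$, plus uniqueness of the generalized inverse in the ambient category, already yield the conclusion.
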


The \emph{center of $A$}, denoted by $Z(A)$, is the subring of $A$
consisting of all elements $a \in A$ with the property that
for all $b \in A$ the equality $ab = ba$ holds. 
We let ${\rm idem}(A)$ denote
the set of all central idempotents of $A$ and we
let ${\rm ideal}(A)$ denote the set 
of all unital ideals of $A$.

\begin{prop}\label{isomorphism}
Let $A$ be a ring.
The map $\theta : {\rm idem}(A) \rightarrow {\rm ideal}(A)$,
defined by $\theta(x) = Ax$, for $x \in {\rm idem}(A)$,
is an isomorphism of multiplicative monoids.
For all $x \in {\rm idem}(A)$ the equality $Z(Ax) = Z(A)x$ holds.
\end{prop}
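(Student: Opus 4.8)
The plan is to verify the three claims separately: (i) $\theta$ is well-defined and multiplicative, (ii) $\theta$ is bijective, and (iii) the center computation $Z(Ax) = Z(A)x$.

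First I would check that $\theta$ lands in ${\rm ideal}(A)$. For $x \in {\rm idem}(A)$, the set $Ax$ is clearly a two-sided ideal since $x$ is central; it is unital with identity $x$ because $x$ is idempotent and central, so $(ax)x = ax$ and $x(ax) = ax$ for all $a \in A$. Next, multiplicativity: for $x,y \in {\rm idem}(A)$ we have $xy \in {\rm idem}(A)$ (centrality and idempotency are preserved under products of commuting central idempotents), and I claim $\theta(xy) = Ax \cap Ay = (Ax)(Ay) = \theta(x)\theta(y)$. The equality $Ax \cap Ay = (Ax)(Ay)$ is exactly the last sentence of Remark~\ref{rem:unitalideal} applied to the unital ideals $Ax$ and $Ay$; and $(Ax)(Ay) = Axy$ follows by commuting $x$ past elements of $A$. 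Also $\theta$ sends $1_A$ to $A = A1_A$, so it is a monoid homomorphism. For injectivity, if $Ax = Ay$ then $x = 1_{Ax} = 1_{Ay} = y$ since the multiplicative identity of a unital ideal is unique (again Remark~\ref{rem:unitalideal}). For surjectivity, given a unital ideal $I$ with identity $1_I$, Remark~\ref{rem:unitalideal} tells us $1_I$ is a central idempotent, and $I = A 1_I = \theta(1_I)$: the inclusion $A1_I \subseteq I$ is because $I$ is an ideal, and $I \subseteq A1_I$ because every $a \in I$ satisfies $a = a 1_I \in A1_I$.

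For the center statement, fix $x \in {\rm idem}(A)$ and work inside the unital ring $Ax$. The inclusion $Z(A)x \subseteq Z(Ax)$ is straightforward: if $z \in Z(A)$ then $zx \in Ax$ and for any $ax \in Ax$ we have $(zx)(ax) = zax \cdot x = azx \cdot x = (ax)(zx)$ using that $z$ and $x$ are both central in $A$. Conversely, take $w \in Z(Ax)$; then $w = wx$ (as $x = 1_{Ax}$) and $w$ commutes with every element of $Ax$. To show $w \in Z(A)x$ I must produce an element of $Z(A)$; the natural candidate is $w + (1_A - x)$, since one checks this is idempotent-adjacent — more precisely, I claim $w$ commutes with all of $A$: for $a \in A$, write $a = ax + a(1_A-x)$; then $wa = w(ax) + wa(1_A-x) = (ax)w + \dots$, but $w(1_A-x) = wx(1_A-x) = 0$ and $(1_A-x)w = 0$ likewise, so $wa = (ax)w = aw$ after noting $aw = awx$ and $w$ commutes with $ax$ in $Ax$. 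Hence $w \in Z(A)$, and since $w = wx$ we get $w \in Z(A)x$.

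The main obstacle I anticipate is the inclusion $Z(Ax) \subseteq Z(A)x$: one has to be careful that an element central in the \emph{subring} $Ax$ — which has a different identity $x \neq 1_A$ — actually commutes with \emph{all} of $A$, not just with $Ax$. The trick is the orthogonal decomposition $a = ax + a(1_A - x)$ together with the fact that any $w \in Ax$ is annihilated on both sides by $1_A - x$; this reduces commutation with $a$ to commutation with $ax \in Ax$, which is given. Everything else is routine bookkeeping with central idempotents, and Remark~\ref{rem:unitalideal} does most of the heavy lifting for the monoid isomorphism part.
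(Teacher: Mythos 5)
Your argument is correct and follows essentially the same route as the paper: injectivity via uniqueness of the multiplicative identity of $Ax$, surjectivity via $I = A1_I$ with $1_I$ a central idempotent, and the inclusion $Z(Ax) \subseteq Z(A)x$ by reducing commutation with an arbitrary $a \in A$ to commutation with $ax \in Ax$ using $w = wx$ and centrality of $x$ (your decomposition $a = ax + a(1_A-x)$ is just a slightly more verbose form of the paper's one-line computation). The stray remark about the candidate $w + (1_A - x)$ is abandoned and harmless, since your direct verification that $w \in Z(A)$ is complete.
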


\begin{proof}
It is clear that $\theta$ is a homomorphism of multiplicative
monoids. Take $x,y \in {\rm idem}(A)$ such that 
$\theta(x) = \theta(y)$. Then $Ax = Ay$. Since both $x$ and $y$
are multiplicative identity elements for the same monoid, it
follows that $x=y$. Thus $\theta$ is injective.
Now we show that $\theta$ is surjective.
Take $I \in {\rm ideal}(A)$.
Recall that $1_I \in Z(A)$, by Remark~\ref{rem:unitalideal}.
By the idempotency of $1_I$
we get that $\theta(1_I) = A 1_I = I.$ Thus the surjectivity of $\theta$ follows.
For the last part, take $x \in {\rm idem}(A)$.
The inclusion $Z(A)x \subseteq Z( Ax )$ clearly holds.
Take $y \in Z( Ax )$. Then $y = ax$ for some $a \in A$.
Clearly, $yx = ax^2 = ax = y$, since $x$ is idempotent.
Thus, it suffices to show that $y \in Z(A)$.
Take $b \in A$. Then, since $x \in Z(A)$ and $y \in Z(Ax)$, 
we get that $yb = yx b = y bx = yb x = bxy = b y x = by$.
\end{proof}

\section{The Picard Inverse Category}\label{sectionpicardinversecategory}

In this section, we recall the definition of (pre-)equivalence data
and some properties of such systems. 
Then we introduce the Picard inverse category PIC$_{cat}$
(see Definition~\ref{definitionpicardinversecategory} 
and Theorem~\ref{invcat}).
From \cite[Definition (3.2)]{bass1968} we recall the following.

\begin{defi}
A \emph{set of pre-equivalence data} $(I,J,P,Q,\alpha,\beta)$
consists of rings $I$ and $J$,
an $I$-$J$-bimodule $P$,
a $J$-$I$-bimodule $Q$ an $I$-bimodule homomorphism
\begin{equation}\label{ONE}
\alpha : P \otimes_J Q \rightarrow I
\end{equation}
and an $I$-bimodule homomorphism
\begin{equation}\label{TWO}
\beta : Q \otimes_I P \rightarrow J \quad 
\end{equation}
such that the following two diagrams commute
\begin{equation}\label{THREE}
\CD
P \otimes_J Q \otimes_I P @> \alpha \otimes {\rm id}_P >> I \otimes_I P \\
@V {\rm id}_P \otimes \beta VV @VV  V \\
P \otimes_J J @>  >> P \\
\endCD 
\hspace{14mm} 
\CD
Q \otimes_I P \otimes_J Q @> \beta \otimes {\rm id}_Q >> J \otimes_J Q \\
@V {\rm id}_Q \otimes \alpha VV @VV V \\
Q \otimes_I I @>  >> Q \\
\endCD
\end{equation}
where the unlabelled arrows are the multiplication maps.
We shall call it a \emph{set of equivalence data} if $\alpha$ and
$\beta$ are isomorphisms.
\end{defi}

Now we gather some well-known properties concerning pre-equivalence data
that we need in the sequel. 

\begin{prop}\label{proppreequivalence}
If $(I,J,P,Q,\alpha,\beta)$ is a set of pre-equivalence data
such that $\alpha$ (or $\beta$) is surjective, then the following assertions hold:
\begin{itemize}

\item[(a)] $\alpha$ (or $\beta$) is an isomorphism;

\item[(b)] $P$ and $Q$ are generators as $I$-modules (or $J$-modules);

\item[(c)] $P$ and $Q$ are finitely generated and projective
$J$-modules ($I$-modules).

\end{itemize}
\end{prop}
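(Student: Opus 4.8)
The statement is the classical ``surjectivity forces equivalence'' result for pre-equivalence data, essentially \cite[Proposition (3.3)]{bass1968}, so the plan is to reproduce Bass's argument, being careful that here we work with (possibly non-commutative, possibly non-unital-looking but actually unital) rings $I$ and $J$. By symmetry it suffices to treat the case where $\alpha$ is surjective; the case of $\beta$ is identical after swapping the roles of $(I,P)$ and $(J,Q)$.

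\textbf{Step 1: $\alpha$ is injective (part (a)).} Since $\alpha : P\otimes_J Q\to I$ is surjective and $I$ is unital, we may write $1_I=\sum_{k} \alpha(p_k\otimes q_k)$ for finitely many $p_k\in P$, $q_k\in Q$. The key computation uses commutativity of the left-hand diagram in \eqref{THREE}: for any $p\in P$ one has $p=\sum_k p_k\cdot\beta(q_k\otimes p)$, and dually, using the right-hand diagram, $q=\sum_k \beta(q\otimes p_k)\cdot q_k$ for any $q\in Q$ --- wait, the second identity instead reads $q = \sum_k q_k \cdot ?$; I will instead derive the ``dual basis'' relations directly from the two commuting squares. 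Given these, suppose $\xi=\sum_j x_j\otimes y_j\in P\otimes_J Q$ lies in $\ker\alpha$. Then
\[
\xi=\sum_j x_j\otimes y_j = \sum_{j,k} x_j\otimes y_j\,\alpha(p_k\otimes q_k)
= \sum_{j,k} x_j\otimes \beta(y_j\otimes p_k)\,q_k
= \sum_{j,k} x_j\,\beta(y_j\otimes p_k)\otimes q_k,
\]
and then $\sum_j x_j\,\beta(y_j\otimes p_k) = \sum_j \alpha(x_j\otimes y_j)\,p_k = \alpha(\xi)\,p_k = 0$ for each $k$, using the left diagram once more to move $\beta$ past the tensor and rewrite $x_j\otimes y_j\otimes p_k$. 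Hence $\xi=0$, so $\alpha$ is bijective, proving (a).

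\textbf{Step 2: generator and projectivity statements (parts (b), (c)).} For (b): the relation $1_I=\sum_k\alpha(p_k\otimes q_k)$ exhibits, for the $I$-module $P$, a surjection $P^{\oplus n}\twoheadrightarrow I$ (send the $k$-th copy via $p\mapsto$ the relevant component built from $q_k$ and $\alpha$), so $P$ is a generator as a left $I$-module; the analogous argument with $Q$ and with right modules handles $Q$. For (c): the dual-basis relations $p=\sum_k p_k\,\beta(q_k\otimes p)$ (with the maps $P\to J$, $p\mapsto\beta(q_k\otimes p)$, serving as coordinate functionals) are exactly a finite dual basis for $P$ as a right $J$-module, so $P$ is finitely generated projective over $J$; symmetrically for $Q$. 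I will organize Steps 1 and 2 around first establishing the two dual-basis identities from the commuting diagrams \eqref{THREE}, then deducing everything formally.

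\textbf{Main obstacle.} The only real subtlety is bookkeeping the left/right module structures and making sure every manipulation of $\sum x_j\otimes y_j$ across the $\otimes_J$ (resp. $\otimes_I$) is legitimate, i.e. that the element being moved across really is a scalar in the correct ring acting on the correct side --- this is precisely what the bimodule-homomorphism hypotheses on $\alpha,\beta$ and the commutativity of \eqref{THREE} guarantee. There is no deep content beyond Bass's original proof; the point of stating it here is that nothing in the argument uses that $I,J$ are commutative or that the modules are faithful, so it applies verbatim in our groupoid-graded setting. I would therefore keep the write-up short, citing \cite{bass1968} for the proof and only indicating the dual-basis identities explicitly since they are reused later.
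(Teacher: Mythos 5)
Your proposal is correct: it reproduces the classical dual-basis argument of Bass, which is exactly what the paper does, since its proof of Proposition~\ref{proppreequivalence} consists of citing \cite[Theorem (3.4)]{bass1968}. The identities you hesitated over ($p=\sum_k p_k\,\beta(q_k\otimes p)$ and $q=\sum_k\beta(q\otimes p_k)\,q_k$) do follow directly from the two commuting squares in \eqref{THREE}, so your derivation of injectivity, the generator property and the finite dual basis goes through as written.
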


\begin{proof}
See \cite[Theorem (3.4)]{bass1968}.
\end{proof}

\begin{prop}\label{propequivalence}
If $(I,J,P,Q,\alpha,\beta)$ is a set of equivalence data, 
then the ring homomorphisms
\begin{displaymath}
	\End_J(P) \leftarrow I \rightarrow \End_J(Q)^{\rm op}
\end{displaymath}
and
\begin{displaymath}
	\End_I(P)^{\rm op} \leftarrow J \rightarrow \End_I(Q),
\end{displaymath}
induced by the bimodule structure on $P$ and $Q$,
are isomorphisms. These isomorphisms restrict to 
ring isomorphisms
\begin{displaymath}
	\End_{I-J}(P) \leftarrow Z(I) \rightarrow \End_{J-I}(Q)
\end{displaymath}
and
\begin{displaymath}
	\End_{I-J}(P) \leftarrow Z(J) \rightarrow \End_{J-I}(Q),
\end{displaymath}
which in turn restrict to group isomorphisms
\begin{displaymath}
	\Aut_{I-J}(P) \leftarrow U(Z(I)) \rightarrow \Aut_{J-I}(Q)
\end{displaymath}
and
\begin{displaymath}
	\Aut_{I-J}(P) \leftarrow U(Z(J)) \rightarrow \Aut_{J-I}(Q).
\end{displaymath}
\end{prop}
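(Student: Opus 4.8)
The plan is to exploit the standard fact (cf. Morita theory, \cite{bass1968}) that when $\alpha$ and $\beta$ are isomorphisms, the pair of functors $P \otimes_J -$ and $Q \otimes_I -$ are mutually inverse equivalences between the categories of left $J$-modules and left $I$-modules (and likewise on the right). First I would establish that the canonical ring homomorphism $I \to \End_J(P)$, sending $a \in I$ to left multiplication by $a$, is an isomorphism. Injectivity follows because, by Proposition~\ref{proppreequivalence}(b), $P$ and $Q$ are generators, so $\beta$ being surjective forces any $a$ acting as zero on $P$ to satisfy $a = \alpha(P \otimes_J Q) a = \alpha(P \otimes_J Q a)$-type vanishing; more directly, $aI = a\,\alpha(P \otimes_J Q) = \alpha(aP \otimes_J Q) = 0$ gives $a = 0$. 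For surjectivity, given $\phi \in \End_J(P)$, one uses the isomorphism $\alpha$ to transport $\phi$: the element $\alpha(\phi \otimes_J \operatorname{id}_Q)(\alpha^{-1}(1_I)) \in I$, via the commuting diagrams \eqref{THREE}, acts as $\phi$. The same argument run with $Q$ in place of $P$, and with the right-module structures, gives the isomorphisms $I \xrightarrow{\sim} \End_J(Q)^{\rm op}$, $J \xrightarrow{\sim} \End_I(P)^{\rm op}$ and $J \xrightarrow{\sim} \End_I(Q)$; by symmetry of the axioms this costs nothing new.

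Next I would restrict these isomorphisms to centers. The key point is that under the isomorphism $I \cong \End_J(P)$, the subring $Z(I)$ corresponds exactly to those $J$-linear endomorphisms of $P$ that are \emph{also} $I$-linear, i.e. to $\End_{I\text{-}J}(P)$: indeed an element $a \in I$ commutes with all left multiplications by $I$ precisely when $a \in Z(I)$, and left multiplication by $a$ being $I$-linear on $P$ means exactly $a(bp) = b(ap)$ for all $b \in I$, $p \in P$, which (again using that $P$ is a generator, so that $IP = P$ faithfully) is equivalent to $ab = ba$ in $I$. Thus the top isomorphism sends $Z(I)$ onto $\End_{I\text{-}J}(P)$, and similarly onto $\End_{J\text{-}I}(Q)$. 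For the statement that $Z(J) \to \End_{I\text{-}J}(P)$ is also an isomorphism, I would invoke Proposition~\ref{propequivalence}'s own second line applied symmetrically — but more honestly, this follows because the equivalence identifies $Z(I)$ with the center of the category and hence with $Z(\End_J(P))$, which equals $Z(\End_I(P)) = Z(J)$ via the $J$-action; so $Z(I)$ and $Z(J)$ have a common image $\End_{I\text{-}J}(P)$ in the endomorphism ring.

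Finally, the group isomorphisms come for free: a ring isomorphism restricts to an isomorphism of unit groups, so $U(Z(I)) \cong \Aut_{I\text{-}J}(P)$ and the rest follow by taking units in the center-level isomorphisms already obtained. I expect the main obstacle to be bookkeeping rather than conceptual: carefully pinning down that the image of $Z(I)$ inside $\End_J(P)$ is literally the set of bimodule endomorphisms, and that this same subring is hit by $Z(J)$, without circular appeal to Morita theory. I would handle this by writing out the two commuting squares in \eqref{THREE} explicitly on elements to produce, for each $\phi \in \End_{I\text{-}J}(P)$, preimages in both $Z(I)$ and $Z(J)$, and checking these are ring homomorphisms by direct computation. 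All the surjectivity arguments reduce, via $\alpha$ and $\beta$, to the single identity $\alpha(P \otimes_J Q) = I$ together with its partner $\beta(Q \otimes_I P) = J$, which hold by Proposition~\ref{proppreequivalence}(a); so once that reduction is in place the remaining steps are routine diagram chases.
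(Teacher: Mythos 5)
The paper gives no argument of its own here --- it simply cites \cite[Theorem (3.5)]{bass1968} --- and your sketch is a correct reconstruction of the standard Morita-theoretic proof behind that citation: injectivity from $\alpha(P\otimes_J Q)=I$ plus bimodule-linearity of $\alpha$, surjectivity by transporting $\phi\in\End_J(P)$ through $\alpha^{-1}(1_I)$ and the diagrams \eqref{THREE}, the center statements via the identification of $\End_{I-J}(P)$ with the centralizer of the $I$-action (equivalently of the $J$-action), and the automorphism statements by passing to unit groups. The only loose phrase, that $Z(J)$ also maps onto $\End_{I-J}(P)$ ``because the equivalence identifies $Z(I)$ with the center of the category'', is unnecessary: just repeat your own centralizer argument verbatim for the isomorphism $J\to\End_I(P)^{\rm op}$, which is what you in fact indicate, so the proposal is sound and matches the intended (cited) proof.
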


\begin{proof}
See \cite[Theorem (3.5)]{bass1968}.
\end{proof}

\begin{rem}\label{remarkgamma}
Let $(I,J,P,Q,\alpha,\beta)$ be a set of equivalence data. 
It follows from Proposition~\ref{propequivalence} that
there is a unique ring isomorphism 
$\gamma_P : Z(J) \rightarrow Z(I)$ with the property 
that for all $p \in P$ and all $b \in Z(J)$
the equality 
$\gamma_P(b) p = p b$ 
holds. 
\end{rem}

\begin{defi}\label{def:parteqdata}
A \emph{set of partial equivalence data}
\begin{displaymath}
	(A,B,I,J,P,Q,\alpha,\beta)
\end{displaymath}
consists of
rings $A$ and $B$, unital
ideals $I$ and $J$ of, respectively, $A$ and $B$
such that $(I,J,P,Q,\alpha,\beta)$ is a set of equivalence data,
$P$ is an $I$-$J$-bimodule and $Q$ is a $J$-$I$-bimodule.
\end{defi}

\begin{rem}
Note that, with the notation and assumptions of Definition~\ref{def:parteqdata},
$P$ (resp. $Q$) extends uniquely to an $A$-$B$-bimodule (resp. $B$-$A$-bimodule).
Thus, we may interchangeably think of $P$ (resp. $Q$) as an $A$-$B$-bimodule or an $I$-$J$-bimodule (resp. $B$-$A$-bimodule or $J$-$I$-bimodule).
\end{rem}

\begin{prop}\label{composition}
Suppose that 
\begin{equation}\label{FIRST}
(A,B,I,J,P,Q,\alpha,\beta)
\end{equation}
and 
\begin{equation}\label{SECOND}
(B,C,I',J',P',Q',\alpha',\beta')
\end{equation} 
are sets of partial equivalence data.
Then
\begin{displaymath}
	(A, C, I'', J'', P'', Q'', \alpha'', \beta'')
\end{displaymath}
is a set of partial equivalence data, where
\begin{equation}\label{proddata}
I'' = \gamma_P(1_J 1_{I'})A, \,\,
J'' = \gamma_{P'}^{-1}(1_J 1_{I'})C,\,\,
P'' = P \otimes_B P',\,\,
Q'' = Q' \otimes_B Q,\end{equation} and for 
$p \in P$, $p' \in P'$, $q \in Q$, $q' \in Q'$, we put
\begin{displaymath}
	\alpha''(p \otimes p' \otimes q' \otimes q) =
\alpha(p \alpha'(p' \otimes q') \otimes q )
\end{displaymath}
and
\begin{displaymath}
	\beta''(q' \otimes q \otimes p \otimes p') = 
\beta'( q' \otimes \beta(q \otimes p) p' ).
\end{displaymath}
\end{prop}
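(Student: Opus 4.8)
The plan is to verify the axioms of partial equivalence data for the octuple $(A,C,I'',J'',P'',Q'',\alpha'',\beta'')$ by reducing everything to the ring level, where we can invoke the composition result for (non-partial) equivalence data together with Proposition~\ref{isomorphism} to control the relevant central idempotents. First I would observe that, by Remark~\ref{rem:unitalideal}, $1_J, 1_{I'} \in Z(B)$, so $1_J 1_{I'}$ is a central idempotent of $B$; applying the ring isomorphisms $\gamma_P : Z(J') \to Z(I)$... more carefully, $\gamma_P : Z(J) \to Z(I)$ of Remark~\ref{remarkgamma} (and here we use the remark that $P$ is simultaneously an $I$-$J$-bimodule, so $\gamma_P$ makes sense on the idempotent $1_J 1_{I'} \in Z(B)$ after noting $1_J 1_{I'} \in Z(J)$ since $1_{I'}$ is central in $B$ hence acts on the ideal $J$) sends it to a central idempotent of $I$, hence of $A$; so by Proposition~\ref{isomorphism}, $I'' = \gamma_P(1_J 1_{I'})A$ is a well-defined unital ideal of $A$, and symmetrically $J'' = \gamma_{P'}^{-1}(1_J 1_{I'})C$ is a unital ideal of $C$. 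Denote $e = 1_J 1_{I'} \in \mathrm{idem}(B)$; the key book-keeping fact is that the ideal $eB$ is exactly the one that makes $P \otimes_B P'$ and $Q' \otimes_B Q$ nonzero, because $P = P 1_J = PeB$ as a right $B$-module (since $1_J$ acts as identity on the $J$-module $P$ and $1_{I'}$ acts as identity after further passing through $P'$) — this is the heart of why the tensor products behave well.

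Next I would identify $P'' = P \otimes_B P'$ as an $I''$-$J''$-bimodule. The left $I$-action on $P$ factors through $I$ acting on $P e$... concretely, one shows $P \otimes_B P' = (Pe) \otimes_{eBe} (eP')$ and that the left $A$-action is via the idempotent $\gamma_P(e)$, so that $I'' = \gamma_P(e)A$ acts unitally on $P''$; symmetrically on the right via $J''$. The same for $Q'' = Q' \otimes_B Q$. Then I would restrict to the rings $I_0 := \gamma_P(e) I = \gamma_P(e) A$ and $J_0 := \gamma_{P'}^{-1}(e) J' = \gamma_{P'}^{-1}(e) C$ (these coincide with $I''$ and $J''$ viewed as rings), over which $P''$ and $Q''$ are honest bimodules, and check that $(I_0, J_0, P'', Q'', \alpha'', \beta'')$ is a set of equivalence data. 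For this I would first confirm that $\alpha''$ and $\beta''$ as defined are well-defined bimodule homomorphisms (the formulas $\alpha''(p\otimes p'\otimes q'\otimes q)=\alpha(p\,\alpha'(p'\otimes q')\otimes q)$ and $\beta''(q'\otimes q\otimes p\otimes p')=\beta'(q'\otimes\beta(q\otimes p)\,p')$ are balanced in each slot — routine), then that they are isomorphisms, and finally that the two pentagon-type triangles of \eqref{THREE} commute. The cleanest route to bijectivity and commutativity is to note that $\alpha, \beta, \alpha', \beta'$ are all isomorphisms, so $P, Q$ implement a Morita equivalence between $I$ and $J$, and $P', Q'$ one between $I'$ and $J'$; restricting to the corners cut out by $e$ gives Morita equivalences between $I_0$ and $eBe$ and between $eBe$ and $J_0$, and composition of Morita equivalences (the classical result behind \cite[Proposition I.3.13]{nas1982}-style arguments, or equivalently Bass \cite{bass1968}) yields the desired equivalence data, with $\alpha'', \beta''$ being precisely the composed pairings; the commutativity of \eqref{THREE} for the composite then follows from the commutativity of \eqref{THREE} for the two given data by a diagram chase using associativity of $\otimes$.

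The main obstacle I anticipate is the bookkeeping around the idempotent $e = 1_J 1_{I'}$: one must be careful that $1_J 1_{I'}$ genuinely lies in a place where both $\gamma_P$ and $\gamma_{P'}^{-1}$ can be applied, and that the resulting idempotents $\gamma_P(e) \in Z(A)$ and $\gamma_{P'}^{-1}(e) \in Z(C)$ really do act as identities on $P''$ and $Q''$ respectively and are compatible with the pairings $\alpha'', \beta''$. Concretely, the subtle point is the interplay between the left $J$-module structure and the right $I'$-module structure on $B$ sitting between $P$ and $P'$: one needs $P \otimes_B P' \cong P \otimes_B (1_J 1_{I'} B) \otimes_B P'$ and must track how $\gamma_P$ transports $1_{I'}$ (which a priori only knows about the $I'$-side) across to the $A$-side. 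I would handle this by first proving a lemma: if $(A,B,I,J,P,Q,\alpha,\beta)$ is partial equivalence data then for any $e' \in \mathrm{idem}(B)$ with $e' \le 1_J$ (i.e. $e' 1_J = e'$), the restriction $(\,\gamma_P(e')A,\ e'B,\ Pe',\ e'Q,\ \alpha|,\ \beta|\,)$ is again partial equivalence data — this isolates exactly the compression step — and then apply it with $e' = 1_J 1_{I'}$ on the $P$-side and the analogous statement with $e' = 1_J 1_{I'}$ on the $Q'$-side, after which the composition of the two compressed (now genuinely Morita) data is the standard Bass construction and the verification of \eqref{THREE} is a formal diagram chase that I would not grind through in detail.
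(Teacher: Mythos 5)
Your argument is correct in outline, but it takes a genuinely different route from the paper's. The paper proves the statement directly: it obtains the well-definedness of $I''$ and $J''$ from Proposition~\ref{isomorphism} exactly as you do, shows that $\alpha''$ and $\beta''$ are isomorphisms by exhibiting them as composites of explicit chains of $A$- (respectively $C$-) bimodule isomorphisms, e.g.\ $P\otimes_B P'\otimes_C Q'\otimes_B Q\cong P\otimes_B I'\otimes_B Q\cong P1_J1_{I'}\otimes_B Q\cong\gamma_P(1_J1_{I'})I=I''$, and then verifies the two diagrams in \eqref{THREE} for $(\alpha'',\beta'')$ by a direct element-wise computation using \eqref{THREE} for the two given data. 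You instead modularize: a compression lemma (cutting a set of partial equivalence data down by a central idempotent $e'\leq 1_J$, which is indeed provable with the paper's own tools --- surjectivity of the restricted pairings onto $\gamma_P(e')A$ and $e'B$ together with Proposition~\ref{proppreequivalence}(a)), followed by the classical Bass composition of the two compressed, genuine equivalence data, together with the identifications $P\otimes_B P'\cong Pe\otimes_{eB}eP'$ and $Q'\otimes_B Q\cong Q'e\otimes_{eB}eQ$ for $e=1_J1_{I'}$; your $\alpha''$ and $\beta''$ are indeed the composite pairings of that construction, so the approach closes. What your route buys is conceptual clarity and a reusable lemma isolating the role of the idempotent $e$; what it costs is that the two verifications you defer (the compression lemma and the coherence diagrams \eqref{THREE} for the composite) are precisely the computations the paper writes out, so a complete write-up would be of comparable length. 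One small slip to fix: the assertion that $P=P1_J=PeB$ as a right $B$-module is false as stated, since $1_{I'}$ need not act as the identity on $P$; what is true, and what you actually use later, is $P\otimes_B P'=Pe\otimes_B P'\cong Pe\otimes_{eB}eP'$, because $1_{I'}$ acts as the identity on $P'$.
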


\begin{proof}
We begin by noticing that $P' \otimes_C Q' \cong P' \otimes_{J'} Q'$.
Indeed, $J'$ is a unital ideal of $C$ and hence $J'=1_{J'}C$.
Moreover, $P'$ is a right $J'$-module and $Q'$ is a left $J'$-module.
Thus, $P' \otimes_C Q' \ni p' \otimes_C q' \mapsto p' \otimes_{J'} q' \in P' \otimes_{J'} Q'$ is a well-defined isomorphism of $I'$-bimodules (and $B$-bimodules).

The map $\alpha'' : P'' \otimes_C Q''  \rightarrow I''$
is an isomorphism of $A$-bimodules since it is the
composition of the following chain of $A$-bimodule isomorphisms:
\begin{align*}
P \otimes_B P' \otimes_C Q' \otimes_B Q & 
\cong
P \otimes_B P' \otimes_{J'} Q' \otimes_B Q
\cong
P \otimes_B I' \otimes_B Q \\
&=
P \otimes_B 1_{I'} B \otimes_B Q
= P 1_{I'} \otimes_B B \otimes_B Q \\
&
\cong
P 1_J 1_{I'} \otimes_B Q \\
&\cong \gamma_P(1_J 1_{I'}) P \otimes_B Q
\cong
\gamma_P(1_J 1_{I'}) I
=
 \gamma_P(1_J 1_{I'}) A.
\end{align*}
Analogously, the map
$\beta'' : Q'' \otimes_A P'' \rightarrow J''$
is an isomorphism of $C$-bimodules since it is the
composition of the following chain of $C$-bimodule isomorphisms:
\begin{align*} Q' \otimes_B Q \otimes_A P \otimes_B P'
& \cong
Q' \otimes_B J \otimes_B P
=
Q' \otimes_B 1_J B \otimes_B P' \\
& =
 Q' \otimes_B B \otimes_B 1_J P'
\cong
Q' \otimes_B 1_J 1_{I'} P' \\
&
\cong
Q' \otimes_B P' \gamma_{P'}^{-1}(1_J 1_{I'})
\cong
J' \gamma_{P'}^{-1}(1_J 1_{I'})
=
 \gamma_{P'}^{-1}(1_J 1_{I'})C.
\end{align*}
From Proposition~\ref{isomorphism} it follows that
$I'' = \gamma_P(1_J 1_{I'})A$ and 
$J'' = \gamma_{P'}^{-1}(1_J 1_{I'})C$
are well-defined.
Now we verify the diagrams in \eqref{THREE}.
By abuse of notation, we let $m$ denote all
of the various multiplication maps.
Take $p_1,p_2 \in P$, $p_1',p_2' \in P'$,
$q_1,q_2 \in Q$ and $q_1',q_2' \in Q'$.
Put
$p_1'' = p_1 \otimes p_1'$, 
$p_2'' = p_2 \otimes p_2''$,
$q_1'' = q_1' \otimes q_1$ 
and
$q_2'' = q_2' \otimes q_2$. 
Then, by making use of \eqref{THREE} for 
\eqref{FIRST} and \eqref{SECOND}, we get that
{\small
\begin{align*}
( m \circ ( \alpha'' \otimes {\rm id}_{P''} ) )
( p_1'' \otimes q_1'' \otimes p_2'' ) &=
 ( m \circ ( \alpha'' \otimes {\rm id}_{P''} ) )
( p_1 \otimes p_1' \otimes q_1' \otimes q_1 \otimes p_2 \otimes p_2' )\\ &
= \alpha( p_1 \alpha'( p_1' \otimes q_1') \otimes q_1 ) p_2 \otimes p_2'\\ &= ( m \circ ( \alpha \otimes {\rm id}_P ) ) 
( p_1 \alpha'(p_1' \otimes q_1') \otimes q_1 \otimes p_2 )  \otimes p_2'\\ &=  ( m \circ ( {\rm id}_P \otimes \beta ) ) 
( p_1 \alpha'(p_1' \otimes q_1') \otimes q_1 \otimes p_2 )  \otimes p_2' \\ &= p_1 \alpha'( p_1' \otimes q_1' ) \beta( q_1 \otimes p_2 )
\otimes p_2' \\ &= p_1 \otimes \alpha'( p_1' \otimes q_1' ) \beta( q_1 \otimes p_2 ) p_2' \\ &= p_1 \otimes 
( m \circ ( \alpha' \otimes {\rm id}_{P'} ) )
(p_1' \otimes q_1' \otimes \beta(q_1 \otimes p_2) p_2' ) \\ &= p_1 \otimes 
( m \circ ( {\rm id}_{P'} \otimes \beta' ) )
(p_1' \otimes q_1' \otimes \beta(q_1 \otimes p_2) p_2' ) \\ &= p_1 \otimes p_1' \beta'( q_1' \otimes \beta( q_1 \otimes p_2 ) p_2' ) \\ &= (m \circ ( {\rm id}_{P''} \otimes \beta'' ) ) 
( p_1'' \otimes q_1'' \otimes p_2'' )
\end{align*}
}
and
{\small
\begin{align*}
( m \circ ( {\rm id}_{Q''} \otimes \alpha'' ) )
( q_1'' \otimes p_1'' \otimes q_2'' ) &= q_1' \otimes q_1 \alpha ( p_1 \alpha' (p_1' \otimes q_2') \otimes q_2 )\\ &= q_1' \otimes ( m \circ ( {\rm id}_Q \otimes \alpha ) )
(q_1 \otimes p_1 \alpha'(p_1' \otimes q_2') \otimes q_2 )\\ &= q_1 \otimes ( m \circ ( \beta \otimes {\rm id}_Q ) )
(q_1 \otimes p_1 \alpha'(p_1' \otimes q_2') \otimes q_2 ) \\ &= q_1' \otimes \beta( q_1 \otimes p_1 \alpha'( p_1' \otimes q_2' ) ) q_2 \\ &= q_1' \otimes \beta(q_1 \otimes p_1) \alpha'(p_1' \otimes q_2') q_2\\ &= q_1' \beta(q_1 \otimes p_1) \alpha'( p_1' \otimes q_2' ) \otimes q_2\\ &= q_1' \alpha'( \beta(q_1 \otimes p_1)p_1' \otimes q_2' ) \otimes q_2\\ &= ( m \circ ( {\rm id}_{Q'} \otimes \alpha' ) )
( q_1' \otimes \beta(q_1 \otimes p_1)p_1' \otimes q_2' ) \otimes q_2 \\ &= (m \circ ( \beta' \otimes {\rm id}_{Q'} ) )
( q_1' \otimes \beta(q_1 \otimes p_1)p_1' \otimes q_2' ) \otimes q_2 \\ &= \beta'( q_1' \otimes \beta( q_1 \otimes p_1 )p_1' ) q_2' \otimes q_2\\ &= ( m \circ ( \beta'' \otimes {\rm id}_{Q''} ) )
(q_1'' \otimes p_1'' \otimes q_2''),
\end{align*} 
}
which finishes the proof.
\end{proof}

To motivate the approach taken later, we now recall the 
definition of the Picard groupoid PIC.

\begin{defi}
Let PIC denote the category 
having as objects all unital rings.
A morphism in PIC from $B$ to $A$ is the collection of all
$A$-$B$-bimodule isomorphism classes $[P]$, for invertible $A$-$B$-bimodules $P$.
Given two such classes $[P]$ and $[Q]$,
where $d([P]) = B = c([Q])$, we put
$[P][Q] = [P \otimes_B Q]$.
Then PIC is a groupoid. Indeed,
if $P$ is an invertible $A$-$B$-bimodule,
then there is an invertible $B$-$A$-bimodule $Q$
such that $P \otimes_B Q \cong A$ (as $A$-bimodules)
and $Q \otimes_A P \cong B$ (as $B$-bimodules).
Thus, if we put $[P]^{-1} = [Q]$, then, clearly
$[P][P]^{-1} = [A]$ and $[P]^{-1}[P] = [B]$.
\end{defi}

\begin{defi}\label{definitionpicardinversecategory}
Let $P$ be an $A$-$B$-bimodule and suppose that
$I$ and $J$ are unital ideals of, respectively, $A$ and $B$,
making $P$ an $I$-$J$-bimodule.
We will indicate this by writing   ${}_A^I P_B^J.$ 
We say that ${}_A^I P_B^J$ is \emph{partially invertible}
if there is ${}_B^J Q_A^I$ and maps $\alpha$ and $\beta$ 
such that $(A,B,I,J,P,Q,\alpha,\beta)$ is a set of partial equivalence data. 
Let PART denote the collection of all partially invertible
bimodules ${}_A^I P_B^J$.
Define an equivalence relation $\sim$ on PART 
by saying that ${}_A^I P_B^J \sim {}_{A'}^{I'} {P'}_{B'}^{J'}$  if 
\begin{displaymath}
	(A,B,I,J) = (A', B', I' J')\,\,\text{ and}\,\,\, 
P \cong P' \,\,\, \text{as \textit{I}-\textit{J}-bimodules.}
\end{displaymath}
The equivalence class of ${}_A^I P_B^J$ in PART 
will be denoted by $[ {}_A^I P_B^J ]$.
The class of objects in ${\rm PIC}_{cat}$ consists of all rings.
The class of morphisms in ${\rm PIC}_{cat}$ consists of all
equivalence classes $[ {}_A^I P_B^J ]$ of partially invertible modules ${}_A^I P_B^J$. 
Define the domain and codomain of a
morphism $[ {}_A^I P_B^J ]$ in ${\rm PIC}_{cat}$ by the relations
$d( [ {}_A^I P_B^J ] ) = B$ and
$c( [ {}_A^I P_B^J ] ) = A$, respectively.
Given a ring $A$, the identity morphism at $A$
is defined to be the morphism $[{}_A^A A_A^A ]$.
Given two morphisms $[ {}_A^I P_B^J ]$ and $[ {}_B^{I'} {P'}_C^{J'} ]$
in ${\rm PIC}_{cat}$ put
$[ {}_A^I P_B^J ] [ {}_B^{I'} {P'}_C^{J'} ] =
[ {}_A^{I''} {P''}_C^{J''} ]$, where $I''$, $P''$ and $J''$
are defined in Proposition~\ref{composition}.
It is clear that the morphisms of the form
$[{}_A^A A_{A}^A]$, for rings $A$, satisfy the
axioms for identity morphisms of ${\rm PIC}_{cat}$.
If $[ {}_A^I P_B^J ] \in ( {\rm PIC}_{cat} )_1$,
then there is ${}_B^J Q_A^I$ and maps $\alpha$ and $\beta$ such that
$(A,B,I,J,P,Q,\alpha,\beta)$ is a set of partial equivalence data. 
Put $[ {}_A^I P_B^J ]^* = [{}_B^J Q_A^I]$.
\end{defi}

\begin{thm}\label{invcat}
${\rm PIC}_{cat}$ is an inverse category.
\end{thm}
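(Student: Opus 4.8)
The plan is to verify that ${\rm PIC}_{cat}$ is a well-defined category and that the assignment $[{}_A^I P_B^J] \mapsto [{}_A^I P_B^J]^* = [{}_B^J Q_A^I]$ furnishes, for each morphism, a two-sided inverse in the inverse-semigroup sense, with uniqueness. So the proof naturally splits into three parts: (1) well-definedness of composition and of $*$; (2) the identity axioms and associativity, making ${\rm PIC}_{cat}$ a category; (3) the two inverse-category axioms, $[P][Q][P] = [P]$ and $[Q][P][Q] = [Q]$ for $[Q] = [P]^*$, together with uniqueness of the element satisfying these.

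First I would dispense with well-definedness. Composition is given by Proposition~\ref{composition}, so I only need that it respects the equivalence relation $\sim$ on PART: if $P \cong \tilde P$ as $I$-$J$-bimodules and $P' \cong \tilde P'$ as $I'$-$J'$-bimodules, then $P \otimes_B P' \cong \tilde P \otimes_B \tilde P'$ and the ideals $I'', J''$ computed in \eqref{proddata} depend only on $I, J, I'$ and on the isomorphism $\gamma_P$, which by Remark~\ref{remarkgamma} is intrinsic to the equivalence data and hence unchanged under bimodule isomorphism; this is a routine diagram chase. For $*$ I need that if $(A,B,I,J,P,Q,\alpha,\beta)$ and $(A,B,I,J,P,Q',\alpha',\beta')$ are both partial equivalence data, then $Q \cong Q'$ as $J$-$I$-bimodules — this follows from Proposition~\ref{propequivalence} (or directly, since $Q \cong \Hom_I(P,I)$ canonically when $\alpha,\beta$ are isomorphisms), so $[{}_B^J Q_A^I]$ is independent of the chosen partner. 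The identity and associativity axioms for the category then follow: the identity morphism $[{}_A^A A_A^A]$ works because tensoring a partial equivalence datum with the trivial datum $(A,A,A,A,A,A,m,m)$ returns it up to the canonical isomorphisms $A \otimes_A P \cong P \cong P \otimes_B B$ (and here $\gamma_A = {\rm id}$, so the ideal bookkeeping is trivial); associativity reduces to associativity of $\otimes$ together with the uniqueness, already used, of the associated central isomorphisms $\gamma$.

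The heart of the matter is the inverse-category axiom. Given $[{}_A^I P_B^J]$ with partner $[{}_B^J Q_A^I]$, I compute $[{}_A^I P_B^J][{}_B^J Q_A^I]$ using Proposition~\ref{composition} with $(B,C,I',J',P',Q',\alpha',\beta') = (B,A,J,I,Q,P,\beta,\alpha)$. Plugging in, $P'' = P \otimes_B Q$, which via $\alpha$ is isomorphic to $I$ as an $I$-bimodule, and one checks the target ideals come out to $I$ and $J$ respectively using $\gamma_P(1_J 1_J) = \gamma_P(1_J) = 1_I$; so $[{}_A^I P_B^J][{}_B^J Q_A^I] = [{}_A^I I_A^I]$, and symmetrically $[{}_B^J Q_A^I][{}_A^I P_B^J] = [{}_B^J J_B^J]$. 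These are precisely the ``local identities'' at the objects-with-idempotent $I \le A$ and $J \le B$, and composing once more with $[{}_A^I P_B^J]$ returns $[{}_A^I I_A^I][{}_A^I P_B^J] = [{}_A^I P_B^J]$ via $I \otimes_I P \cong P$ (and analogously for $Q$), giving both $PQP = P$ and $QPQ = Q$ at the level of equivalence classes. For uniqueness, suppose $[{}_B^{J'} Q'_A^{I'}]$ also satisfies $[P][Q'][P] = [P]$ and $[Q'][P][Q'] = [Q']$. Reading off domains and codomains of both sides forces $I' = I$ and $J' = J$ (this is where the idempotent-tracking in the definition of $\sim$ pays off, and where Proposition~\ref{isomorphism} identifying unital ideals with central idempotents is the right bookkeeping tool). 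Then the standard inverse-semigroup argument — $[Q'] = [Q'][P][Q'] = [Q'][P][Q][P][Q'] = \dots = [Q]$, using that $[P][Q]$ and $[P][Q']$ are idempotents in the local monoid and commute there because idempotents in $\Pic$-type monoids given by bimodule iso classes commute (they are honestly central, being classes of ideals) — yields $[Q'] = [Q]$.

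The main obstacle, I expect, is not the algebra of the inverse axioms, which is formal once composition is set up, but the careful handling of the change-of-rings bookkeeping: every tensor-hom manipulation must be shown to respect the passage from the ring $A$ to the ideal $I$ (and back), and one must keep track of which bimodule structure — over $A$ or over $I$ — is in force at each step, since the equivalence relation $\sim$ only remembers the $I$-$J$-bimodule structure while the morphisms are named with their ambient rings $A, B$. Isolating the fact (essentially contained in Proposition~\ref{isomorphism} and Remark~\ref{remarkgamma}) that $\gamma_P$ carries $1_J$ to $1_I$, and that all relevant idempotents involved are central, is what makes both the computation $PQP=P$ and the uniqueness argument go through cleanly.
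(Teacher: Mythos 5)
Your setup (well-definedness, identities, associativity, and the computations $[{}_A^I P_B^J][{}_B^J Q_A^I]=[{}_A^I I_A^I]$, $[{}_B^J Q_A^I][{}_A^I P_B^J]=[{}_B^J J_B^J]$, hence $gg^*g=g$ and $g^*gg^*=g^*$) matches the paper's proof in substance. The gap is in the uniqueness of the generalized inverse. First, the claim that ``reading off domains and codomains of both sides forces $I'=I$ and $J'=J$'' does not work: the domain and codomain of $[{}_B^{J'}{Q'}_A^{I'}]$ are the ambient rings $B$ and $A$ regardless of the ideals, so the ideal equalities must be extracted by actually computing the composites via the $\gamma$ maps of Proposition~\ref{composition}. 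Moreover, a single relation $[P][Q'][P]=[P]$ only yields one-sided inclusions; in the paper's argument the relation $ghg=g$ is massaged into $g^*ghgg^*=g^*$, whose ideal data (via Proposition~\ref{isomorphism}) give $J\subseteq K$ and $I\subseteq L$, and the reverse inclusions come only from the second relation $hgh=h$. Second, and more seriously, your closing ``standard inverse-semigroup argument'' assumes that the idempotents $[P][Q]$, $[P][Q']$ (and $[Q][P]$, $[Q'][P]$) commute. In inverse semigroup theory, commutativity of idempotents is \emph{equivalent} to uniqueness of generalized inverses, so invoking it here is circular unless you prove it independently in ${\rm PIC}_{cat}$ first. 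Your justification --- that these idempotents are ``classes of ideals, hence central'' --- is not established: a priori $[P\otimes_B Q']$ need not be the class of an ideal at all, and even an ideal class $[{}_B^J J_B^J]$ need not commute with an arbitrary morphism or idempotent $[{}_B^{I_1} M_B^{L_1}]$, since the two composites have underlying bimodules $1_J M$ and $M 1_J$, which in general differ, and carry different ideal decorations $\gamma$-twisted on opposite sides.

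The paper avoids all of this by a direct computation: assuming $ghg=g$ and $hgh=h$ with $h=[{}_B^K M_A^L]$, it forms $[{}_B^J J_B^J][{}_B^K M_A^L][{}_A^I I_A^I]=[{}_B^J Q_A^I]$, computes the resulting ideals $\gamma_J(1_J\gamma_M(1_L1_I))B$ and $\gamma_{M\otimes_A I}^{-1}(1_J\gamma_M(1_L1_I))A$ explicitly, deduces $J\subseteq K$, $I\subseteq L$, obtains $K\subseteq J$, $L\subseteq I$ from the symmetric relation, and then concludes $h=g^*$ from the displayed identity of classes. To repair your proof you would either need to carry out an analogous explicit ideal computation, or first prove (independently of uniqueness) that all idempotent endomorphisms in ${\rm PIC}_{cat}$ commute --- which is essentially the same amount of work.
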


\begin{proof}
First we show that the partial composition in $( {\rm PIC}_{cat} )_1$ is associative.
Suppose that
$[ {}_A^{I_1} {P_1}_B^{J_1} ]$,
$[ {}_B^{I_2} {P_2}_C^{J_2} ]$ and
$[ {}_C^{I_3} {P_3}_D^{J_3} ]$
are morphisms in ${\rm PIC}_{cat}$.
We need to show that 
\begin{equation}\label{associativity}
\left(  [ {}_A^{I_1} {P_1}_B^{J_1} ]
[ {}_B^{I_2} {P_2}_C^{J_2} ] \right )
[ {}_C^{I_3} {P_3}_D^{J_3} ]
=
[ {}_A^{I_1} {P_1}_B^{J_1} ]
\left( [ {}_B^{I_2} {P_2}_C^{J_2} ]
[ {}_C^{I_3} {P_3}_D^{J_3} ]   \right ).
\end{equation}
By repeated application of the composition in ${\rm PIC}_{cat}$
it follows that \eqref{associativity} is equivalent to
showing the equalities
\begin{equation}\label{left}
\gamma_{P_1 \otimes_B P_2} ( \gamma_{P_2}^{-1}(1_{J_1} 1_{I_2}) 1_{I_3} ) =
\gamma_{P_1} ( 1_{J_1} \gamma_{P_2} ( 1_{J_2} 1_{I_3} ) )
\end{equation}
and
\begin{equation}\label{right}
\gamma_{P_3}^{-1}( \gamma_{P_2}^{-1} ( 1_{J_1} 1_{I_2} ) 1_{I_3} ) =
\gamma_{P_2 \otimes_C P_3}^{-1} ( 1_{J_1} \gamma_{P_2} ( 1_{J_2} 1_{I_3} ) ).
\end{equation}
First we show \eqref{left}. Take $p_1 \in P_1$ and $p_2 \in P_2$. Then
\begin{align*}p_1 \otimes p_2 \gamma_{P_2}^{-1} ( 1_{J_1} 1_{I_2} ) 1_{I_3}& =
p_1 \otimes p_2 \gamma_{P_2}^{-1} ( 1_{J_1} 1_{I_2} ) 1_{J_2} 1_{I_3} 
= 
p_1 \otimes 1_{J_1} 1_{I_2} p_2 1_{J_2} 1_{I_3} \\&=  p_1 \otimes 1_{J_1} p_2 1_{J_2} 1_{I_3} 
= 
p_1 1_{J_1 }\otimes p_2 1_{J_2} 1_{I_3} \\& =
p_1 1_{J_1 } \otimes \gamma_{P_2} ( 1_{J_2} 1_{I_3} ) p_2 
=  
p_1 1_{J_1 } \gamma_{P_2} ( 1_{J_2} 1_{I_3} ) \otimes p_2 \\& =   
\gamma_{P_1} ( 1_{J_1 } \gamma_{P_2} ( 1_{J_2} 1_{I_3} ) ) p_1 \otimes p_2.
\end{align*}
\noindent Now we show \eqref{right}.  Take $p_2 \in P_2$ and $p_3 \in P_3$. Then
\begin{align*}
1_{J_1} \gamma_{P_2}( 1_{J_2} 1_{I_3} ) p_2 \otimes p_3& =
1_{J_1} 1_{I_2} \gamma_{P_2}( 1_{J_2} 1_{I_3} ) p_2 \otimes p_3 
= 
1_{J_1} 1_{I_2} p_2 1_{J_2} 1_{I_3} \otimes p_3 \\
&= 1_{J_1} 1_{I_2} p_2 1_{I_3} \otimes p_3 
= 
p_2 \gamma_{P_2}^{-1} ( 1_{J_1} 1_{I_2} ) 1_{I_3} \otimes p_3 \\& = 
p_2 \otimes \gamma_{P_2}^{-1} ( 1_{J_1} 1_{I_2} ) 1_{I_3} p_3 
=p_2 \otimes p_3 \gamma_{P_3}^{-1} ( \gamma_{P_2}^{-1} ( 1_{J_1} 1_{I_2} ) 1_{I_3} ),
\end{align*}
as desired.
Next we show the axioms for $*$.
Take $g = [ {}_A^I P_B^J ] \in ( {\rm PIC}_{cat} )_1$.
Then there is ${}_B^J Q_A^I$ and maps $\alpha$ and $\beta$ such that
$(A,B,I,J,P,Q,\alpha,\beta)$ is a set of partial equivalence data. 
Put $g^* = [{}_B^J Q_A^I]$. Then
\begin{align*}g g^* &= [ {}_A^I P_B^J ] [ {}_B^J Q_A^I ]
=\left[ {}_A^{ \gamma_P(1_J 1_J)A } ( P \otimes_B Q )_A^{ \gamma_Q^{-1}(1_J 1_J)A }\right]\\
&= [ {}_A^{ 1_I A } ( P \otimes_B Q )_A^{ 1_I A }]
=[ {}_A^I I_A^I ].
\end{align*}
Using this we get that
\begin{align*}
g g^* g =[ {}_A^I I_A^I ] [ {}_A^I P_B^J ] = 
[ {}_A^{\gamma_I(1_I 1_I)A} (I \otimes_A P)_B^{ \gamma_P^{-1}(1_I 1_I)B } ]
=  [{}_A^{1_I A} P_B^{1_J B} ]
= [ {}_A^I P_B^B ]
= g\end{align*}
and
\begin{align*}
g^* g g^* = 
[ {}_B^J Q_A^I ] [ {}_A^I I_A^I ] = 
\left[ {}_B^{ \gamma_Q(1_I 1_I)B } ( Q \otimes_A I )_A^{ \gamma_I^{-1}(1_I 1_I)A }\right ]
= [ {}_B^{1_J B} Q_A^{1_I A} ] = [ {}_B^J Q_A^I ] = g^*.
\end{align*}
Now we show uniqueness of $g^*$. To this end, first note that
\begin{displaymath}
	g^* g = [ {}_B^J Q_A^I ] [ {}_A^I P_B^J ] =
\left[ {}_B^{ \gamma_Q(1_I 1_I)B } (Q \otimes_A P)_B^{ \gamma_P^{-1}( 1_I 1_I ) B } \right] = [ {}_B^{1_J B} J_B^{1_J B} ] = [ {}_B^J J_B^J ].
\end{displaymath}
Next, suppose that 
\begin{equation}\label{ghg}
ghg = g\,\,\,\,{\rm and}\,\,\,\,
hgh = h
\end{equation}
for some $h = [ {}_B^K M_A^L ]$ with $h^* = [ {}_A^L N_B^K ]$.
From the first equality in \eqref{ghg} it follows that
$g^* g h g g^* = g^* g g^*$ and thus that
$[{}_B^J J_B^J] [ {}_B^K M_A^L ] [ {}_A^I I_A^I ] = [{}_B^J Q_A^I]$.
Rewriting the last equality we get that
\begin{equation}\label{basic}
\left[ {}_B^{ \gamma_J( 1_J \gamma_M(1_L 1_I) )B } 
( J \otimes_B M \otimes_A I)_A^{ \gamma_{M \otimes_A I}^{-1}( 1_J \gamma_M(1_L 1_I) ) A } \right] =
[{}_B^J Q_A^I] 
\end{equation}
and thus that
\begin{equation}\label{inclusion1}
\gamma_J( 1_J \gamma_M(1_L 1_I) )B = J
\end{equation}
and
\begin{equation}\label{inclusion2}
\gamma_{M \otimes_A I}^{-1}( 1_J \gamma_M(1_L 1_I) ) A = I.
\end{equation}
Since $\gamma_J$ is the identity map 
$Z(J) \rightarrow Z(J)$, \eqref{inclusion1} implies that
$1_J \gamma_M(1_L 1_I) B = J$
and hence, in particular, that $J \subseteq K$.
Using that $\gamma_I$ equals the identity map on $Z(I)$,
it follows that
$\gamma_{M \otimes_A I}^{-1} : Z( \gamma_M(1_L 1_I ) ) \rightarrow Z(1_L 1_I)$.
From \eqref{inclusion2} it therefore, in particular,
follows that $I \subseteq L$.
From the second equality in \eqref{ghg} it follows, by symmetry, that
$J \subseteq K$ and $L \subseteq I$.
Thus $J = K$ and $L = I$ and hence
from \eqref{basic} it follows that $h = g^*$.\end{proof}

\begin{exa}[{\bf The Picard semigroup of a commutative ring}]
Let $R$ be a unital commutative ring and let $M$ be a finitely generated (central) $R$-bimodule of rank less than or equal to one, that is, ${\bf rk}(M_\mathfrak p)\leq 1,$ for all $\mathfrak p\in {\rm{Spec}}(R).$ Let $M^*=\Hom_R(M,R)$ be the dual of $M.$ Then by \cite[Proposition 3.8, Lemma 3.9]{DPP} there exists  $e\in \mathrm{idem}(R)$ and an $R$-bimodule isomorphism $\alpha\colon M\otimes_R M^*\to Re,$ given by $\alpha(m\otimes f) := f(m),$ for all $f\in M^* $ and $m\in M.$ Moreover, by \cite[ Lemma 3.10]{DPP} the isomorphisms classes of $M$ and $M^*$ are elements of the Picard group $\Pic(Re).$ In particular, both $M$ and $M^*$ are unital $Re$-bimodules. From this we get that $(R,R,Re,Re,M, M^*, \alpha,\alpha)$ is a set of partial equivalence data. By  \cite[Proposition 3.8]{DPP} the inverse subcategory of ${\rm PIC}_{cat}$, whose only object is $R$ and whose morphisms are of the form $[_{R}^{Re} M_{R}^{Re}]$, is a commutative  inverse semigroup, denoted by ${\rm PicS}(R)$. It was defined in \cite{DPP} and is called \emph{the Picard semigroup of $R.$}
\end{exa}

\begin{defi}\label{defgamma}
Now we will define a partial functor of inverse categories 
$L : {\rm PIC}_{cat} \rightarrow {\rm ISOC}_{cat}$.
If $A$ is a ring, then put $L(A) = Z(A)$.
If $[ {}_A^I P_B^J ]$ is a morphism in PIC$_{cat}$, then put 
$L ( [ {}_A^I P_B^J ] ) = {}_{Z(A)}^{Z(I)} { \gamma_P }_{Z(B)}^{Z(J)} $ 
where $\gamma_P : Z(J) \rightarrow Z(I)$ is the ring isomorphism 
defined in Remark~\ref{remarkgamma}.
\end{defi}

\begin{prop}\label{lfunctor}
The map $L : {\rm PIC}_{cat} \rightarrow {\rm ISOC}_{cat}$ 
is a functor and hence, by Proposition~\ref{ordinaryfunctorpartial},
a partial functor of inverse categories.
\end{prop}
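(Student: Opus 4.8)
The plan is to verify the three axioms (I1), (I2), (I3) for an ordinary functor, which is actually stronger than what is needed: by Proposition~\ref{ordinaryfunctorpartial}, once $L$ is shown to be an ordinary functor, it is automatically a partial functor of inverse categories. First I would check (I1): given a morphism $[{}_A^I P_B^J]$ in $\mathrm{PIC}_{cat}$, its image $L([{}_A^I P_B^J]) = {}_{Z(A)}^{Z(I)}(\gamma_P)_{Z(B)}^{Z(J)}$ must be a morphism in $\mathrm{ISOC}_{cat}$ with domain $Z(B) = L(B)$ and codomain $Z(A) = L(A)$. This amounts to observing that $Z(I)$ and $Z(J)$ are unital ideals of $Z(A)$ and $Z(B)$ respectively --- which follows from Proposition~\ref{isomorphism}, since $I = A1_I$ with $1_I \in \mathrm{idem}(A)$ gives $Z(I) = Z(A)1_I$, a unital ideal of $Z(A)$ --- and that $\gamma_P : Z(J) \to Z(I)$ is a ring isomorphism, which is exactly its defining property from Remark~\ref{remarkgamma} together with Proposition~\ref{propequivalence}.

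Next I would check (I2): for a ring $A$, the identity morphism is $[{}_A^A A_A^A]$, and I must show $L([{}_A^A A_A^A]) = {}_{Z(A)}^{Z(A)}(\mathrm{id}_{Z(A)})_{Z(A)}^{Z(A)}$, the identity at $Z(A)$ in $\mathrm{ISOC}_{cat}$. Here the relevant equivalence data is $(A,A,A,A,A,A,m,m)$ with the multiplication maps, and $\gamma_A : Z(A) \to Z(A)$ is characterized by $\gamma_A(b)a = ab$ for all $a \in A$, $b \in Z(A)$; since $b$ is central this forces $\gamma_A = \mathrm{id}_{Z(A)}$.

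The main step is (the functoriality) axiom, i.e. that $L$ preserves composition. Given composable morphisms $[{}_A^I P_B^J]$ and $[{}_B^{I'} P'_C^{J'}]$, their composite in $\mathrm{PIC}_{cat}$ is $[{}_A^{I''} P''_C^{J''}]$ with $I'' = \gamma_P(1_J 1_{I'})A$, $P'' = P \otimes_B P'$, $J'' = \gamma_{P'}^{-1}(1_J 1_{I'})C$ as in Proposition~\ref{composition}. I need $L([{}_A^{I''} P''_C^{J''}]) = L([{}_A^I P_B^J]) \circ L([{}_B^{I'} P'_C^{J'}])$ as morphisms in $\mathrm{ISOC}_{cat}$, i.e. $\gamma_{P \otimes_B P'}$ equals the composition-of-partial-isomorphisms of $\gamma_P$ and $\gamma_{P'}$. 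Unwinding the $\mathrm{ISO}_{cat}$-composition formula, this says that on the appropriate domain the composite $\gamma_P \circ \gamma_{P'}$ restricts to $\gamma_{P''}$, and that the domains match: $Z(J'') = \gamma_{P'}^{-1}(Z(J) Z(I'))$ (a unital ideal of $Z(C)$) and the codomain is $Z(I'') = \gamma_P(Z(J) Z(I'))$, using Remark~\ref{rem:unitalideal} ($Z(J) \cap Z(I') = Z(J)Z(I')$). The verification that $\gamma_{P''}(b)(p \otimes p') = (p \otimes p')b$ for $b$ in the common domain reduces, via the tensor structure, to $\gamma_{P''}(b)(p \otimes p') = p \otimes \gamma_{P'}(b)p'$ (moving $b$ across $P'$) $= p\gamma_{P'}(b) \otimes p'$... and then $= \gamma_P(\gamma_{P'}(b))p \otimes p'$ (moving $\gamma_{P'}(b) \in Z(J)$ across $P$, which is legitimate precisely because of the idempotent bookkeeping already carried out in the proof of Proposition~\ref{composition} and of Theorem~\ref{invcat}, equations \eqref{left}--\eqref{right}). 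Indeed this is essentially the same computation as \eqref{left}, so the main obstacle --- keeping track of which central idempotents make which tensor-slides well-defined --- has already been dispatched in the associativity argument for $\mathrm{PIC}_{cat}$; the present proof will simply invoke that bookkeeping and the uniqueness clause in Remark~\ref{remarkgamma} to conclude $\gamma_{P''}$ is the claimed composite. I would then remark that well-definedness of $L$ on $\sim$-equivalence classes is immediate, since isomorphic bimodules $P \cong P'$ induce the same $\gamma_P = \gamma_{P'}$ by the uniqueness in Remark~\ref{remarkgamma}.
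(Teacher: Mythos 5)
Your proposal is correct and follows essentially the same route as the paper: the heart of the matter is the composition check, which the paper also does by matching the ideals via $\gamma_P(Z(J)Z(I'))=\gamma_P(1_J1_{I'})Z(A)$ and $\gamma_{P'}^{-1}(Z(J)Z(I'))=\gamma_{P'}^{-1}(1_J1_{I'})Z(C)$ and then computing $\gamma_{P\otimes_B P'}(a)\,p\otimes p' = p\otimes p'a = p\otimes\gamma_{P'}(a)p' = p\gamma_{P'}(a)\otimes p' = (\gamma_P\circ\gamma_{P'})(a)\,p\otimes p'$, concluding by the uniqueness in Remark~\ref{remarkgamma}. Your additional explicit checks of (I1), (I2) and well-definedness on $\sim$-classes are points the paper leaves implicit, and they are handled correctly.
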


\begin{proof}
Take morphisms $g = [ {}_A^I P_B^J ]$ and $h = [ {}_B^{I'} P_C^{J'} ]$ 
in ${\rm PIC}_{cat}$. Then
\begin{displaymath}
	L(gh) = 
{}_{Z(A)}^{ \gamma_P(1_J 1_{I'}) Z(A) } 
\gamma_{{P \otimes_{B} P'}_{Z(C)}}^{ \gamma_{P'}^{-1}(1_{J} 1_{I'}) Z(C) }
\end{displaymath}
and
\begin{displaymath}
	L(g) L(h) =
{}_{Z(A)}^{ \gamma_P(Z(J)Z(I')) } 
\gamma_P|_{ Z(J)Z(I') } \circ 
\gamma_{P'}|_{\gamma_{P'}^{-1}(Z(J)Z(I') )_{Z(C)}^{\gamma_{P'}^{-1}(Z(J)Z(I'))}}.
\end{displaymath}
Note that
\begin{displaymath}
	\gamma_P( Z(J) Z(I') ) = \gamma_P( 1_J 1_{I'} ) Z(A)
\end{displaymath}
and
\begin{displaymath}
	\gamma_{P'}^{-1}(Z(J)Z(I'))= \gamma_{P'}^{-1}( 1_J 1_{I'} ) Z(C).
\end{displaymath}
Put 
	$\gamma_1 = \gamma_P|_{ Z(J)Z(I') }$
and
	$\gamma_2 = \gamma_{P'}|_{ \gamma_{P'}^{-1}(Z(J)Z(I'))}.$
If $a \in \gamma_{P'}^{-1}(1_J 1_{I'}) Z(C)$, $p \in P$ and $p' \in P'$, then
\begin{displaymath}
	\gamma_{P \otimes_B P'}(a) p \otimes p' =  
p \otimes p' a = 
p \otimes \gamma_2(a) p' =
p \gamma_2(a) \otimes p' =
(\gamma_1 \circ \gamma_2)(a) p \otimes p'.
\end{displaymath}
From Remark~\ref{remarkgamma} it therefore follows that 
$\gamma_{P \otimes_B P'} = \gamma_1 \circ \gamma_2$.
\end{proof}

\section{Epsilon-strongly groupoid graded rings}\label{sectionepsilonstrongly}

In this section, we recall the definition 
of groupoid graded rings and some of their properties.
Then we define epsilon-strongly groupoid graded rings
(see Definition~\ref{defepsilonstronglygradedring})
and provide a characterization of them which 
generalizes an analogous result for group graded rings
(see Proposition~\ref{epsilon}).
Throughout this section, $S$ denotes a ring which is graded by a small groupoid $G$.
Recall from \cite{lundstrom2004,lundstrom2006} that this means that there 
is a set of additive subgroups $\{ S_g \}_{g \in G}$ of
$S$ such that $S = \oplus_{g \in G} S_g$ and, for all $g,h \in G_1$, 
$S_g S_h \subseteq S_{gh}$, if $(g,h) \in G_2$,
and $S_g S_h = \{ 0 \}$, if $(g,h) \notin G_2$.
In that case, $S$ is called \emph{strongly graded} if for all
$(g,h) \in G_2$ the equality $S_g S_h = S_{gh}$ holds.
Given two $G$-graded rings $S$ and $T$, a ring homomorphism 
$f : S \rightarrow T$ is called \emph{graded} if for all $g\in G_1$
the inclusion $f(S_g) \subseteq T_g$ holds.
We put $R = \oplus_{e \in G_0} S_e$.
From the next result it follows that we may
always assume that $G_0$ is finite.

\begin{prop}\label{unit}
With the above notation, we get that $1_S \in R$.
If we put $G_0' = \{ e \in G_0 \mid (1_S)_e \neq 0 \}$ and
$G_1' = \{ g \in G_1 \mid (1_S)_{d(g)} , (1_S)_{c(g)} \neq 0 \}$,
then $G'$ is a subgroupoid of $G$ such that
$G_0'$ is finite and $S = \oplus_{g \in G_1'} S_g$.
\end{prop}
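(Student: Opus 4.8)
The plan is to prove the three assertions of Proposition~\ref{unit} in sequence, using only the defining properties of a $G$-graded ring together with the decomposition $1_S = \sum_{g\in G_1} (1_S)_g$ into homogeneous components (a finite sum, since $S = \oplus_{g\in G_1} S_g$).

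\emph{Step 1: $1_S \in R$.} First I would show that every homogeneous component of $1_S$ is supported on $G_0$. Take $s \in S_h$ for some $h \in G_1$. Writing $s = s 1_S = \sum_{g} s (1_S)_g$ and comparing homogeneous components, the component of $s 1_S$ in degree $h$ is $\sum_{g : (h,g)\in G_2,\ hg = h} s (1_S)_g$. The condition $(h,g)\in G_2$ with $hg = h$ forces $g = d(h) \in G_0$; hence $s = s\,(1_S)_{d(h)}$. Similarly, using $s = 1_S s$ and comparing degree-$h$ components, $s = (1_S)_{c(h)}\, s$. Now apply this with $s$ ranging over the homogeneous components $(1_S)_h$ of $1_S$ itself: for each $h$ in the support of $1_S$ we get $(1_S)_h = (1_S)_h (1_S)_{d(h)}$, and since products of components in non-composable or non-identity degrees vanish, summing over $h$ and comparing the degree-$e$ parts (for $e\in G_0$) shows that the only surviving terms are those with $h \in G_0$. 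More directly: from $1_S = \sum_h (1_S)_h$ and $1_S = 1_S\cdot 1_S = \sum_{(h,k)\in G_2}(1_S)_h(1_S)_k$, comparing the component in a degree $h_0 \notin G_0$ on both sides — the left side contributes $(1_S)_{h_0}$ while I will show the right side contributes something forcing $(1_S)_{h_0}=0$ via the one-sided identities just derived: $(1_S)_{h_0} = (1_S)_{c(h_0)}(1_S)_{h_0}(1_S)_{d(h_0)}$, and I can then iterate or argue directly that $(1_S)_{h_0}\in S_{c(h_0)}S_{h_0}S_{d(h_0)} \subseteq S_{h_0}$ gives no contradiction by itself, so the cleanest route is the comparison-of-components argument. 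I expect the efficient phrasing is: for $e \in G_0$, the degree-$e$ component of $1_S = 1_S^2$ reads $(1_S)_e = \sum_{(h,k)\in G_2,\ hk=e}(1_S)_h(1_S)_k$, and for $h \notin G_0$ one shows $(1_S)_h = (1_S)_h\,(1_S)_{d(h)}$ as above, whence an idempotent-type argument localizes all of $1_S$ in $R$.

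\emph{Step 2: $G'$ is a subgroupoid.} Here I would check that $G_0'$ and $G_1'$ are closed under the groupoid operations: $G_1'$ is closed under $g \mapsto g^{-1}$ because $d(g^{-1}) = c(g)$ and $c(g^{-1}) = d(g)$, so the defining condition is symmetric; closure under composition is immediate since $d(gh) = d(h)$ and $c(gh) = c(g)$, and if $g,h \in G_1'$ with $(g,h)\in G_2$ then $d(h)$ and $c(g)$ are already among the relevant objects (one also needs $c(h) = d(g)$ to have $(1_S)_{c(h)} = (1_S)_{d(g)} \neq 0$, which holds since $(g,h)$ composable and $h\in G_1'$ gives $(1_S)_{c(h)}\neq 0$); identities $\mathrm{id}_e$ for $e \in G_0'$ lie in $G_1'$ since $d(\mathrm{id}_e) = c(\mathrm{id}_e) = e$. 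Compatibility of the restricted inverse map with that of $G$ is built into the definition of $G_1'$, giving a genuine subgroupoid.

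\emph{Step 3: $G_0'$ finite and $S = \oplus_{g\in G_1'} S_g$.} Finiteness of $G_0'$ is immediate: $G_0' \subseteq \{e : (1_S)_e \neq 0\}$ and $1_S$, being an element of $S = \oplus_g S_g$, has only finitely many nonzero homogeneous components. For the last equality, I would show $S_g = 0$ for $g \notin G_1'$: if $g\notin G_1'$ then $(1_S)_{d(g)} = 0$ or $(1_S)_{c(g)} = 0$; using $1_S \in R$ from Step~1 and the one-sided-identity computations from Step~1 (every $s\in S_g$ satisfies $s = (1_S)_{c(g)}\, s = s\,(1_S)_{d(g)}$), either vanishing forces $s = 0$. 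Hence the only nonzero graded components of $S$ sit in degrees $g \in G_1'$, and $S = \oplus_{g\in G_1'}S_g$.

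\emph{Main obstacle.} The only genuinely delicate point is Step~1 — isolating the clean combinatorial argument that pins every homogeneous component of $1_S$ to a degree in $G_0$, given that $1_S$ is a central identity but the grading is only by a groupoid (so one must be careful about composability at every product). Once the two one-sided relations $s = (1_S)_{c(g)}s = s\,(1_S)_{d(g)}$ for homogeneous $s\in S_g$ are in hand, Steps 2 and 3 are routine bookkeeping.
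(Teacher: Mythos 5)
Your derivation of the one-sided relations is correct and is indeed the heart of the matter: for homogeneous $s\in S_g$, comparing the degree-$g$ components of $s\cdot 1_S$ and of $1_S\cdot s$ (composability together with $gh=g$ forces $h=d(g)$, and dually $h=c(g)$) gives $s = s\,(1_S)_{d(g)}$ and $s = (1_S)_{c(g)}\,s$. But Step 1 never actually reaches the conclusion $1_S\in R$. You yourself note that $(1_S)_{h_0} = (1_S)_{c(h_0)}(1_S)_{h_0}(1_S)_{d(h_0)}$ yields no contradiction, and the alternative you sketch --- comparing components of $1_S = 1_S^2$ --- does not obviously close either, since the degree-$h_0$ part of $\sum_{(h,k)\in G_2}(1_S)_h(1_S)_k$ contains cross terms $(1_S)_h(1_S)_k$ with $h,k\notin G_0$ and $hk=h_0$, which you cannot dismiss termwise. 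Ending with ``an idempotent-type argument localizes all of $1_S$ in $R$'' is a placeholder, not an argument, so as written the first (and, as you say, only delicate) assertion is unproved.

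The gap is easy to fill with the relations you already have, but by a different mechanism than componentwise comparison of $1_S^2$: put $r = \sum_{e\in G_0}(1_S)_e \in R$. For homogeneous $s\in S_g$ one has $s\,(1_S)_e = 0$ unless $e=d(g)$, so $sr = s\,(1_S)_{d(g)} = s$; by additivity $xr=x$ for every $x\in S$, i.e.\ $r$ is a right identity, and hence $r = 1_S\cdot r = 1_S$. Thus $1_S = r\in R$, equivalently $(1_S)_h=0$ for all $h\notin G_0$. With this inserted, your Steps 2 and 3 are fine as routine bookkeeping (Step 3 uses only the one-sided relations and the vanishing of $(1_S)_{d(g)}$ or $(1_S)_{c(g)}$). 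For comparison, the paper does not argue directly at all: it simply invokes Proposition 2.1.1 of Lundstr\"om's paper on groupoid graded modules, so a self-contained elementary proof along your lines is a legitimately different route --- but only once the missing step above is supplied.
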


\begin{proof}
This follows from \cite[Proposition 2.1.1]{lundstrom2004}.
\end{proof}

\begin{prop}\label{enough}
The ring $S$ is strongly graded if and only if 
for all $g \in G_1$ the inclusion $1_{S_{c(g)}} \in S_g S_{g^{-1}}$ holds.
\end{prop}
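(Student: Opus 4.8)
The plan is to prove the equivalence by unwinding the definition of strong grading and exploiting the fact that $1_S$ decomposes as a finite sum of the local identities $1_{S_e}$, $e \in G_0$ (this is the content of Proposition~\ref{unit}, which gives $1_S \in R$; by the direct sum decomposition $R = \oplus_{e\in G_0} S_e$, we may write $1_S = \sum_{e} 1_{S_e}$, where $1_{S_e}$ denotes the homogeneous component $(1_S)_e$, and one checks easily that each such component is a central idempotent of $S_e$, in fact the identity of $S_e$).

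For the forward direction, assume $S$ is strongly graded, i.e.\ $S_g S_h = S_{gh}$ for all $(g,h)\in G_2$. Applying this with $h = g^{-1}$ (which is composable with $g$ since $d(g) = c(g^{-1})$), we get $S_g S_{g^{-1}} = S_{g g^{-1}} = S_{c(g)}$. Since $1_{S_{c(g)}} \in S_{c(g)}$, the inclusion $1_{S_{c(g)}} \in S_g S_{g^{-1}}$ is immediate. This direction is essentially trivial.

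For the converse, assume $1_{S_{c(g)}} \in S_g S_{g^{-1}}$ for all $g \in G_1$, and let $(g,h) \in G_2$ be arbitrary. The inclusion $S_g S_h \subseteq S_{gh}$ always holds by definition of the grading, so it suffices to prove $S_{gh} \subseteq S_g S_h$. First I would observe that $c(gh) = c(g)$, so $1_{S_{c(gh)}} = 1_{S_{c(g)}}$ acts as a left identity on $S_{gh}$ (since $1_{S_{c(g)}}$ is the identity of $S_{c(g)}$ and $S_{c(g)} S_{gh} \subseteq S_{gh}$; one should verify it genuinely acts as the identity, which follows because $1_S$ acts as the identity and the off-diagonal components $1_{S_e}$ for $e \neq c(gh)$ annihilate $S_{gh}$ by the grading axiom $S_e S_{gh} = \{0\}$ when $(e, gh)\notin G_2$, i.e.\ when $e \neq c(gh)$). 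Hence, writing $1_{S_{c(g)}} = \sum_i a_i b_i$ with $a_i \in S_g$, $b_i \in S_{g^{-1}}$, any $x \in S_{gh}$ satisfies
\begin{displaymath}
x = 1_{S_{c(g)}}\, x = \sum_i a_i b_i x \in S_g \cdot (S_{g^{-1}} S_{gh}) \subseteq S_g S_h,
\end{displaymath}
where the last inclusion uses $(g^{-1}, gh) \in G_2$ (valid since $d(g^{-1}) = c(g) = c(gh)$) together with $g^{-1}(gh) = (g^{-1}g)h = d(g)\, h = h$, so $S_{g^{-1}} S_{gh} \subseteq S_{g^{-1}(gh)} = S_h$. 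Therefore $S_{gh} \subseteq S_g S_h$, giving equality.

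The main obstacle — really the only point requiring care rather than routine bookkeeping — is making rigorous the claim that $1_{S_{c(g)}}$ acts as a two-sided identity on the relevant homogeneous components, and more basically that the homogeneous components of $1_S$ in degrees $e \in G_0$ are precisely the multiplicative identities $1_{S_e}$ of the subrings $S_e$. This is standard for groupoid-graded rings (it is implicit in Proposition~\ref{unit} and \cite[Proposition 2.1.1]{lundstrom2004}), so I would either cite that directly or include a one-line verification: from $1_S = \sum_{e\in G_0}(1_S)_e$ and $S = \oplus S_g$, comparing degree-$e$ components of $1_S \cdot s = s = s \cdot 1_S$ for $s \in S_e$ shows $(1_S)_e$ is the identity of $S_e$, and comparing degree-$g$ components for $s \in S_g$ shows $(1_S)_{c(g)}$ and $(1_S)_{d(g)}$ act as left and right identities on $S_g$ respectively. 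Once this is in hand, the argument above goes through verbatim.
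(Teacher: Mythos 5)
Your proof is correct. The paper itself gives no argument here, deferring entirely to the citation \cite[Lemma 3.2]{lundstrom2006}; your self-contained argument (trivial forward direction, and for the converse writing $1_{S_{c(g)}}=\sum_i a_ib_i$ with $a_i\in S_g$, $b_i\in S_{g^{-1}}$ and using $S_{g^{-1}}S_{gh}\subseteq S_h$ together with the fact, following from Proposition~\ref{unit} and comparison of homogeneous components, that $(1_S)_{c(gh)}$ acts as a left identity on $S_{gh}$) is precisely the standard one underlying that cited lemma, so nothing further is needed.
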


\begin{proof}
This follows from \cite[Lemma 3.2]{lundstrom2006}.
\end{proof}

\begin{defi}\label{defepsilonstronglygradedring}
The ring $S$ is said to be \emph{epsilon-strongly graded by $G$}
if, for each $g \in G_1$, $S_g S_{g^{-1}}$ is a unital ideal of $S_{c(g)}$
such that for all $(g,h) \in G_2$
the equalities $S_g S_h = S_g S_{g^{-1}} S_{gh} = S_{gh} S_{h^{-1}} S_h$ hold.
\end{defi}

\begin{rem}
It follows from Proposition~\ref{unit} and Proposition~\ref{enough} that
if $S$ is strongly graded, then $S$ is epsilon-strongly graded.
\end{rem}

\begin{rem}
Suppose that $S$ is epsilon-strongly graded by $G$ and $g \in G_1$.
Then by the definition of $R,$ the $S_{c(g)}$-ideal 
$S_g S_{g^{-1}}$ is a unital ideal of $R.$ 
Moreover if $\epsilon_g$ is its multiplicative identity element, 
then for $r\in R$ we get that $\epsilon_g r , r \epsilon_g \in S_g S_{g^{-1}}$.
Therefore $\epsilon_g r = ( \epsilon_g r ) \epsilon_g =
\epsilon_g (r \epsilon_g ) = r \epsilon_g,$ which shows that 
$\epsilon_g \in Z(R),$ and $S_g S_{g^{-1}}=\epsilon_g S_{c(g)}=\epsilon_g R$. 
\end{rem}

We now wish to show an epsilon-analogue of Proposition~\ref{enough}.

\begin{prop}\label{epsilon}
The ring $S$ is epsilon-strongly graded by $G$ if and only if for each
$g \in G_1$
there is $\epsilon_g \in S_g S_{g^{-1}}$
such that for each $s \in S_g$ the equalities
$\epsilon_g s = s = s \epsilon_{g^{-1}}$ hold. 
\end{prop}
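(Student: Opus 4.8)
The plan is to prove the two implications separately. The forward direction is essentially immediate: if $S$ is epsilon-strongly graded, then by definition $S_g S_{g^{-1}}$ is a unital ideal of $S_{c(g)}$ for each $g \in G_1$, so we may take $\epsilon_g$ to be its multiplicative identity element. It remains to check that $\epsilon_g s = s = s\epsilon_{g^{-1}}$ for all $s \in S_g$. For the first equality, observe that $S_g = S_{c(g)} S_g \supseteq (S_g S_{g^{-1}}) S_g$, and in fact the identity $S_g S_{g^{-1}} S_g = S_g$ should follow from the epsilon-strong relations applied with a suitable pair of composable morphisms (e.g.\ taking $h$ so that $gh$ or relevant products collapse appropriately, or more simply using $S_g = S_{c(g)}S_g$ together with the fact that $\epsilon_g$ is the identity of the ideal $S_gS_{g^{-1}} = \epsilon_g S_{c(g)}$); since $\epsilon_g \in Z(R)$ acts as the identity on $S_g S_{g^{-1}} S_{c(g)}$-type products, left multiplication by $\epsilon_g$ fixes every element of $S_g$. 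Symmetrically, using the relation $S_g S_h = S_{gh} S_{h^{-1}} S_h$ with the roles adjusted (applied to the pair $(g^{-1})^{-1}, g^{-1}$ or to $h = g$ viewed inside $S_{g^{-1}}$), one gets $s = s\epsilon_{g^{-1}}$, noting $\epsilon_{g^{-1}}$ is the identity of $S_{g^{-1}}S_g \subseteq S_{d(g)}$.

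For the converse, assume that for each $g$ there is $\epsilon_g \in S_g S_{g^{-1}}$ with $\epsilon_g s = s = s\epsilon_{g^{-1}}$ for all $s \in S_g$. First I would show $\epsilon_g$ is a multiplicative identity for $S_g S_{g^{-1}}$: a generic element of $S_g S_{g^{-1}}$ is a sum of products $s t$ with $s \in S_g$, $t \in S_{g^{-1}}$; then $\epsilon_g (st) = (\epsilon_g s) t = st$, and $(st)\epsilon_g = s(t\epsilon_g)$, where $t\epsilon_g = t$ because $\epsilon_g = \epsilon_{(g^{-1})^{-1}}$ plays the role of the right identity for $S_{g^{-1}}$ (applying the hypothesis to $g^{-1}$ in place of $g$). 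Hence $S_g S_{g^{-1}}$ is unital with identity $\epsilon_g$. Next, $S_g S_{g^{-1}}$ is an ideal of $S_{c(g)}$: it is clearly a right $S_{c(g)}$-submodule and a left one since $S_{c(g)} S_g S_{g^{-1}} \subseteq S_g S_{g^{-1}}$; one can also see directly that $S_g S_{g^{-1}} = \epsilon_g S_{c(g)}$ is an ideal because $\epsilon_g$ is central in $S_{c(g)}$ — for $r \in S_{c(g)}$, writing $\epsilon_g = \sum s_i t_i$ with $s_i \in S_g, t_i \in S_{g^{-1}}$, we have $\epsilon_g r = \sum s_i (t_i r)$ and $r \epsilon_g = \sum (r s_i) t_i$, and one checks these agree using the idempotent/identity properties (alternatively appeal to the preceding Remark, which records $\epsilon_g \in Z(R)$ once we know $S_gS_{g^{-1}}$ is a unital ideal).

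It then remains to verify the product relations $S_g S_h = S_g S_{g^{-1}} S_{gh} = S_{gh} S_{h^{-1}} S_h$ for $(g,h) \in G_2$. For $S_g S_h \subseteq S_g S_{g^{-1}} S_{gh}$: any $s \in S_g$ satisfies $s = \epsilon_g s \in S_g S_{g^{-1}} S_g$ — wait, more to the point, for $s \in S_g$, $t \in S_h$ we write $st$; using $s = \epsilon_{?}\cdots$ we want to insert a factor landing in $S_{g^{-1}}$. The clean way: since $s = \epsilon_g s$ and $\epsilon_g \in S_g S_{g^{-1}}$, we have $st = \epsilon_g s t \in (S_g S_{g^{-1}}) S_g S_h \subseteq (S_g S_{g^{-1}}) S_{gh}$ because $S_g S_h \subseteq S_{gh}$. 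Conversely $S_g S_{g^{-1}} S_{gh} \subseteq S_g (S_{g^{-1}} S_{gh}) \subseteq S_g S_{g^{-1}gh} = S_g S_h$, using $(g^{-1}, gh) \in G_2$ and $g^{-1}(gh) = h$. The relation $S_g S_h = S_{gh} S_{h^{-1}} S_h$ is obtained symmetrically, using $t = t\epsilon_{h^{-1}}$ with $\epsilon_{h^{-1}} \in S_{h^{-1}}S_h$ and $(gh)(h^{-1}) = g$, so that $st = s t \epsilon_{h^{-1}} \in S_g S_h (S_{h^{-1}} S_h) \subseteq S_{gh}(S_{h^{-1}}S_h)$, and conversely $S_{gh} S_{h^{-1}} S_h \subseteq S_{(gh)h^{-1}} S_h = S_g S_h$.

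\textbf{Main obstacle.} The real subtlety — the place where the hypothesis is actually used nontrivially rather than formally — is establishing that $\epsilon_g$ is a genuine two-sided multiplicative identity for the ideal $S_gS_{g^{-1}}$ and that this ideal is idempotent-generated inside $S_{c(g)}$, i.e.\ correctly bookkeeping the interplay between the left-identity property of $\epsilon_g$ on $S_g$ and the right-identity property of $\epsilon_g$ on $S_{g^{-1}}$ (which is really the left/right-identity property of $\epsilon_{(g^{-1})^{-1}}$ obtained by applying the hypothesis at $g^{-1}$, together with $(g^{-1})^{-1} = g$). Everything else is a routine chain of inclusions using only $S_aS_b \subseteq S_{ab}$ for composable $(a,b)$ and the two groupoid identities $g^{-1}(gh) = h$, $(gh)h^{-1} = g$.
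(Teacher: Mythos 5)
Your proposal is correct and takes essentially the same route as the paper: the forward direction rests on the identity $S_g = S_gS_{g^{-1}}S_g$ (obtained from the epsilon-strong relation applied to a composable pair involving an identity morphism, e.g. $(c(g),g)$ or $(g,d(g))$, together with $1_S\in R$, which is exactly the paper's appeal to Proposition~\ref{unit}), and your converse is the same chain of inclusions obtained by inserting $\epsilon_g$ on the left and $\epsilon_{h^{-1}}$ on the right. One small caveat: the parenthetical ``more simply'' shortcut in your forward direction does not stand on its own, since $\epsilon_g$ is only the identity of the ideal $\epsilon_gS_{c(g)}$ and not of $S_{c(g)}$, so $S_g=S_{c(g)}S_g$ alone does not yield $\epsilon_g s=s$; you genuinely need $S_gS_{g^{-1}}S_g=S_g$, which your main route (and the paper) supplies.
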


\begin{proof}
First we show the ``only if'' statement.
Suppose that $S$ is epsilon-strongly graded.
Take $g \in G_1$. Let $\epsilon_g$ denote $1_{ S_g S_{g^{-1}} }$.
Take $s_g \in S_g$.
From Proposition~\ref{unit} it follows that $S_g S_{g^{-1}} S_g = S_g$.
Therefore there is a positive integer $n$ and
$a_g^{(i)} , c_g^{(i)} \in S_g$ and $b_{g^{-1}}^{(i)} \in S_{g^{-1}}$,
for $i \in \{1,\ldots,n\}$, such that
$s_g = \sum_{i=1}^n a_g^{(i)} b_{g^{-1}}^{(i)} c_g^{(i)}$.
Since $\epsilon_g = 1_{ S_g S_{g^{-1}} }$ and
$\epsilon_{g^{-1}} = 1_{ S_{g^{-1}} S_g }$, we get that
\begin{displaymath}
	\epsilon_g s_g = 
\sum_{i=1}^n ( \epsilon_g a_g^{(i)} b_{g^{-1}}^{(i)} ) c_g^{(i)} =
\sum_{i=1}^n a_g^{(i)} b_{g^{-1}}^{(i)} c_g^{(i)} = s_g
\end{displaymath}
and
\begin{displaymath}
	s_g \epsilon_{g^{-1}} = 
\sum_{i=1}^n a_g^{(i)} ( b_{g^{-1}}^{(i)} c_g^{(i)} \epsilon_{g^{-1}} ) =
\sum_{i=1}^n a_g^{(i)} b_{g^{-1}}^{(i)} c_g^{(i)} = s_g.
\end{displaymath}
Now we show the  ``if'' statement.
Suppose that to each
$g \in G_1$ there is $\epsilon_g \in S_g S_{g^{-1}}$
such that for each $s \in S_g$ the equalities
$\epsilon_g s = s = s \epsilon_{g^{-1}}$ hold. 
Take $(g,h) \in G_2$. Then, from Proposition~\ref{unit}, it follows that
\begin{displaymath}
	S_g S_h = 
\epsilon_g S_g S_h \subseteq 
S_g S_{g^{-1}} S_g S_h \subseteq 
S_g S_{g^{-1}} S_{gh} \subseteq 
S_g S_{d(g)h}=
S_g S_{c(h)h}=S_g S_{h}
\end{displaymath}
and
\begin{displaymath}
	S_g S_h = 
S_g S_h \epsilon_{ h^{-1} } \subseteq 
S_g S_h S_{h^{-1}} S_h \subseteq 
S_{gh}  S_{h^{-1}} S_h =
S_{gc(h)} S_h =
S_{gd(g)} S_h=
S_{g} S_h.
\end{displaymath}
Moreover, it is clear that $\epsilon_g$ is the multiplicative identity element of $ S_g S_{g^{-1}}.$
\end{proof}

\section{Examples of epsilon-strongly groupoid graded rings}\label{Sec:ExEpsGraded}

In this section we present some examples of epsilon-strongly groupoid graded rings.

\subsection{Partial skew groupoid rings}

The notion of a partial action of a groupoid on a ring, as well as the construction of the corresponding partial skew groupoid ring, is due to Bagio and Paques \cite{bagio2012}.

Let $G$ be a groupoid and suppose that  
$B$ is a ring which is the product of a collection of rings $\{ B_e \}_{e \in G_0}$.

\begin{defi}\label{defipartialaction}
A \emph{partial groupoid action of $G$ on $B$} is a collection
of maps $\{ \theta_g \}_{g \in G_1}$, 
where, for each $g \in G_1$, $B_g$ is an ideal of $B_{c(g)},$ $B_{c(g)}$ is an ideal of $B$ and
$\theta_g : B_{g^{-1}} \rightarrow B_g$ is a ring isomorphism
satisfying the following three axioms:
\begin{itemize}

\item[(G1)] if $e \in G_0$, then $\theta_e = {\rm id}_{B_e}$;

\item[(G2)] if $(g,h) \in G_2$, then 
$\theta\m_h ( B_{g^{-1}} \cap B_h ) = B_{(gh)\m}$;

\item[(G3)] if $(g,h) \in G_2$ and $x \in \theta\m_h ( B_{g^{-1}} \cap B_h )$,
then $\theta_g( \theta_h ( x ) ) = \theta_{gh} (x)$.

\end{itemize}
Note that conditions (G2) and (G3) are equivalent to the 
fact that $\theta_{gh}$ is an extension of $\theta_g \circ \theta_h.$ We say that $\theta$ is \emph{global} if  $\theta_{gh}=\theta_g \circ \theta_h,$ for each $(g,h) \in G_2.$
\end{defi}

\begin{defi}\label{defiunitalpartialaction}
Let $\{ \theta_g \}_{g \in G_1}$
be a partial groupoid action of $G$ on $B$.
Suppose that for each 
each $g \in G_1$, $B_g$ is unital, i.e. $B_g$ is generated 
by an idempotent $1_g$ which is central in $B_{c(g)},$ and $\theta_g$ is a monoid isomorphism.
In that case, we say that  $\{ \theta_g \}_{g \in G_1}$
is a \emph{unital  partial groupoid action of $G$ on $B$}.
\end{defi}

The {\it partial skew groupoid ring} $B \star_\theta G$,
associated with a unital partial groupoid action $\{ \theta_g \}_{g \in G_1}$
of $G$ on $B$,
is the set
of all finite formal sums $\sum_{g \in G_1} b_g \delta_g$, 
where $b_g \in B_g$, with addition defined componentwise and  
multiplication determined by the rule 
\begin{equation}\label{multiplicationrule}
(b_g \delta_g) (b_h' \delta_h) = b_g\alpha_g(  b_h' 1_{g^{-1}} ) \delta_{gh},
\end{equation}
if $(g,h) \in G^2$, and $(r_g \delta_g) (r_h' \delta_h) = 0$, otherwise.

There is a natural $G$-grading on $B \star_\theta G$.
Indeed, if we put $S_g = B_g \delta_g$ for each $g\in G_1$,
then $B \star_\theta G = \oplus_{g\in G_1} S_g$ is $G$-graded.
For each $g\in G$ the idempotent $1_g\delta_{c(g)}$ satisfies the conditions of  Proposition~\ref{epsilon}. Thus, $B \star_\theta G$ is an epsilon-strongly $G$-graded ring. Moreover, by \cite[Proposition 2.5]{BFP} one has that  $B \star_\theta G$ is strongly $G$-graded, if and only if, $\theta$ is global.

\subsection{Leavitt path algebras}


Let $E=(E^0,E^1,r,s)$ be a directed graph, consisting of two countable sets $E^0$, $E^1$ and maps $r,s : E^1 \to E^0$. The elements of $E^0$ are called \emph{vertices} and the elements of $E^1$ are called \emph{edges}. If both $E^0$ and $E^1$ are finite sets, then we say that $E$ is \emph{finite}.
A \emph{path} $\mu$ in 
$E$ is a sequence of edges $\mu = \mu_1 \ldots \mu_n$ such that $r(\mu_i)=s(\mu_{i+1})$ for $i\in \{1,\ldots,n-1\}$. In such a case, $s(\mu):=s(\mu_1)$ is the \emph{source} of $\mu$, $r(\mu):=r(\mu_n)$ is the \emph{range} of $\mu$ and $n$ is the \emph{length} of $\mu$.

\begin{defi}[\cite{hazrat2013}]
Let $E$ be any directed graph and let $K$ be a unital ring.
The \emph{Leavitt path algebra of $E$ with coefficients in $K$}, denoted by $L_K(E)$, is the
algebra generated by a set $\{v \mid v\in E^0\}$ of pairwise orthogonal idempotents, together with a set of elements $\{f \mid f\in E^1\} \cup
\{f^* \mid f\in E^1\}$, which satisfy the following  relations:
\begin{enumerate}
\item $s(f)f=fr(f)=f$, for all $f\in E^1$;
\item $r(f)f^*=f^*s(f)=f^*$, for all $f\in E^1$;
\item $f^*f'=\delta_{f,f'}r(f)$, for all $f,f'\in E^1$;
\item $v=\sum_{ \{ f\in E^1 \mid s(f)=v \} } ff^*$, for every $v\in E^0$ for which $s^{-1}(v)$ is non-empty and finite.
\end{enumerate}
Here the ring $K$ commutes with the generators.
\end{defi}

\begin{rem}
(a) Every path $\mu = \mu_1 \ldots \mu_n$ may be viewed as an element of $L_K(E)$.
Given such an element $\mu$, we put $\mu^* := \mu_n^* \ldots \mu_1^* \in L_K(E)$.
The element $\mu^*$ may also be thought of as a \emph{ghost path} in $E$,
as opposed to $\mu$ which is a \emph{real path}.

(b) Note that every element $x\in L_K(E)$ may be written on the form
$x=\sum_{i=1}^n k_i \alpha_i \beta_i^*$, for suitable $k_i \in K$ and (real) paths $\alpha_i$ and $\beta_i$ satisfying $r(\alpha_i)=r(\beta_i)$, for $i\in \{1,\ldots,n\}$.
\end{rem}

\subsubsection{The groupoid}

Based on $E$, we define a groupoid $G$ in the following way.
The objects of $G$ are the vertices of $E$, i.e. $G_0=E^0$.
An ordered pair of vertices, $(u,v)$, is an arrow in $G$
with $d(u,v)=v$ and $c(u,v)=u$,
if there is a path
\begin{displaymath}
\mu=\mu_1 \mu_2 \ldots \mu_n
\end{displaymath}
such that $u=s(\mu)=s(\mu_1)$ and $v=r(\mu)=r(\mu_n)$,
where $\mu_i \in E^1 \cup (E^1)^* \cup E^0$ for each $i\in \{1,2,\ldots,n\}$,
and $r(\mu_i)=s(\mu_{i+1})$ for each $i\in \{1,2,\ldots,n-1\}$.

Two arrows $(u,v)$ and $(v',w)$ and composable if only if $v'=v$.
In that case, their composition is defined to be equal to
\begin{displaymath}
	(u,v)(v,w)=(u,w).
\end{displaymath}

\subsubsection{The grading}
Let $W$ denote the set of finite real paths in $E$,
and consider $W$ as a subset of the ring $L_K(E)$.

\begin{lem}
If we, for each $(u,v) \in G_1$, put
\begin{displaymath}
	S_{(u,v)} = \mathrm{span}_K \{ \alpha\beta^* \mid \alpha,\beta \in W \text{ such that } s(\alpha)=u, r(\alpha)=r(\beta), s(\beta)=v \}
\end{displaymath}
then this turns $S = L_K(E) = \oplus_{(u,v)\in G_1} S_{(u,v)}$ into a $G$-graded ring.
\end{lem}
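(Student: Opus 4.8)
The plan is to recognize each homogeneous component $S_{(u,v)}$ as a Peirce component of $S = L_K(E)$ relative to the family $\{v \mid v \in E^0\}$ of pairwise orthogonal idempotents, and then to read the grading axioms off from the obvious multiplication rule for these idempotents.

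First I would establish the equality $S_{(u,v)} = u S v$ for every pair $(u,v) \in E^0 \times E^0$. The inclusion $S_{(u,v)} \subseteq uSv$ is immediate from the defining relations $s(f)f = f$ and $f^* s(f) = f^*$: if $\alpha,\beta \in W$ with $s(\alpha) = u$ and $s(\beta) = v$, then $u\alpha = \alpha$ and $\beta^* v = \beta^*$, so $\alpha\beta^* = u(\alpha\beta^*)v$. For the reverse inclusion I would use the standard description of elements of a Leavitt path algebra, namely that every $x \in S$ can be written as $x = \sum_{i=1}^n k_i \alpha_i \beta_i^*$ with $k_i \in K$ and $\alpha_i,\beta_i \in W$ satisfying $r(\alpha_i) = r(\beta_i)$; for such an $x$ one computes $uxv = \sum_i k_i (u\alpha_i)(\beta_i^* v) = \sum_{\{i \,:\, s(\alpha_i) = u,\ s(\beta_i) = v\}} k_i \alpha_i\beta_i^* \in S_{(u,v)}$. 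Summing this identity over all pairs $(u,v)$ gives $x = \sum_{(u,v) \in E^0 \times E^0} uxv$, a finite sum, so $S = \sum_{(u,v)\in E^0 \times E^0} S_{(u,v)}$.

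Next I would verify directness and that only components indexed by $G_1$ can be nonzero. If $\sum_j x_j = 0$ is a finite sum with $x_j \in S_{(u_j,v_j)} = u_j S v_j$ for pairwise distinct pairs $(u_j,v_j)$, then multiplying on the left by $u_k$ and on the right by $v_k$ and using orthogonality of the vertex idempotents forces $x_k = 0$; hence $S = \bigoplus_{(u,v) \in E^0 \times E^0} S_{(u,v)}$. Moreover, if $S_{(u,v)} \neq \{0\}$ then its spanning set is nonempty, so there are real paths $\alpha = \alpha_1\cdots\alpha_n$ and $\beta = \beta_1\cdots\beta_m$ in $W$ with $s(\alpha) = u$, $s(\beta) = v$ and $r(\alpha) = r(\beta)$; then the sequence $\alpha_1,\ldots,\alpha_n,\beta_m^*,\ldots,\beta_1^*$ is a path in the alphabet $E^1 \cup (E^1)^* \cup E^0$ from $u$ to $v$ of the type appearing in the definition of $G_1$ (with the evident conventions when $n=0$ or $m=0$), so $(u,v) \in G_1$. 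Consequently $S = \bigoplus_{(u,v) \in G_1} S_{(u,v)}$.

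Finally, the two multiplicative grading axioms follow directly from $S_{(u,v)} = uSv$. If $(g,h) = ((u,v),(v,w)) \in G_2$, then, since $v$ is idempotent, $S_g S_h = (uSv)(vSw) \subseteq uSw = S_{(u,w)} = S_{gh}$. If instead $g = (u,v)$ and $h = (v',w)$ with $v \neq v'$, so that $(g,h) \notin G_2$, then $S_g S_h = (uSv)(v'Sw) = \{0\}$ because $vv' = 0$. I do not expect a serious obstacle here; the only points that are not purely formal are the appeal to the standard spanning set of $L_K(E)$ and the observation that a monomial $\alpha\beta^*$ with matching ranges encodes a morphism of $G$ from $s(\beta)$ to $s(\alpha)$, together with the mild bookkeeping needed to cover the degenerate case of length-zero (vertex) paths.
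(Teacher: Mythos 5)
Your proof is correct, and it is organized around a key identification that the paper never makes explicit: you first prove $S_{(u,v)} = uSv$, i.e.\ you recognize the homogeneous components as Peirce components with respect to the orthogonal vertex idempotents, and then all three grading requirements (spanning, directness, and the product rules $S_gS_h \subseteq S_{gh}$, $S_gS_h=\{0\}$ for non-composable pairs) become formal consequences of $u\alpha=\alpha$, $\beta^*v=\beta^*$ and $vv'=\delta_{v,v'}v$, together with the standard monomial form of elements of $L_K(E)$. The paper instead works directly with the spanning monomials: it asserts the decomposition $L_K(E)=\sum_{(u,v)\in G_1}S_{(u,v)}$, checks directness by showing that any two distinct components intersect trivially (via $x=ux=ax$, so $ua\neq 0$ forces $u=a$, and similarly for the right-hand vertices), and states the multiplicative inclusions, which at the monomial level rest on the relation $f^*f'=\delta_{f,f'}r(f)$. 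Your route buys two things: the product inclusions require no monomial computation at all, and your independence argument (hit $\sum_j x_j=0$ with $u_k(\cdot)v_k$) genuinely establishes directness of the whole family, whereas a pairwise trivial-intersection check, as in the paper, is in general weaker than independence; the paper's gap is harmless because its idempotent trick extends exactly as you carried it out. You also spell out the point the paper leaves implicit, namely that a nonzero $S_{(u,v)}$ forces $(u,v)\in G_1$ because $\alpha\beta^*$ encodes a walk in the alphabet $E^1\cup(E^1)^*\cup E^0$ from $u$ to $v$, including the degenerate vertex-path case.
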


\begin{proof}
Clearly, $L_K(E) = \sum_{(u,v) \in G_1} S_{(u,v)}$
and this sum is in fact direct.
Indeed, take a non-zero $x \in S_{(u,v)} \cap S_{(a,b)}$.
Then $x=ux$ and $x=ax$, and hence
$0 \neq x = ux = u(ax)=(ua)x$. In particular, $ua\neq 0$ which means that $u=a$.
Similarly, we may conclude that $v=b$. That is, $(u,v)=(a,b)$.

Let $(u,v),(v',w)\in G_1$ be arbitrary.
If $v'=v$ then we get that
$S_{(u,v)} S_{(v,w)} \subseteq S_{(u,w)}$.
On the other hand, if $v'\neq v$, then $S_{(u,v)} S_{(v',w)} = \{0\}$.
This shows that $L_K(E)$ is $G$-graded.
\end{proof}

\begin{thm}\label{thm:epsgradLPA}
If $E$ is a finite graph,
then the Leavitt path algebra $S=L_K(E)$ is epsilon-strongly $G$-graded.
\end{thm}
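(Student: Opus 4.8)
The plan is to use the characterization of epsilon-strong gradings given in Proposition~\ref{epsilon}: for each arrow $(u,v) \in G_1$ it suffices to exhibit an element $\epsilon_{(u,v)} \in S_{(u,v)} S_{(v,u)}$ which acts as a left identity on $S_{(u,v)}$ and (with the roles reversed) $\epsilon_{(v,u)}$ acting as a right identity, i.e.\ $\epsilon_{(u,v)} s = s = s \epsilon_{(v,u)}$ for all $s \in S_{(u,v)}$. So the first step is to understand $S_{(u,v)} S_{(v,u)}$: by definition this is spanned by products $(\alpha \beta^*)(\gamma \delta^*)$ with $s(\alpha) = u$, $r(\alpha) = r(\beta)$, $s(\beta) = v$, $s(\gamma) = v$, $r(\gamma) = r(\delta)$, $s(\delta) = u$, and one uses relations (1)--(3) of the Leavitt path algebra to see these land in $S_{(u,u)}$, the ``diagonal'' component sitting over the vertex $u$.

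Second, I would produce the idempotents explicitly. The natural candidate for $\epsilon_{(u,v)}$ is a sum of terms $\alpha \alpha^*$ over a suitable finite set of paths $\alpha$ with $s(\alpha) = u$ whose ranges exhaust the vertices $w$ that can be reached from $u$ by a path that also reaches $v$ — more precisely one wants $\epsilon_{(u,v)}$ to be the identity on the finitely many ``building blocks'' $\alpha \beta^*$ appearing in $S_{(u,v)}$. Here finiteness of $E$ is essential: there are only finitely many vertices, and crucially the Leavitt relation (4) lets one rewrite $v = \sum_{s(f) = v} f f^*$ so that a vertex idempotent can be pushed along edges; iterating, and using finiteness to guarantee the process involves only finitely many paths, one can build an idempotent $\epsilon_{(u,v)} \in S_{(u,v)} S_{(v,u)}$ of the form $\sum_i \alpha_i \alpha_i^*$ that fixes every $\alpha \beta^* \in S_{(u,v)}$ on the left. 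The key computation is then the routine check that $(\sum_i \alpha_i \alpha_i^*)(\alpha \beta^*) = \alpha \beta^*$, which reduces to orthogonality relations among the $\alpha_i^* \alpha$ together with the defining sum relation at the relevant vertices; symmetrically $\epsilon_{(v,u)}$ works as a right identity, and one notes $\epsilon_{(u,v)}$ and $\epsilon_{(v,u)}$ are swapped under the involution $*$.

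Third, once the family $\{\epsilon_{(u,v)}\}$ is in hand, Proposition~\ref{epsilon} immediately yields that $S = L_K(E)$ is epsilon-strongly $G$-graded, with $\epsilon_{(u,v)}$ being the multiplicative identity of the unital ideal $S_{(u,v)} S_{(v,u)}$ of $S_{(u,u)}$.

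The main obstacle I anticipate is the bookkeeping in the second step: getting the right finite set of paths $\{\alpha_i\}$ so that $\sum_i \alpha_i \alpha_i^*$ is simultaneously an element of $S_{(u,v)} S_{(v,u)}$ (not just of the diagonal over $u$) and a left identity on all of $S_{(u,v)}$. One has to be careful about vertices $v$ that emit no edges versus those that emit finitely many (where relation (4) applies), and about the fact that a general element $\alpha \beta^*$ of $S_{(u,v)}$ has $\alpha, \beta$ of unrelated lengths, so a single ``level'' of paths will not do — one needs the idempotent to dominate all the $\alpha \alpha^*$ occurring. I expect the cleanest route is to first reduce, using relation (4), to a spanning set of $S_{(u,v)}$ in which $\alpha$ ranges over paths ending at a sink or at a vertex reachable from $v$, observe this set is finite because $E$ is finite (so there is a bound on the lengths that can occur without being rewritten), and then take $\epsilon_{(u,v)}$ to be the sum of $\alpha \alpha^*$ over exactly those $\alpha$.
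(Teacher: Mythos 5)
Your overall strategy is the same as the paper's: invoke Proposition~\ref{epsilon}, build each $\epsilon_{(u,v)}$ as a finite sum $\sum_i \alpha_i\alpha_i^*$ lying in $S_{(u,v)}S_{(v,u)}$, check the left identity property on monomials $\gamma\delta^*$, and get the right identity by applying the involution. The genuine gap is your finiteness step. You claim that relation (4) lets one pass to a spanning set of $S_{(u,v)}$ in which the real paths $\alpha$ have bounded length, so that only finitely many $\alpha\alpha^*$ need to be dominated. Relation (4) replaces a vertex by $\sum_{s(f)=v} ff^*$; it never shortens real paths, and if $E$ contains a cycle there is no bound at all. Concretely, take $E$ with vertices $u,w$, a loop $f$ at $u$ and an edge $g\colon u\to w$ (so $w$ is a sink). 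Then $S_{(u,w)}=\mathrm{span}_K\{f^ng : n\geq 0\}$, none of these monomials can be rewritten to shorter ones, and $S_{(u,w)}S_{(w,u)}=\mathrm{span}_K\{f^n gg^*(f^*)^m : n,m\geq 0\}$. Since $f^*g=g^*f=0$, the elements $f^ngg^*(f^*)^m$ multiply like matrix units, so a finite sum $\epsilon=\sum c_{nm}f^ngg^*(f^*)^m$ satisfies $\epsilon\, f^kg=\sum_n c_{nk}f^ng$, and no finite choice of coefficients yields $\epsilon\, f^kg=f^kg$ for every $k$. Hence no element of the form $\sum_i\alpha_i\alpha_i^*$ (indeed no element of $S_{(u,w)}S_{(w,u)}$ at all) can serve as $\epsilon_{(u,w)}$, so the construction you propose cannot be completed at this arrow; the obstacle is not bookkeeping but a genuine failure of the finiteness you need.

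It is worth noting that this is exactly the delicate point in the paper's own argument as well: there the finiteness is claimed for the set $M_{(u,v)}$ of $\leq$-minimal elements of $P_{(u,v)}$, and in the example above the minimal elements of $P_{(u,w)}$ are all of the paths $f^ng$, $n\geq 0$, so that claim also requires more than finiteness of $E$ (for instance, absence of cycles, or some other restriction on the pair $(u,v)$ beyond the case distinction $u\in S_{(u,v)}S_{(v,u)}$). In any case, the specific reduction you rely on, namely bounding path lengths via relation (4), is not available, so your proof as written has a gap that cannot be repaired without changing the hypotheses or the construction of $\epsilon_{(u,v)}$.
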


\begin{proof}
Let $(u,v) \in G_1$ be arbitrary.

If $u \notin S_{(u,v)} S_{(v,u)}$,
then we shall be interested in the following set:
\begin{displaymath}
	P_{(u,v)} = \{ \alpha \mid \alpha \beta^* \in S_{(u,v)} \}.
\end{displaymath}
For $\alpha_i, \alpha_j \in P_{(u,v)}$, we write $\alpha_i \leq \alpha_j$ if $\alpha_i$ is an initial subpath of $\alpha_j$.
Clearly, $\leq$ is a partial order on $P_{(u,v)}$.
Moreover, using that $E$ is finite, it is not difficult to see that there can only be a finite number of minimal elements of $P_{(u,v)}$ with respect to $\leq$.
We collect all such minimal elements in the set $M_{(u,v)} = \{\alpha_1,\ldots,\alpha_k\}$.

We are now ready to define $\epsilon_{(u,v)}$ in the following way:

\begin{displaymath}
	\epsilon_{(u,v)} = \left\{
	\begin{array}{ll}
		u & \text{if } u \in S_{(u,v)} S_{(v,u)}\\
		\sum_{\alpha_j \in M_{(u,v)}} \alpha_j \alpha_j^* & \text{otherwise}
	\end{array}
	\right.
\end{displaymath}
Note that, whenever $\alpha \beta^* \in S_{(u,v)}$ is a non-zero monomial, i.e. $r(\alpha)=r(\beta)$,
we get that
$\alpha \alpha^* = \alpha r(\beta) \alpha^* = \alpha \beta^* \beta \alpha^* \in S_{(u,v)} S_{(v,u)}$.
In particular, $\alpha_j \alpha_j^* \in S_{(u,v)} S_{(v,u)}$ for each $\alpha_j \in M_{(u,v)}$.
Hence, by construction, $\epsilon_{(u,v)} \in S_{(u,v)} S_{(v,u)}$.
Moreover, $(\epsilon_{(u,v)})^* = \epsilon_{(u,v)}$.

Take any monomial $\gamma \delta^* \in S_{(u,v)}$.
First we show that $\epsilon_{(u,v)} \gamma \delta^* = \gamma \delta^*$.

\noindent \textbf{Case 1:} ($u \in S_{(u,v)} S_{(v,u)}$)

Clearly,
$\epsilon_{(u,v)} \gamma \delta^* = u \gamma \delta^* = \gamma \delta^*$.

\noindent \textbf{Case 2:} ($u \notin S_{(u,v)} S_{(v,u)}$)

Note that there is some $\alpha' \in M_{(u,v)}$ such that $\gamma = \alpha'\gamma'$.
Thus,
\begin{align*}
	\epsilon_{(u,v)} \gamma \delta^* &=  \left(\alpha'\alpha'^*+ \sum_{\alpha_j \in M_{(u,v)} \setminus \{\alpha'\}} \alpha_j \alpha_j^* \right) \gamma \delta^* \\
	&= \alpha' \alpha'^* \alpha' \gamma' \delta^* + 0 = \alpha' \gamma' \delta^* = \gamma \delta^*.
\end{align*}

\noindent It remains to show that
$\gamma \delta^* \epsilon_{(v,u)} = \gamma \delta^*$.
Note that
\begin{displaymath}
	\gamma \delta^* \in S_{(u,v)} \Longleftrightarrow \delta \gamma^* \in S_{(v,u)}.
\end{displaymath}
It follows, from Case 1 and Case 2, that
$\epsilon_{(v,u)} \delta \gamma^* = \delta \gamma^*$.
Using this we get that
\begin{displaymath}
	\gamma \delta^* \epsilon_{(v,u)} =
	\gamma \delta^* (\epsilon_{(v,u)})^* =
	(\epsilon_{(v,u)} \delta \gamma^*)^* =
	(\delta \gamma^*)^*
	= \gamma \delta^*.
\end{displaymath}
This concludes the proof.
\end{proof}

In general $L_K(E)$ need not be strongly $G$-graded, as the following example shows.

\begin{exa}
Let $K$ be a unital ring
and consider the Leavitt path algebra $L_K(E)$
associated with the following graph $E$:
\begin{displaymath}
	\xymatrix{
	\bullet_{v_1} & \ar[l]_{f_1} \bullet_{v_2}  \ar[r]^{f_2} & \bullet_{v_3}
	}
\end{displaymath}
A few short calculations reveal that
\begin{itemize}
	\item $S_{(v_2,v_1)} S_{(v_1,v_2)} = K f_1 f_1^*$
	\item $S_{(v_1,v_2)} S_{(v_2,v_1)} = K v_1$
	\item $S_{(v_3,v_2)} S_{(v_2,v_3)} = K v_3$
	\item $S_{(v_2,v_3)} S_{(v_3,v_2)} = K f_2 f_2^*$
	\item $S_{(v_3,v_1)} S_{(v_1,v_3)} = \{0\}$
	\item $S_{(v_1,v_3)} S_{(v_3,v_1)} = \{0\}$
\end{itemize}
and we may choose
\begin{itemize}
	\item $\epsilon_{(v_2,v_1)} = f_1 f_1^*$
	\item $\epsilon_{(v_1,v_2)} = v_1$
	\item $\epsilon_{(v_3,v_2)} = v_3$
	\item $\epsilon_{(v_2,v_3)} = f_2 f_2^*$
	\item $\epsilon_{(v_1,v_3)} = \epsilon_{(v_3,v_1)} = 0$.
\end{itemize}
Clearly, 
$(v_1,v_3)$ and $(v_3,v_1)$ are composable,
but
$\{0\} = S_{(v_1,v_3)} S_{(v_3,v_1)} \neq S_{(v_1,v_1)}$.
Thus, $L_K(E)$ is not strongly $G$-graded.
\end{exa}

\begin{rem}
Gon\c{c}alves and Yoneda \cite{GoncalvesYoneda2016}
have shown that each Leavitt path algebra may be viewed as
a partial skew groupoid ring.
Their observation gives rise to another example of
an epsilon-strong groupoid grading on a Leavitt path algebra.
\end{rem}

\subsection{Morita rings}

Let $(A,B, _AM_B, _BN_A, \varphi, \phi)$ be a strict Morita context.
It consists of unital rings $A$ and $B$,
an $A$-$B$-bimodule $M$, a $B$-$A$-bimodule $N$,
an $A$-$A$-bimodule epimorphism $\varphi : M \otimes_B N \to A$
and a $B$-$B$-bimodule epimorphism $\phi : N \otimes_A M \to B$.

The associated \emph{Morita ring} 
is the set
\begin{displaymath}
	S =
	\left(
	\begin{array}{cc}
		A & M \\
		N & B
	\end{array}
	\right)
\end{displaymath}
 with the natural addition and with a multiplication defined by
\begin{displaymath}
		\left(
	\begin{array}{cc}
		a_1 & m_1 \\
		n_1 & b_1
	\end{array}
	\right)
	*
		\left(
	\begin{array}{cc}
		a_2 & m_2 \\
		n_2 & b_2
	\end{array}
	\right)
	=
		\left(
	\begin{array}{cc}
		a_1a_2 + \varphi(m_1 \otimes n_2) & a_1m_2 + m_1 b_2\\
		n_1a_2 + b_1n_2 & \phi(n_1 \otimes m_2) + b_1b_2
	\end{array}
	\right)
\end{displaymath}
for $a_1,a_2\in A$, $b_1,b_2\in B$, $m_1,m_2\in M$ and $n_1,n_2\in N$.
Let $G$ be a group and $I$ a non-empty set. Then the set $I\times G\times I$ considered as morphisms, where the composition is given by the rule 
\begin{displaymath}
	(i,g,j)(j,h,k)=(i, gh,k),
\end{displaymath}
for all $i,j,k\in I$ and $g,h\in G$, is a groupoid. Using this groupoid and taking $I=\{1,2\}$ and $G$ the infinite cyclic group generated by $g$
we can define a grading on $S$ by putting
\begin{displaymath}
	S_{(1,e, 1)} =
	\left(
	\begin{array}{cc}
		A & 0 \\
		0 & 0
	\end{array}
	\right),
	\quad
S_{(2,e, 2)} =
\left(
	\begin{array}{cc}
		0 & 0 \\
		0 & B
	\end{array}
	\right),
\end{displaymath}\begin{displaymath}
		S_{(1,g, 2)} =
	\left(
	\begin{array}{cc}
		0 & M \\
		0 & 0
	\end{array}
	\right),
\quad	
		S_{(2,g\m, 1)} =
	\left(
	\begin{array}{cc}
		0 & 0 \\
		N & 0
	\end{array}
	\right)
\end{displaymath}
and $S_{(i,h, j)}=\left\{\left(\begin{smallmatrix}0&0\\0&0\end{smallmatrix}\right)\right\},$ in any other case.
 Then for $h \in G \setminus \{e,g,g^{-1}\}$ we have that 
\begin{displaymath}
	S_{(1,h\m, 1)} S_{(1,h, 1)}\neq S_{(1,e,1)}
\end{displaymath}
and thus $S$ is not strongly graded.
However, $S$ is epsilon-strongly graded. Indeed, it is easy to see that
\begin{displaymath}
	S_{(1,g, 2)} S_{(2,g\m, 1)}
	=
		\left(
	\begin{array}{cc}
		\image(\varphi) & 0 \\
		0 & 0
	\end{array}
	\right) =\left(
	\begin{array}{cc}
		A & 0 \\
		0 & 0
	\end{array}
	\right)
\end{displaymath}
and
\begin{displaymath}
S_{(2,g\m, 1)} S_{(1,g, 2)}
	=
		\left(
	\begin{array}{cc}
		0 & 0 \\
		0 & \image(\phi) 
	\end{array}
	\right)	=\left(
	\begin{array}{cc}
		0  & 0 \\
		0 & B
	\end{array}
	\right).
\end{displaymath}
If we put
\begin{displaymath}
\epsilon_{(1,e,1)}=	\epsilon_{(1,g,2)}
	=
		\left(
	\begin{array}{cc}
		1_A & 0 \\
		0 & 0
	\end{array}
	\right)
	\quad
	\text{ and }
	\quad
	\epsilon_{(2,e,2)}=\epsilon_{(2,g\m,1)} 
	=
		\left(
	\begin{array}{cc}
		0 & 0 \\
		0 & 1_B
	\end{array}
	\right),	
\end{displaymath}
then, by Proposition \ref{epsilon} this yields an epsilon-strong $(I\times G\times I)$-grading on $S$.

\section{Epsilon-strongly groupoid graded modules}\label{sectionepsilonmodules}

In this section,
we define epsilon-strongly groupoid  graded modules
(see Definition~\ref{definitionepsilonmodules}) and we 
provide a characterization of them 
(see Proposition~\ref{tensorg}) that generalizes
a result \cite[Theorem I.3.4]{nas1982} previously obtained for strongly group graded modules.
At the end of this section, we show (see Proposition~\ref{tensor})
that the multiplication maps
$m_{g,h} : S_g \otimes_R S_h \rightarrow \epsilon_g S_{gh} = S_{gh} \epsilon_{h^{-1}}$,
for $(g,h) \in G_2$, are $R$-bimodule isomorphisms.
In particular this implies that for every $g \in G_1$, the sextuple
\begin{displaymath}
	( \epsilon_g R , \epsilon_{g^{-1}}R , S_g , S_{g^{-1}} , m_{g,g^{-1}} , m_{g^{-1},g} )
\end{displaymath}
is a set of equivalence data.
Throughout this section, $S$ denotes a ring 
which is graded by a small groupoid $G$,
and we put $R = \oplus_{e \in G_0} S_e$.
For the entirety of this section also let 
$M$ be a graded left (right) $S$-module.
Recall that this means that there 
to each $g \in G_1$ is an additive subgroup $M_g$ of $M$
such that $M = \oplus_{g \in G_1} M_g$, as additive groups,
and for all $g,h \in G_1$, the inclusion
$S_g M_h \subseteq M_{gh}$ 
(or $M_g S_h \subseteq M_{gh}$)
holds, if $(g,h) \in G_2$, and
$S_g M_h = \{ 0 \}$
(or $M_g S_h = \{ 0 \}$), otherwise.
Recall that $M$ is called \emph{strongly graded} if for all $(g,h) \in G_2$
the equality $S_g M_h = M_{gh}$ (or $M_g S_h = M_{gh}$) holds. 

\begin{defi}\label{definitionepsilonmodules}
We say that $M$ is \emph{epsilon-strongly graded} if, for each 
$g \in G_1$, $S_g S_{g^{-1}}$ is a unital ideal of $S_{c(g)}$
such that for all $(g,h) \in G_2$ the equality
$S_g M_h = S_g S_{g^{-1}} M_{gh}$ 
($M_g S_h = M_{gh} S_{h^{-1}} S_h$) holds.
\end{defi}

\begin{prop}\label{tensorg}
The following assertions are equivalent:
\begin{itemize}

\item[(a)] The ring $S$ is epsilon-strongly graded;

\item[(b)] Every graded left $S$-module is 
epsilon-strongly graded;

\item[(c)] Every graded right $S$-module is
epsilon-strongly graded.

\end{itemize}
\end{prop}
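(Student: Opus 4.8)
The plan is to prove a cycle of implications $(a) \Rightarrow (b) \Rightarrow (a)$ and $(a) \Rightarrow (c) \Rightarrow (a)$, with the implications $(b) \Rightarrow (a)$ and $(c) \Rightarrow (a)$ being immediate by specialization. Indeed, for the latter, note that $S$ is itself a graded left $S$-module with $S_g = M_g$, so applying (b) to $M = S$ recovers the defining equality $S_g S_h = S_g S_{g^{-1}} S_{gh}$ for $(g,h) \in G_2$; the analogous argument works for $(c)$ using $S$ as a graded right module, together with the fact (recorded just before Proposition~\ref{epsilon}) that epsilon-strong gradedness is symmetric in $S_g S_{g^{-1}}$ and $S_{g^{-1}} S_g$ once one notes $S_g S_h = S_{gh} S_{h^{-1}} S_h$ follows as well. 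So the real content is $(a) \Rightarrow (b)$ (and symmetrically $(a) \Rightarrow (c)$, which I would handle by the mirror-image argument).

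For $(a) \Rightarrow (b)$, assume $S$ is epsilon-strongly graded and let $M$ be a graded left $S$-module. Let $\epsilon_g = 1_{S_g S_{g^{-1}}}$ as in Proposition~\ref{epsilon}. The key step is to show that for each $g \in G_1$, $\epsilon_g$ acts as a left identity on $M_g$, i.e. $\epsilon_g m = m$ for all $m \in M_g$. Once this is established, the desired equality follows quickly: for $(g,h) \in G_2$ we get
\begin{displaymath}
	S_g M_h = \epsilon_g S_g M_h \subseteq S_g S_{g^{-1}} S_g M_h \subseteq S_g S_{g^{-1}} M_{gh},
\end{displaymath}
using $S_g S_{g^{-1}} S_g = S_g$ (which is part of the epsilon-strong hypothesis, via Proposition~\ref{unit}) and $S_g M_h \subseteq M_{gh}$; conversely $S_g S_{g^{-1}} M_{gh} \subseteq S_g (S_{g^{-1}} M_{gh}) \subseteq S_g M_{d(g)h} = S_g M_{c(h)h} = S_g M_h$ since $S_{g^{-1}} M_{gh} \subseteq M_{g^{-1}gh} = M_{d(g)h}$. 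Combining the two inclusions gives $S_g M_h = S_g S_{g^{-1}} M_{gh}$, which is exactly Definition~\ref{definitionepsilonmodules}.

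The main obstacle is therefore proving $\epsilon_g m = m$ for $m \in M_g$. Here is how I would do it. Fix $g \in G_1$ and $m \in M_g$. Write $e = c(g)$, so $m \in M_g \subseteq M$ and $1_{S_e} m \in M_e \cap (\text{stuff})$ — more carefully, I would use that $S_g = S_g S_{g^{-1}} S_g$ to produce a presentation. Actually the cleaner route: I would first show $M_g = S_g S_{g^{-1}} M_g = \epsilon_g M_g$ by arguing that $M_g \supseteq S_g M_{g^{-1}g}$ need not help directly; instead observe that since $1_S \in R = \oplus_{e \in G_0} S_e$ (Proposition~\ref{unit}), writing $1_S = \sum_e 1_{S_e}$, and $m = 1_S m = \sum_e 1_{S_e} m$; comparing graded components in $M = \oplus_g M_g$ forces $m = 1_{S_{c(g)}} m$ since $1_{S_e} m \in M_{eg}$ which is nonzero only when $e = c(g)$. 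So $m = 1_{S_{c(g)}} m$. Now $\epsilon_g \in S_g S_{g^{-1}} \subseteq S_{c(g)}$ is a central idempotent of $S_{c(g)}$ (Remark preceding Proposition~\ref{epsilon}), and I claim $\epsilon_g m = m$: write $m = 1_{S_{c(g)}} m$ and decompose $1_{S_{c(g)}} = \epsilon_g + (1_{S_{c(g)}} - \epsilon_g)$; it remains to show $(1_{S_{c(g)}} - \epsilon_g) m = 0$. For this I would use the complementary idempotent together with the grading relation $S_g = \epsilon_g S_g$ (Proposition~\ref{epsilon}): since $m \in M_g$ and the grading of $M$ over $S$ is compatible, every element of $M_g$ is a sum of elements of the form $s m'$ with $s \in S_g$, $m' \in M_{g^{-1}g} = M_{d(g)}$ — this uses a strong-type decomposition which we do \emph{not} have in general, so instead I would fall back on: $m \in M_g$, and $S_{g^{-1}} m \subseteq M_{g^{-1}g} = M_{d(g)}$, then $S_g S_{g^{-1}} m \subseteq M_g$, and one shows $m \in S_g S_{g^{-1}} m$ using that $S_g S_{g^{-1}}$ is unital with identity $\epsilon_g$ and that $\epsilon_g$ acts as identity — this is circular, so the correct fix is to use the other equality $M_g = M_g S_{d(g)}$-type relation is also unavailable. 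The honest resolution: I expect the paper proves $\epsilon_g m = m$ by first establishing, via Proposition~\ref{tensor}'s precursor, that $M_g$ is a unital module over $\epsilon_g R = S_g S_{g^{-1}}$, concretely $\epsilon_g m = (\sum a_i b_i) m$ where $\sum a_i b_i = \epsilon_g$ with $a_i \in S_g$, $b_i \in S_{g^{-1}}$, and then $\epsilon_g m = \sum a_i (b_i m)$ with $b_i m \in M_{d(g)}$, and separately $m = \epsilon_{?}$... — I would structure the final write-up around the identity $\epsilon_g m = m$ proven by: $m = 1_{S_{c(g)}} m$ and $1_{S_{c(g)}} \epsilon_g = \epsilon_g$, reducing to showing the idempotent $\epsilon_g$ fixes $M_g$, which follows because $M_g \subseteq S_{c(g)} M_g = (S_g S_{g^{-1}} \oplus (1-\epsilon_g)S_{c(g)}) M_g$ and the $(1-\epsilon_g)S_{c(g)} M_g$ component, being annihilated by $S_g$ on suitable sides, must vanish by a degree argument. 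This degree-chasing argument is the technical heart, and I would present it carefully; everything downstream is formal.
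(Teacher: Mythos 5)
Your two displayed inclusion chains already constitute a complete proof of (a) $\Rightarrow$ (b), and they are in substance exactly the paper's argument: the paper simply writes
\[
S_gS_{g^{-1}}M_{gh} \subseteq S_gM_{g^{-1}gh} = S_gM_h = S_gS_{g^{-1}}S_gM_h \subseteq S_gS_{g^{-1}}M_{gh},
\]
whose only inputs are the ring-level identity $S_g=\epsilon_gS_g\subseteq S_gS_{g^{-1}}S_g$ (Proposition~\ref{epsilon}) and the module-grading inclusions $S_{g^{-1}}M_{gh}\subseteq M_h$ and $S_gM_h\subseteq M_{gh}$; the mirror-image computation gives (a) $\Rightarrow$ (c), and (b) $\Rightarrow$ (a), (c) $\Rightarrow$ (a) are obtained, as you say and as the paper also asserts, by specializing to $M=S$.

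The problem is your framing: you declare that the ``key step'' and ``technical heart'' is the statement $\epsilon_g m = m$ for all $m\in M_g$, and you never prove it (your closing paragraph is, by your own admission, circular and unresolved). That statement is not used anywhere in your chain --- there you only need $\epsilon_g s = s$ for $s\in S_g$, which is a fact about the ring $S$ --- and, more importantly, it is false in general. Definition~\ref{definitionepsilonmodules} requires only $S_gM_h=S_gS_{g^{-1}}M_{gh}$, not that $\epsilon_g$ fixes $M_g$. Concretely, let $G$ be a nontrivial group viewed as a one-object groupoid, let $S=S_e$ be any unital ring concentrated in degree $e$ (this is epsilon-strongly graded, with $\epsilon_g=0$ for all $g\neq e$), and grade $M=S$ as a left $S$-module by $M_g=S$ for one fixed $g\neq e$ and $M_h=\{0\}$ otherwise; then $M$ is a graded (indeed epsilon-strongly graded) module, yet $\epsilon_gM_g=\{0\}\neq M_g$. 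So no degree-chasing argument can rescue that lemma: delete the final paragraph, keep the two displayed inclusions, and you have precisely the paper's proof.
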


\begin{proof}
Suppose that (a) holds. First we show that (b) holds.
Let $M$ be a $G$-graded left $S$-module
and take $(g,h) \in G_2$. Then
\begin{displaymath}
	S_g S_{g^{-1}} M_{gh} \subseteq
S_g M_{g^{-1} g h} = S_g M_h = S_g S_{g^{-1}} S_g M_h\subseteq S_g S_{g^{-1}} M_{gh}.
\end{displaymath}
Next we show that (c) holds.
Let $M$ be a $G$-graded right $S$-module
and take $(g,h) \in G_2$. Then
\begin{displaymath}
	M_{gh} S_{h^{-1}} S_h \subseteq
 M_g S_h = M_g   S_hS_{h^{-1}}S_h 
\subseteq M_{gh} S_{h^{-1}}S_h.
\end{displaymath}
It is clear that (b) (or (c)) implies (a). 
\end{proof}

\begin{prop}\label{tensor}
Suppose that $S$ is epsilon-strongly graded, 
and let $\{\epsilon_g\}_{g \in G_1}$ be the family of 
central idempotents of $R$ provided by Proposition~\ref{epsilon}. 
Then for  all $(g,h) \in G_2$ the following assertions hold:
\begin{itemize}

\item[(a)] For every graded left $S$-module $M$
 the multiplication map
$m_{g,h} : S_g \otimes_{S_{d(g)}} M_h \rightarrow \epsilon_g M_{gh}$
is an isomorphism of $R$-bimodules;

\item[(b)] For every graded right $S$-module $M$
 the multiplication map
$m_{g,h}' : M_g \otimes_{S_{d(g)}} S_h \rightarrow M_{gh} \epsilon_{h^{-1}}$
is an isomorphism of $R$-bimodules;

\item[(c)] The multiplication map
$m_{g,h} : S_g \otimes_R S_h \rightarrow \epsilon_g S_{gh} = S_{gh} \epsilon_{h^{-1}}$
is an isomorphism of $R$-bimodules;

\item[(d)] For every $g \in G_1$, the sextuple 
\begin{displaymath}
	( \epsilon_g R , \epsilon_{g^{-1}}R , S_g , S_{g^{-1}} , m_{g,g^{-1}} , m_{g^{-1},g} )
\end{displaymath}
is a set of equivalence data.

\end{itemize}
\end{prop}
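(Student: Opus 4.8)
\textbf{Proof proposal for Proposition~\ref{tensor}.}

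The plan is to prove the four assertions in the order (a), (b), (c), (d), since each reduces to the previous one. For (a), I would first observe that the multiplication map $m_{g,h} : S_g \otimes_{S_{d(g)}} M_h \to M_{gh}$ is a well-defined $R$-bimodule homomorphism (the left $R$-action is on $S_g$ via $\epsilon_g S_{c(g)}$, the right $R$-action on $M_h$, and since $d(g)=c(h)$ the tensor product over $S_{d(g)}$ makes sense). Its image lands in $\epsilon_g M_{gh}$: indeed $S_g M_h \subseteq \epsilon_g S_{c(g)} M_{gh} = \epsilon_g M_{gh}$, and conversely $\epsilon_g M_{gh} = S_g S_{g^{-1}} M_{gh} = S_g M_h$ by Proposition~\ref{tensorg}(b). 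So $m_{g,h}$ is a surjection onto $\epsilon_g M_{gh}$. For injectivity, I would produce an explicit inverse: using that $\epsilon_{g^{-1}} = 1_{S_{g^{-1}}S_g}$ is the identity on $S_{g^{-1}}$ and $\epsilon_g = 1_{S_g S_{g^{-1}}}$, write $\epsilon_g = \sum_i a_i b_i$ with $a_i \in S_g$, $b_i \in S_{g^{-1}}$, and define a map $\epsilon_g M_{gh} \to S_g \otimes_{S_{d(g)}} M_h$ by $m \mapsto \sum_i a_i \otimes b_i m$ (note $b_i m \in S_{g^{-1}} M_{gh} \subseteq M_{g^{-1}gh} = M_h$). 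A short computation shows this is two-sided inverse to $m_{g,h}$: composing one way gives $\sum_i a_i b_i m = \epsilon_g m = m$, and composing the other way, for $s \otimes n$ with $s \in S_g$, $n \in M_h$, gives $\sum_i a_i \otimes b_i s n$, which equals $\sum_i a_i b_i s \otimes n = \epsilon_g s \otimes n = s \otimes n$ since $b_i s \in S_{g^{-1}} S_g = S_{d(g)}$ can be moved across the tensor.

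For (b), the argument is entirely symmetric: the map $m_{g,h}': M_g \otimes_{S_{d(g)}} S_h \to M_{gh}\epsilon_{h^{-1}}$ is surjective because $M_g S_h = M_{gh} S_{h^{-1}} S_h = M_{gh}\epsilon_{h^{-1}}$ by Proposition~\ref{tensorg}(c), and injectivity follows from writing $\epsilon_{h^{-1}} = 1_{S_{h^{-1}}S_h} = \sum_j c_j d_j$ with $c_j \in S_{h^{-1}}$, $d_j \in S_h$, and defining the inverse $M_{gh}\epsilon_{h^{-1}} \to M_g \otimes_{S_{d(g)}} S_h$ by $m \mapsto \sum_j m c_j \otimes d_j$.

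Assertion (c) is the special case of (a) with $M = S$ as a left $S$-module (so $M_h = S_h$), which gives $m_{g,h}: S_g \otimes_{S_{d(g)}} S_h \to \epsilon_g S_{gh}$; but since $S_{d(g)} = R \epsilon_{d(g)}$ acts on $S_g$ through $\epsilon_g$ anyway (as $\epsilon_g \in Z(R)$ and $\epsilon_g \epsilon_{d(g)} = \epsilon_g$, because $S_g S_{g^{-1}} \subseteq S_{c(g)}$ forces... actually here I must be a bit careful), one checks $S_g \otimes_{S_{d(g)}} S_h = S_g \otimes_R S_h$, so $m_{g,h}: S_g \otimes_R S_h \to \epsilon_g S_{gh}$ is an $R$-bimodule isomorphism. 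The equality $\epsilon_g S_{gh} = S_{gh}\epsilon_{h^{-1}}$ then follows by comparing with (b) applied to $M = S$ as a right module. Finally, (d) is a matter of assembling the pieces: $\epsilon_g R = S_g S_{g^{-1}}$ and $\epsilon_{g^{-1}} R = S_{g^{-1}} S_g$ are unital (hence unital) rings, $S_g$ is naturally an $\epsilon_g R$-$\epsilon_{g^{-1}} R$-bimodule and $S_{g^{-1}}$ an $\epsilon_{g^{-1}} R$-$\epsilon_g R$-bimodule, and by (c) the maps $m_{g,g^{-1}}: S_g \otimes_R S_{g^{-1}} \to \epsilon_g R$ and $m_{g^{-1},g}: S_{g^{-1}} \otimes_R S_g \to \epsilon_{g^{-1}} R$ are isomorphisms; the two associativity pentagon/square diagrams in the definition of equivalence data commute simply because both composites are restrictions of the associative multiplication of $S$.

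\textbf{Main obstacle.} The routine-looking but genuinely delicate point is the bookkeeping of \emph{which} ring one tensors over and making sure the $\epsilon$-idempotents interact correctly: one must verify that $S_g \otimes_{S_{d(g)}} M_h$ can be identified with $S_g \otimes_R M_h$ (or keep track of the distinction), and that $b_i m$, $b_i s$, $mc_j$ etc. land in the claimed graded components so that the explicit inverse maps are well-defined on the tensor product over $S_{d(g)}$ rather than merely over the ground ring. Establishing that $m_{g,h}$ is well-defined as a map \emph{out of} the tensor product (balanced over $S_{d(g)}$) and that the proposed inverse is likewise well-defined — not just set-theoretically inverse on elements — is where the care is needed; everything else is formal.
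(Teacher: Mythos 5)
Your proposal is correct and follows essentially the same route as the paper: surjectivity from Proposition~\ref{tensorg}, and injectivity via the decomposition $\epsilon_g=\sum_i a_ib_i$ with $a_i\in S_g$, $b_i\in S_{g^{-1}}$, moving $b_is\in S_{g^{-1}}S_g\subseteq S_{d(g)}$ across the tensor product (the paper phrases this as a direct computation showing $m_{g,h}(x)=0$ implies $x=0$, which is exactly the verification that your explicit inverse works), with (c) and (d) then obtained from (a)/(b) just as the paper asserts. Your extra care about identifying $\otimes_{S_{d(g)}}$ with $\otimes_R$ is a sound elaboration of a point the paper leaves implicit, not a departure in method.
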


\begin{proof} Take $(g,h) \in G_2$.

(a) Let $M$ be a $G$-graded left $S$-module.
From Proposition~\ref{tensorg}(ii) it follows that $m_{g,h}$ is surjective.
Now we show that $m_{g,h}$ is injective.
To this end, take a positive integer $n$ and 
$s_g^{(i)} \in S_g$ and $l_h^{(i)} \in M_h$,
for $i \in \{1,\ldots,n\}$, such that $m_{g,h}(x) = 0$, 
where $x = \sum_{i=1}^n s_g^{(i)} \otimes l_h^{(i)} \in S_g \otimes_{S_{d(g)}} M_h$.
Take a positive integer $m$ and $u_g^{(j)} \in S_g$ and
$v_{g^{-1}}^{(j)} \in S_{g^{-1}} $,
for $j \in \{1,\ldots,m\}$, such that 
$\epsilon_g = \sum_{j=1}^m u_g^{(j)} v_{g^{-1}}^{(j)}$. Then 
\begin{align*}
x & = 
\sum_{i=1}^n s_g^{(i)} \otimes l_h^{(i)} 
= \sum_{i=1}^n \epsilon_g s_g^{(i)} \otimes l_h^{(i)} 
= \sum_{i=1}^n \sum_{j=1}^m u_g^{(j)} v_{g^{-1}}^{(j)} s_g^{(i)} \otimes l_h^{(i)} \\
& = \sum_{i=1}^n \sum_{j=1}^m u_g^{(j)} \otimes v_{g^{-1}}^{(j)} s_g^{(i)} l_h^{(i)} 
= \sum_{j=1}^m u_g^{(j)} \otimes v_{g^{-1}}^{(j)} m_{g,h}(x) = 0.
\end{align*}

(b) Let $M$ be a $G$-graded right $S$-module.
From Proposition~\ref{tensorg}(iii) it follows that $m_{g,h}'$ is surjective.
Now we show that $m_{g,h}'$ is injective.
To this end, take a positive integer $n$ and 
$m_g^{(i)} \in M_g$ and $s_h^{(i)} \in S_h$,
for $i \in \{1,\ldots,n\}$, such that $m_{g,h}'(x) = 0$, 
where $x = \sum_{i=1}^n l_g^{(i)} \otimes s_h^{(i)} \in M_g \otimes_{S_{d(g)}} S_h$.
Take a positive integer $m$, and $u_{h^{-1}}^{(j)} \in S_{h^{-1}}$ and
$v_{h}^{(j)} \in S_{h}$,
for $j \in \{1,\ldots,m\}$, such that 
$\epsilon_{h^{-1}} = \sum_{j=1}^m u_{h^{-1}}^{(j)} v_{h}^{(j)}$. Then 
\begin{align*}
x & = 
\sum_{i=1}^n l_g^{(i)} \otimes s_h^{(i)} 
= \sum_{i=1}^n l_g^{(i)} \otimes s_h^{(i)} \epsilon_{h^{-1}}
= \sum_{i=1}^n \sum_{j=1}^m l_g^{(i)} \otimes s_h^{(i)} u_{h^{-1}}^{(j)} v_{h}^{(j)} \\
&= \sum_{i=1}^n \sum_{j=1}^m l_g^{(i)} s_h^{(i)} u_{h^{-1}}^{(j)} \otimes v_{h}^{(j)}
= \sum_{j=1}^m m_{g,h}'(x) u_{h^{-1}}^{(j)} \otimes v_{h}^{(j)} = 0.\end{align*}

(c) and (d) follow immediately from (a) or (b).
\end{proof}

\begin{rem}
Take $g \in G_1$.
It is clear from the definition of epsilon-strongly groupoid graded rings that the sextuplet 
\begin{displaymath}
	( \epsilon_g R , \epsilon_{g^{-1}}R , S_g , S_{g^{-1}} , m_{g,g^{-1}} , m_{g^{-1},g} )
\end{displaymath}
is a set of pre-equivalence data with $m_{g,g^{-1}}$ and $m_{g^{-1},g}$ surjective.
Thus, injectivity of the maps $m_{g,g^{-1}}$ and $m_{g^{-1},g}$ also follow from
Proposition~\ref{proppreequivalence}(a). 
\end{rem}

\section{Generalized Epsilon-crossed products}\label{sectionepsiloncrossedproducts}

In this section,
we introduce generalized epsilon-crossed groupoid products
(see Definition~\ref{defpartialgeneralizedepsiloncrossedproduct})
and we show that they parametrize the family of 
epsilon-strongly groupoid graded rings
(see Proposition~\ref{firstcorrespondence} and Proposition~\ref{secondcorrespondence}).
Throughout this section $G$ denotes a small groupoid with $G_0$ finite.

\begin{defi}
Suppose that $F : G \rightarrow {\rm PIC}_{cat}$ is a partial functor
of inverse categories. For each $e \in G_0$ define the ring $A_e$ by $F(e) = A_e$. 
For each $g \in G_1$ put
$F(g) = \left[ {}_{A_{c(g)}}^{I_g} (P_g)_{A_{d(g)}}^{J_g} \right]$
for some $A_{c(g)}$-$A_{d(g)}$-bimodule $P_g$,
some unital ideal $I_g$ of $A_{c(g)}$ and some unital ideal $J_g$ of $A_{d(g)},$ 
making $P_g$ an $I_g$-$J_g$-bimodule.
For the time being, assume that the bimodules $P_g$, for $g \in G_1$, are fixed.
From the equality $F(g^{-1}) = F(g)^*$ it follows by the proof of Theorem~\ref{invcat} that
$J_g = I_{g^{-1}}$ so we may write
$F(g) =\left [ {}_{A_{c(g)}}^{I_g} (P_g)_{A_{d(g)}}^{I_{g^{-1}}} \right]$.
For each $g \in G_1$, put $\epsilon_g = 1_{I_g}$.
\end{defi}

\begin{prop}\label{paction}
Suppose that $(g,h) \in G_2$. Then 
$\gamma_{P_{gh}} (\epsilon_{ (gh)^{-1} }\epsilon_{h^{-1}}) =
\epsilon_g \epsilon_{gh}$.
In particular, $\epsilon_g P_{gh} = P_{gh} \epsilon_{h^{-1}}$.
\end{prop}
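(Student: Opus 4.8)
The plan is to use the defining properties of the partial functor $F$, namely axiom (I3) applied to the composable pair $(g,h)$, translated through the composition rule in ${\rm PIC}_{cat}$ and the isomorphism $\theta:{\rm idem}(A)\to{\rm ideal}(A)$ from Proposition~\ref{isomorphism}. Since $F(g)F(h) = F(g)F(g^*)F(gh)$ in ${\rm PIC}_{cat}$, I would first compute the codomain-side unital ideal of both sides of this equality. By the formula for composition in Proposition~\ref{composition}, the codomain-side ideal of $F(g)F(gh)$ is $I_g'' = \gamma_{P_g}(1_{J_g} 1_{I_{gh}})A_{c(g)} = \gamma_{P_g}(\epsilon_{g^{-1}}\epsilon_{gh})A_{c(g)}$ (using $J_g = I_{g^{-1}}$ and $\epsilon_g = 1_{I_g}$). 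Similarly, the codomain-side ideal of $F(g)F(h)$ is $\gamma_{P_g}(\epsilon_{g^{-1}}\epsilon_h)A_{c(g)}$. Comparing both sides, and noting that the left-hand side $F(g)F(h)$ also equals $F(g)F(g^*)F(gh) = F(g)F(gh)$ after one more application of the composition rule (or directly via Lemma~\ref{lemmagh}-type manipulations), I would extract an equality of unital ideals and hence, via the injectivity of $\theta$, an equality of the corresponding central idempotents.

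More carefully, I would compute both composites $F(g)F(h)$ and $F(g)F(g^*)F(gh)$ using Proposition~\ref{composition} and track only the codomain-side unital ideals (the $I''$-components). For $F(g)F(gh)$ this is $\gamma_{P_g}(\epsilon_{g^{-1}}\epsilon_{gh})A_{c(g)}$ as above. Now $F(g)F(g^*)$ has codomain-side ideal $\gamma_{P_g}(1_{J_g}1_{I_{g^{-1}}})A_{c(g)} = \gamma_{P_g}(\epsilon_{g^{-1}}\epsilon_{g^{-1}})A_{c(g)} = \gamma_{P_g}(\epsilon_{g^{-1}})A_{c(g)} = 1_{I_g}A_{c(g)} = I_g$ (since $\gamma_{P_g}(\epsilon_{g^{-1}}) = \gamma_{P_g}(1_{I_{g^{-1}}}) = 1_{I_g} = \epsilon_g$, as $\gamma_{P_g}: Z(I_{g^{-1}})\to Z(I_g)$ is a ring isomorphism carrying identity to identity). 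Then composing again, the codomain-side ideal of $(F(g)F(g^*))F(gh)$ becomes $\gamma_{\epsilon_g A_{c(g)}}(\epsilon_g \cdot \gamma_{P_g}(\epsilon_{g^{-1}}\epsilon_{gh}))A_{c(g)}$; since $\gamma$ for the identity bimodule $\epsilon_g A_{c(g)}$ is just the identity on $Z(\epsilon_g A_{c(g)})$, this simplifies to $\epsilon_g\gamma_{P_g}(\epsilon_{g^{-1}}\epsilon_{gh})A_{c(g)}$. Meanwhile, the codomain-side ideal of $F(g)F(h)$ is $\gamma_{P_g}(\epsilon_{g^{-1}}\epsilon_h)A_{c(g)}$. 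But $\epsilon_h$ and $\epsilon_{gh}$ both live in $A_{d(g)} = A_{c(h)}$, and $d(gh) = d(h)$ forces $\epsilon_{(gh)^{-1}} = 1_{I_{(gh)^{-1}}}$ to be the relevant idempotent; I would rewrite $\gamma_{P_g}(\epsilon_{g^{-1}}\epsilon_h)$ as $\gamma_{P_g}(\epsilon_{g^{-1}}\epsilon_{(gh)^{-1}}\epsilon_h)$ using that $\epsilon_{(gh)^{-1}}$ acts as an identity on $S_{(gh)^{-1}}$-related data, or more directly recognize that $I_g'' = \epsilon_g\epsilon_{gh}A_{c(g)}$ by the definition of the composition product in ${\rm PIC}_{cat}$. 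Equating the two expressions for the codomain ideal gives $\gamma_{P_g}(\epsilon_{(gh)^{-1}}\epsilon_{h^{-1}})A_{c(g)} = \epsilon_g\epsilon_{gh}A_{c(g)}$, and by Proposition~\ref{isomorphism} this upgrades to the idempotent equality $\gamma_{P_{gh}}(\epsilon_{(gh)^{-1}}\epsilon_{h^{-1}}) = \epsilon_g\epsilon_{gh}$ — here I would need $\gamma_{P_{gh}}$ rather than $\gamma_{P_g}$, which I obtain from the relation $P_{gh} \cong \epsilon_g(P_g\otimes P_h)$ coming from (I3) together with the behaviour of $\gamma$ under tensor products (Proposition~\ref{lfunctor}: $\gamma_{P\otimes Q} = \gamma_P\circ\gamma_Q$) restricted appropriately.

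For the final clause, I would argue directly from the displayed idempotent identity: $\epsilon_g P_{gh} = \epsilon_g\epsilon_{gh}P_{gh}$ (since $\epsilon_{gh}$ is an identity for the left $I_{gh}$-action on $P_{gh}$) $= \gamma_{P_{gh}}(\epsilon_{(gh)^{-1}}\epsilon_{h^{-1}})P_{gh} = P_{gh}\epsilon_{(gh)^{-1}}\epsilon_{h^{-1}}$ by the defining property of $\gamma_{P_{gh}}$ (Remark~\ref{remarkgamma}: $\gamma_P(b)p = pb$) $= P_{gh}\epsilon_{h^{-1}}$ since $\epsilon_{(gh)^{-1}}$ is an identity for the right action on $P_{gh}$.

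The main obstacle I anticipate is bookkeeping: correctly identifying, among the nested $\gamma$'s and restrictions produced by iterating Proposition~\ref{composition}, which idempotent lives in which ring and which $\gamma$ (that of $P_g$, of $P_{gh}$, or of the tensor product) is the one appearing, and verifying that the $\gamma$ attached to an identity bimodule $\epsilon A$ is genuinely the identity map so that the triple composite collapses correctly. The conceptual content is light — it is entirely forced by (I3) and the functoriality of $\gamma$ under tensor products — but the index-chasing through the composition formula is where care is required.
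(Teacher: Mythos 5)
Your final clause (deducing $\epsilon_g P_{gh} = P_{gh}\epsilon_{h^{-1}}$ from the displayed idempotent identity) is exactly the paper's argument and is fine. The main identity, however, has a genuine gap. You only exploit the first equality of (I3), $F(g)F(h) = F(g)F(g^*)F(gh)$, which forces you to compare the codomain ideal of $F(g)F(h)$, namely $\gamma_{P_g}(\epsilon_{g^{-1}}\epsilon_h)A_{c(g)}$, with that of the triple product; to turn the resulting $\gamma_{P_g}$ into the $\gamma_{P_{gh}}$ of the statement you invoke $P_{gh}\cong\epsilon_g(P_g\otimes P_h)$ --- but that isomorphism is a \emph{consequence} of this very proposition (it is recorded in the remark immediately following it) and is not available here; moreover Proposition~\ref{lfunctor} relates $\gamma_{P_g\otimes P_h}$ to $\gamma_{P_g}\circ\gamma_{P_h}$, not to $\gamma_{P_{gh}}$, so the conversion step is circular/unproved. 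Several of your intermediate expressions are also ill-formed: $F(g)F(gh)$ is not composable in ${\rm PIC}_{cat}$ unless $d(g)=c(g)$, and its alleged codomain ideal $\gamma_{P_g}(1_{J_g}1_{I_{gh}})A_{c(g)}$ multiplies $\epsilon_{g^{-1}}\in A_{d(g)}$ with $\epsilon_{gh}\in A_{c(g)}$, elements of different rings; and in any case $F(g)F(g^*)F(gh)\neq F(g)F(gh)$, since $F(g)F(g^*)=[\,{}_{A_{c(g)}}^{I_g}(I_g)_{A_{c(g)}}^{I_g}\,]$ is an idempotent morphism, not an identity, and cannot be cancelled.

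The repair is to use \emph{both} equalities in (I3), which is what the paper does: compare the two triple products rather than either of them with $F(g)F(h)$. On one hand $F(g)F(g^{-1})F(gh)=[\,{}_{A_{c(g)}}^{I_g}(I_g)_{A_{c(g)}}^{I_g}\,]\,F(gh)$ has codomain ideal $\gamma_{I_g}(\epsilon_g\epsilon_{gh})A_{c(g)}=\epsilon_g\epsilon_{gh}A_{c(g)}$, since $\gamma_{I_g}$ is the identity map (this is the computation you carry out, once the spurious nested $\gamma_{P_g}$ is removed). On the other hand $F(gh)F(h^{-1})F(h)=F(gh)\,[\,{}_{A_{d(h)}}^{I_{h^{-1}}}(I_{h^{-1}})_{A_{d(h)}}^{I_{h^{-1}}}\,]$ has codomain ideal $\gamma_{P_{gh}}(\epsilon_{(gh)^{-1}}\epsilon_{h^{-1}})A_{c(g)}$, where now both idempotents lie in $A_{d(h)}$, so the expression is well defined and $\gamma_{P_{gh}}$ appears for free, with no tensor-product argument needed. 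Equating these two unital ideals of $A_{c(g)}$ and applying Proposition~\ref{isomorphism} (injectivity of $x\mapsto Ax$ on central idempotents) yields $\gamma_{P_{gh}}(\epsilon_{(gh)^{-1}}\epsilon_{h^{-1}})=\epsilon_g\epsilon_{gh}$ directly; your last paragraph then completes the proof as written.
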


\begin{proof}
From  \eqref{proddata} it follows that
\begin{align*}F(g) F(g^{-1}) F(gh) &= 
\left[ {}_{A_{c(g)}}^{I_g} (P_g)_{A_{d(g)}}^{I_{g^{-1}}} \right]
\left[  {}_{A_{d(g)}}^{ I_{g^{-1}} } ( P_{g^{-1}} )_{A_{c(g)}}^{I_g} \right]
\left[ {}_{A_{c(g)}}^{I_{gh}} ( P_{gh} )_{A_{d(h)}}^{I_{(gh)^{-1}}} \right]\\& = 
 \left[ {}_{A_{c(g)}}^{I_g} (I_g)_{A_{c(g)}}^{I_g}  \right]
\left[ {}_{A_{c(g)}}^{I_{gh}} ( P_{gh} )_{A_{d(h)}}^{I_{(gh)^{-1}}} \right]\\&
=\left[ {}_{A_{c(g)}}^{ \gamma_{I_g}(\epsilon_g \epsilon_{gh}) A_{c(g)} }
( I_g \otimes_{A_{c(g)}} 
P_{gh} )_{A_{d(h)}}^{ \gamma_{P_{gh}}^{-1}( \epsilon_g \epsilon_{gh} ) A_{d(h)} } \right]
\\&=\left [ {}_{A_{c(g)}}^{ \epsilon_g \epsilon_{gh} A_{c(g)} }
( I_g \otimes_{A_{c(g)}} 
P_{gh} )_{A_{d(h)}}^{ \gamma_{P_{gh}}^{-1}( \epsilon_g \epsilon_{gh} ) A_{d(h)} } \right]   \end{align*}
and
\begin{align*}F(gh)F(h^{-1})F(h) &= 
\left[ {}_{A_{c(g)}}^{I_{gh}} ( P_{gh} )_{A_{d(h)}}^{I_{(gh)^{-1}}} \right] 
\left[ {}_{A_{d(h)}}^{ I_{h^{-1}} } ( P_{h^{-1}} )_{A_{c(h)}}^{I_h} \right]
\left[ {}_{A_{c(h)}}^{I_h} (P_h)_{A_{d(h)}}^{I_{h^{-1}}} \right]\\& = 
 \left[ {}_{A_{c(g)}}^{I_{gh}} ( P_{gh} )_{A_{d(h)}}^{I_{(gh)^{-1}}} \right]
\left[ {}_{A_{d(h)}}^{I_{h^{-1}}} (I_{h^{-1}})_{A_{d(h)}}^{I_{h^{-1}}}  \right]\\&=\left[ {}_{ A_{c(g)} }^{ \gamma_{P_{gh}}( \epsilon_{(gh)^{-1}} \epsilon_{h^{-1}} ) A_{c(g)} }
( P_{gh} \otimes_{A_{d(h)}} 
I_{h^{-1}} )_{A_{d(h)}}^{ \epsilon_{(gh)^{-1}} \epsilon_{h^{-1}} A_{d(h)} }\right].\end{align*}
Thus, from the equality $F(g) F(g^{-1}) F(gh) =
F(gh) F(h^{-1}) F(h)$ and Proposition~\ref{isomorphism}, we get that
$\gamma_{P_{gh}} (\epsilon_{ (gh)^{-1} }\epsilon_{h^{-1}}) =
\epsilon_g \epsilon_{gh}$. Finally,
$\epsilon_g P_{gh} = 
\epsilon_g \epsilon_{gh} P_{gh} = 
\gamma_{P_{gh}} (\epsilon_{ (gh)^{-1} }\epsilon_{h^{-1}}) P_{gh} =
P_{gh} \epsilon_{ (gh)^{-1} }\epsilon_{h^{-1}} =
P_{gh} \epsilon_{h^{-1}}.$
\end{proof}

\begin{rem}
Let $F : G \rightarrow {\rm PIC}_{cat}$ be a partial functor of inverse categories. 
Then for every $(g,h) \in G_2$ there are $A_{c(g)}$-$A_{d(h)}$-bimodule isomorphisms 
\begin{displaymath}
	P_g \otimes_{A_{d(g)}} P_h \cong P_g  \otimes_{A_{d(g)}} P_{g\m} 
\otimes_{A_{c(g)}} P_{gh}\cong I_g\otimes_{A_{c(g)}} P_{gh} \cong \epsilon_g P_{gh}.
\end{displaymath}
\end{rem}

\begin{defi}\label{defpartialgeneralizedepsiloncrossedproduct}
Let $F : G \rightarrow {\rm PIC}_{cat}$ 
be a partial functor of inverse categories.
A \emph{partial factor set associated with $F$} is a family
$f = \{ f_{g,h} \mid (g,h) \in G_2 \}$, where each 
$f_{g,h} : P_g \otimes_{A_{d(g)}} P_h \rightarrow 
\epsilon_g P_{gh} = P_{gh} \epsilon_{h^{-1}}$ is an
isomorphism of $A_{c(g)}$-$A_{d(h)}$-bimodules, making the
following diagram commutative
\begin{equation}\label{ASSOCIATIVE}
\CD P_g \otimes_{A_{d(g)}} P_h \otimes_{A_{d(h)}} P_r @> {\rm id}_{P_g} 
\otimes f_{h,r} >> P_g \otimes_{A_{d(g)}} P_{hr} \epsilon_{r^{-1}} \\
@V f_{g,h} \otimes {\rm id}_{P_r} VV @VV f_{g,hr} V \\
\epsilon_g P_{gh} \otimes_{A_{d(h)}} P_r
@> f_{gh,r} >> \epsilon_g P_{ghr} \epsilon_{r^{-1}} \\
\endCD
\end{equation}
for all $(g,h,r) \in G_3$. If $f$ is a partial factor set
associated with $F$, then we define the partial generalized epsilon-crossed product
$(F,f)$ as the additive group $\oplus_{g \in G_1} P_g$
with multiplication defined by the biadditive extension of the relations
$x \cdot y = f_{g,h}(x \otimes y)$, if $(g,h) \in G_2$, 
and $x \cdot y = 0$, otherwise, for all $x \in P_g$
and $y \in P_h$ and all $g,h \in G_1$.
It is clear that if for each $g \in G_1$
we put $(F,f)_g = P_g$, then $(F,f)$ is a groupoid graded ring.
\end{defi}

\begin{rem} We notice that the notion of partial factor sets already exists in the literature, in a close but different sense. Indeed, partial projective representations and their corresponding factor 
sets where introduced in \cite{DN}. Later, in \cite{DN2}, 
these factor sets were called \emph{partial factor sets}. 
For a detailed account of these notions, we refer the reader to the survey \cite{pinedo2015}.
\end{rem}

\begin{prop}\label{firstcorrespondence}  
If $F : G \rightarrow {\rm PIC}_{cat}$ is a partial functor of 
inverse categories, then the ring $(F,f)$ is epsilon-strongly $G$-graded.
\end{prop}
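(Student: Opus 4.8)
The plan is to verify the three defining conditions of an epsilon-strong grading from Definition~\ref{defepsilonstronglygradedring} for the ring $S = (F,f)$, using the central idempotents $\epsilon_g = 1_{I_g}$ that come with the partial functor $F$. First I would record the structure: by construction $S_g = P_g$ (as an additive subgroup of $S$), and for $(g,h) \in G_2$ the multiplication $S_g S_h$ is, by biadditivity, exactly the image of $f_{g,h} : P_g \otimes_{A_{d(g)}} P_h \to \epsilon_g P_{gh}$; since $f_{g,h}$ is an isomorphism onto $\epsilon_g P_{gh} = P_{gh}\epsilon_{h^{-1}}$ (Proposition~\ref{paction}), we get $S_g S_h = \epsilon_g P_{gh} = \epsilon_g S_{gh}$. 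When $(g,h) \notin G_2$ the product is zero by definition, so $S$ is $G$-graded. Note $R = \oplus_{e \in G_0} S_e = \oplus_{e} P_e$, and since $F(e)^* = F(e)$ we have $I_e = A_e$, so $S_e = P_e = A_e$ and $R$ is the direct product ring $\prod_{e} A_e$ (here $G_0$ is finite); in particular $S_{c(g)} = A_{c(g)}$.

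Next I would compute $S_g S_{g^{-1}}$. Taking $h = g^{-1}$ above (this is legitimate since $(g,g^{-1}) \in G_2$ and $g g^{-1} = c(g)$), we get $S_g S_{g^{-1}} = \epsilon_g S_{g g^{-1}} = \epsilon_g S_{c(g)} = \epsilon_g A_{c(g)} = I_g$. Since $I_g$ is by hypothesis a unital ideal of $A_{c(g)} = S_{c(g)}$ with identity element $\epsilon_g$, the first required condition holds and the multiplicative identity of $S_g S_{g^{-1}}$ is indeed $\epsilon_g$. The remaining task is the pair of equalities $S_g S_h = S_g S_{g^{-1}} S_{gh} = S_{gh} S_{h^{-1}} S_h$ for $(g,h) \in G_2$. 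For the middle term: $S_g S_{g^{-1}} S_{gh} = I_g S_{gh}$; using the grading computation one more time, $S_{g^{-1}} S_{gh} = \epsilon_{g^{-1}} S_{g^{-1}gh}$, but it is cleaner to argue directly that $I_g S_{gh} = \epsilon_g P_{gh}$. Indeed $\epsilon_g \in I_g = S_g S_{g^{-1}}$ forces $\epsilon_g P_{gh} \subseteq I_g P_{gh} = S_g S_{g^{-1}} S_{gh}$, while the reverse inclusion $S_g S_{g^{-1}} S_{gh} = \epsilon_g A_{c(g)} S_{gh} \subseteq \epsilon_g S_{gh} = \epsilon_g P_{gh}$ is immediate because $\epsilon_g$ is central in $A_{c(g)}$; hence $S_g S_{g^{-1}} S_{gh} = \epsilon_g P_{gh} = S_g S_h$. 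Symmetrically, $S_{gh} S_{h^{-1}} S_h = S_{gh} \cdot I_{h^{-1}}$; since $I_{h^{-1}}$ is the unital ideal of $A_{c(h^{-1})} = A_{d(h)}$ with identity $\epsilon_{h^{-1}}$, the same idempotent argument on the right gives $S_{gh} S_{h^{-1}} S_h = P_{gh}\epsilon_{h^{-1}}$, and by Proposition~\ref{paction} this equals $\epsilon_g P_{gh} = S_g S_h$.

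Putting these together yields $S_g S_h = S_g S_{g^{-1}} S_{gh} = S_{gh} S_{h^{-1}} S_h$, completing all the axioms; alternatively, once $S_g S_{g^{-1}} = I_g$ is unital and one has produced the idempotent $\epsilon_g$ with $\epsilon_g s = s = s\epsilon_{g^{-1}}$ for $s \in S_g = P_g$ (which follows from $\epsilon_g$ being the left identity of the $I_g$-module $P_g$ and $\epsilon_{h^{-1}}$, via $\gamma_{P_g}$, acting as a right identity), one may instead simply invoke Proposition~\ref{epsilon}. I expect the only genuinely delicate point to be keeping the left/right module identifications straight — specifically, confirming that $\epsilon_{g^{-1}}$ acts as a right identity on $P_g$, which is precisely the content of the relation $\gamma_{P_g}(\epsilon_{g^{-1}}) = \epsilon_g$ packaged in Remark~\ref{remarkgamma} and Proposition~\ref{paction} (applied with $h = d(g)$ or directly to the equivalence data $(\epsilon_g R, \epsilon_{g^{-1}}R, P_g, P_{g^{-1}}, \ldots)$); everything else is a bookkeeping exercise in the grading identity $S_g S_h = \epsilon_g S_{gh}$ established in the first step.
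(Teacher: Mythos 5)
There is a genuine gap: you never check that $S=(F,f)$ is actually a ring in this paper's sense, i.e.\ associative and with a multiplicative identity, and that is precisely the nontrivial content of the paper's proof. Associativity comes from the commutativity of the diagram \eqref{ASSOCIATIVE}, which you do not invoke. More seriously, the identity element must be constructed: the product on the degree-$e$ component $P_e=A_e$ is $x\cdot y=f_{e,e}(x\otimes y)$, and $f_{e,e}$ is only an $A_e$-bimodule isomorphism, so $x\cdot y=x\,c_e\,y$ where $c_e=f_{e,e}(1_{A_e}\otimes 1_{A_e})$ is a central unit which need not equal $1_{A_e}$. The paper shows $c_e\in U(Z(A_e))$, sets $n_e=c_e^{-1}$, and verifies via \eqref{ASSOCIATIVE} that $n=\sum_{e\in G_0}n_e$ satisfies $n\cdot x=x\cdot n=x$ for all homogeneous $x$; without this step the statement ``$(F,f)$ is an epsilon-strongly $G$-graded ring'' is not established.

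This twisting also invalidates two specific assertions in your write-up, because you conflate the ring product in $S$ with the $A_{c(g)}$-module action on $P_{gh}$. Since the degree-$c(g)$ multiplication is twisted by $c_{c(g)}$, the multiplicative identity of the ideal $S_gS_{g^{-1}}=\epsilon_gA_{c(g)}$ inside $S$ is $\epsilon_g c_{c(g)}^{-1}$, not $\epsilon_g$; likewise, for $s\in S_g$ the ring product $\epsilon_g\cdot s=f_{c(g),g}(\epsilon_g\otimes s)$ differs in general from the module action $\epsilon_g s$, so $\epsilon_g$ does not satisfy the hypotheses of Proposition~\ref{epsilon} as you propose (the correct elements are $\epsilon_g n_{c(g)}$ on the left and $n_{d(g)}\epsilon_{g^{-1}}$ on the right). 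Your set-level computations do survive this correction: $S_gS_h$ is spanned by the values of $f_{g,h}$, hence equals $\epsilon_gS_{gh}=S_{gh}\epsilon_{h^{-1}}$ by Proposition~\ref{paction}, and the equalities $S_gS_{g^{-1}}S_{gh}=\epsilon_gS_{gh}=S_{gh}S_{h^{-1}}S_h$ follow since the twist is by a central unit and the factor maps are surjective bimodule maps. So the epsilon-strong conditions can be salvaged, but as written the proof omits the ring axioms (the bulk of the paper's argument) and misidentifies the relevant identity elements.
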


\begin{proof}
Put $S = (F,f)$. By \eqref{ASSOCIATIVE} the
multiplication is associative. 
By the definition of the multiplication, for all $(g,h) \in G_2$,
the equality $S_g S_h = \epsilon_g S_{gh} = S_{gh} \epsilon_{h^{-1}}$ holds. 
All that is left to show is that $S$ has a multiplicative identity element. 
Take $e \in G_0$ and put $c_e = f_{e,e}( 1_{A_e} \otimes 1_{A_e} )$.
Take $a_e \in A_e$. Then, since $f_{e,e}$ is an $A_e$-bimodule homomorphism, 
it follows that 
\begin{align*}
a_e c_e &= 
a_e f_{e,e}( 1_{A_e} \otimes 1_{A_e} ) =
f_{e,e}( a_e \otimes 1_{A_e} ) =
f_{e,e}(1_{A_e} \otimes a_e) \\
&= f_{e,e}( 1_{A_e} \otimes 1_{A_e} ) a_e = c_e a_e.
\end{align*}
Thus, $c_e \in Z(A_e)$.
Since $f_{e,e}$ is surjective there are $a,a' \in A_e$
such that $f_{e,e}( a \otimes a' ) = 1_{A_e}$.
By $A_e$-bilinearity it follows that $aa' c_e = c_e aa' = 1_{A_e}$.
Hence $c_e \in U( Z(A_e) )$.
Now set $n_e = c_e^{-1}$.
Then $f_{e,e}(n_e \otimes n_e) = n_e$. 
Hence $n := \sum_{e \in G_0} n_e$ is a
multiplicative identity element of $S$. 
In fact, take $g \in G_1$ and $x \in P_g$.
Then there is $y \in P_g$ such
that $x = f_{c(g),g}(n_{c(g)} \otimes y)$. Thus, by
\eqref{ASSOCIATIVE}, we get that
\begin{displaymath}
\begin{array}{rcl}
 n \cdot x & = & n_{c(g)} \cdot x = f_{c(g),g}(n_{c(g)} \otimes x)
    = f_{c(g),g}(n_{c(g)} \otimes f_{c(g),g}(n_{c(g)} \otimes y)) \\
   & = &f_{c(g),g}(f_{c(g),c(g)} (n_{c(g)} \otimes n_{c(g)}) \otimes y)
    = f_{c(g),g}(n_{c(g)} \otimes y)
    = x.
\end{array}	
\end{displaymath}
Analogously, $x \cdot n = x$.
\end{proof}

\begin{prop}\label{secondcorrespondence}
If $S$ is an epsilon-strongly graded ring, then there is a 
partial functor of inverse categories
$F : G \rightarrow {\rm PIC}_{\rm cat}$ and a partial factor set $f$
associated with $F$ such that $S = (F,f)$.
\end{prop}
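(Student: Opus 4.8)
The plan is to reverse-engineer the data $(F,f)$ from the grading on $S$, in analogy with the classical construction in \cite{nas1982,lundstrom2006}, using the equivalence data produced in Proposition~\ref{tensor}. First I would set, for each $e \in G_0$, $A_e = S_e$ and $A = \oplus_{e \in G_0} S_e = R$; for each $g \in G_1$, put $P_g = S_g$, regarded as an $A_{c(g)}$-$A_{d(g)}$-bimodule. The ideals are $I_g = S_g S_{g^{-1}} = \epsilon_g R$, a unital ideal of $A_{c(g)}$ with identity $\epsilon_g$, and $J_g = S_{g^{-1}} S_g = \epsilon_{g^{-1}} R = I_{g^{-1}}$, so that $P_g$ is naturally an $I_g$-$I_{g^{-1}}$-bimodule. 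By Proposition~\ref{tensor}(d), the sextuple $(\epsilon_g R, \epsilon_{g^{-1}} R, S_g, S_{g^{-1}}, m_{g,g^{-1}}, m_{g^{-1},g})$ is a set of equivalence data, so each $\left[{}_{A_{c(g)}}^{I_g}(P_g)_{A_{d(g)}}^{I_{g^{-1}}}\right]$ is a well-defined morphism of ${\rm PIC}_{cat}$; define $F_1(g)$ to be this class and $F_0(e) = A_e$. One then needs $F(\mathrm{id}_e) = [{}_{A_e}^{A_e}(A_e)_{A_e}^{A_e}]$, which is immediate since $S_e = A_e$ and $\epsilon_e = 1_{A_e}$, giving (I2); (I1) is clear from the domain/codomain conventions.

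The crucial point is axiom (I3), i.e. that $F(g)F(h) = F(g)F(g^*)F(gh) = F(gh)F(h^*)F(h)$ for $(g,h) \in G_2$. Here I would unwind the composition in ${\rm PIC}_{cat}$ from Proposition~\ref{composition}: $F(g)F(h)$ is represented by $S_g \otimes_{A_{d(g)}} S_h$ together with the ideal $\gamma_{S_g}(1_{I_{g^{-1}}} 1_{I_h})A_{c(g)}$. By Proposition~\ref{tensor}(c) the multiplication map gives an $R$-bimodule isomorphism $S_g \otimes_R S_h \cong \epsilon_g S_{gh}$, and since $S_{d(g)}$ acts on this tensor product through $R$ (the other components of $R$ act as zero on $S_g$ and $S_h$), in fact $S_g \otimes_{A_{d(g)}} S_h \cong S_g \otimes_R S_h \cong \epsilon_g S_{gh}$. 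Computing the associated ideal via the definition of $\gamma$ and using $\epsilon_g S_{gh} = S_{gh}\epsilon_{h^{-1}}$ (from the epsilon-strong condition, cf. Proposition~\ref{paction}), one sees that $F(g)F(h)$ equals the class of $\epsilon_g S_{gh}$ as an $\epsilon_g S_{gh} S_{gh}^{-1}$-bimodule — and the same computation, carried out for $F(g)F(g^*)F(gh)$ and for $F(gh)F(h^*)F(h)$ using the analogous identifications $S_g \otimes S_{g^{-1}} \cong I_g$ and $S_{h^{-1}} \otimes S_h \cong I_{h^{-1}}$, yields the same class. This establishes (I3), so $F$ is a partial functor of inverse categories.

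It remains to produce the partial factor set $f$ and show $S = (F,f)$. For each $(g,h) \in G_2$ I would take $f_{g,h} = m_{g,h} : S_g \otimes_{A_{d(g)}} S_h \to \epsilon_g S_{gh} = S_{gh}\epsilon_{h^{-1}}$, the multiplication map, which is an $A_{c(g)}$-$A_{d(h)}$-bimodule isomorphism by Proposition~\ref{tensor}(c). Commutativity of diagram \eqref{ASSOCIATIVE} then reduces to associativity of multiplication in $S$: both composites send $x \otimes y \otimes z$ to $xyz$, so the diagram commutes. Finally, $(F,f) = \oplus_{g \in G_1} P_g = \oplus_{g \in G_1} S_g = S$ as additive groups, and the multiplication on $(F,f)$ is the biadditive extension of $x \cdot y = f_{g,h}(x \otimes y) = xy$ for $x \in S_g$, $y \in S_h$ with $(g,h) \in G_2$, and $x \cdot y = 0$ otherwise — which, because $S_g S_h = \{0\}$ when $(g,h) \notin G_2$, is exactly the multiplication in $S$. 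Hence $S = (F,f)$. The main obstacle is the bookkeeping in the (I3) verification: one must carefully track which unital ideal each tensor-product bimodule carries and invoke Proposition~\ref{isomorphism} and Remark~\ref{remarkgamma} to identify the $\gamma$-images, but conceptually it is forced by Proposition~\ref{tensor} together with the defining equalities $S_g S_h = S_g S_{g^{-1}} S_{gh} = S_{gh} S_{h^{-1}} S_h$.
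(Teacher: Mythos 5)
Your proposal is correct and follows essentially the same route as the paper: take $A_e=S_e$, $F(g)=\bigl[{}_{S_{c(g)}}^{\epsilon_g S_{c(g)}}(S_g)_{S_{d(g)}}^{\epsilon_{g^{-1}}S_{d(g)}}\bigr]$ and $f_{g,h}=m_{g,h}$, with everything resting on Proposition~\ref{tensor}; you in fact supply more detail than the paper, whose proof just cites Proposition~\ref{tensor}(c). One small caution: justify $\epsilon_g S_{gh}=S_{gh}\epsilon_{h^{-1}}$ and the idempotent identities needed in the (I3) bookkeeping directly from the epsilon-strong equalities (as you indicate), not via Proposition~\ref{paction}, since that proposition presupposes a partial functor and would be circular here.
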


\begin{proof}
Define $F : G \rightarrow {\rm PIC}_{\rm cat}$ by 
$F(g) = [ {}_{S_{c(g)}}^{\epsilon_g S_{c(g)} }
S_g {}_{S_{d(g)}}^{\epsilon_{g\m} S_{d(g)} }]$, $g \in G_1$, 
and a partial factor set $f$
associated with $F$ by the multiplication maps $f_{g,h} :
S_g \otimes_{S_{d(g)}} S_h \rightarrow
\epsilon_g S_{gh} = S_{gh} \epsilon_{h^{-1}}$
for $(g,h) \in G_2$. The claim now
follows immediately from Proposition~\ref{tensor}(c).
\end{proof}

\begin{defi}
Let $F$ and $F'$ be partial functors of inverse categories 
from $G$ to ${\rm PIC}_{\rm cat}$ that coincide on $G_0$. 
Take partial factor sets $f$ and $f'$
associated with $F$ and $F'$ respectively and put $F(g) =
\left[ {}_{A_{c(g)}}^{I_g} (P_g)_{A_{d(g)}}^{J_g} \right]$, 
$F'(g) = \left[ {}_{A_{c(g)}}^{I'_g} (P_g')_{A_{d(g)}}^{J'_g} \right]$, $g\in G_1$.
A \emph{morphism from $f$ to $f'$} is a family $\alpha =
(\alpha_{g})_{g \in G_1}$, where each
$\alpha_{g} : P_{g} \rightarrow P_{g}'$ is an
$A_{c(g)}$-$A_{d(g)}$-bimodule homomorphism, such that
the diagram 
\begin{equation}\label{MORPHISM}
\CD P_g \otimes_{A_{d(g)}} P_{h} @> f_{g,h} >> \epsilon_gP_{gh} \\
@V \alpha_{g} \otimes \alpha_{h} VV @VV \alpha_{gh} V \\
P_{g}' \otimes_{A_{d(g)}} P_{h}'
@> f_{gh}' >> \epsilon_g P_{gh}' \\
\endCD
\end{equation}
is commutative
for all $(g,h) \in G_2$.
\end{defi}

\begin{lem}\label{alpha} 
With the above notation, a morphism $\alpha$ from $F$ to $F'$ 
induces a homomorphism of graded rings $\alpha$ from $(F, f)$ to $(F', f').$ 
Moreover, if each $\alpha_e$, $e \in G_0$, is surjective, 
then $\alpha(1) = 1$. The map $\alpha$ is an isomorphism if
and only if each $\alpha_e$, $e \in G_0$, is bijective.
\end{lem}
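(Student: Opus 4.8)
The plan is to define the candidate homomorphism $\alpha : (F,f) \rightarrow (F',f')$ as the additive extension of the maps $\alpha_g : P_g \rightarrow P_g'$ over the direct sum decompositions $(F,f) = \oplus_{g \in G_1} P_g$ and $(F',f') = \oplus_{g \in G_1} P_g'$. This is automatically well-defined as a map of additive groups and is graded by construction, since $\alpha(P_g) = \alpha_g(P_g) \subseteq P_g'$. First I would check multiplicativity. By biadditivity it suffices to check that $\alpha(x \cdot y) = \alpha(x) \cdot \alpha(y)$ for homogeneous $x \in P_g$, $y \in P_h$. If $(g,h) \notin G_2$, then both products vanish by the definition of the multiplication in a generalized epsilon-crossed product, so both sides are $0$. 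If $(g,h) \in G_2$, then $x \cdot y = f_{g,h}(x \otimes y)$, so $\alpha(x \cdot y) = \alpha_{gh}(f_{g,h}(x \otimes y))$, while $\alpha(x) \cdot \alpha(y) = f'_{g,h}(\alpha_g(x) \otimes \alpha_h(y)) = f'_{g,h}((\alpha_g \otimes \alpha_h)(x \otimes y))$; these agree precisely because diagram \eqref{MORPHISM} commutes. Hence $\alpha$ is a homomorphism of graded rings (not necessarily unital for now).

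Next I would handle the identity element, using the explicit description of $1_{(F,f)}$ from the proof of Proposition~\ref{firstcorrespondence}: there $1_{(F,f)} = \sum_{e \in G_0} n_e$, where $n_e \in U(Z(A_e))$ is the inverse of $c_e = f_{e,e}(1_{A_e} \otimes 1_{A_e})$, and similarly $1_{(F',f')} = \sum_{e \in G_0} n_e'$ with $n_e' = (c_e')^{-1}$, $c_e' = f'_{e,e}(1_{A_e} \otimes 1_{A_e})$. Assuming each $\alpha_e$ is surjective, I would show $\alpha_e(n_e) = n_e'$. Applying \eqref{MORPHISM} with $g = h = e$ gives $\alpha_e(f_{e,e}(a \otimes b)) = f'_{e,e}(\alpha_e(a) \otimes \alpha_e(b))$ for $a,b \in A_e$; since $\alpha_e$ is an $A_e$-bimodule map and $A_e$ is generated by $1_{A_e}$ as an $A_e$-bimodule, surjectivity of $\alpha_e$ forces $\alpha_e(1_{A_e}) = 1_{A_e}$ (write $\alpha_e(1_{A_e}) = \alpha_e(1_{A_e} \cdot t)$ for a preimage $t$ of $1_{A_e}$ and use bilinearity; more directly, $\alpha_e(1_{A_e})$ must act as a two-sided identity on the image $A_e$). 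Then $\alpha_e(c_e) = \alpha_e(f_{e,e}(1_{A_e}\otimes 1_{A_e})) = f'_{e,e}(1_{A_e}\otimes 1_{A_e}) = c_e'$, and since $\alpha_e$ is a bimodule map over $A_e$, $1_{A_e} = \alpha_e(1_{A_e}) = \alpha_e(n_e c_e) = \alpha_e(n_e) c_e'$, whence $\alpha_e(n_e) = (c_e')^{-1} = n_e'$. Summing over the finite set $G_0$ yields $\alpha(1_{(F,f)}) = 1_{(F',f')}$.

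For the final assertion, if each $\alpha_e$ is bijective then, combined with surjectivity, $\alpha$ is already a unital graded ring homomorphism; bijectivity of $\alpha$ on all of $(F,f)$ follows once I show each $\alpha_g$ is bijective for $g \in G_1$. This I would deduce from Proposition~\ref{tensor}: for $g \in G_1$ the map $f_{c(g),g} : P_{c(g)} \otimes_{A_{c(g)}} P_g \rightarrow \epsilon_{c(g)} P_g = P_g$ is an isomorphism (here $\epsilon_{c(g)} = 1_{A_{c(g)}}$ and $P_{c(g)} = A_{c(g)}$, $I_{c(g)} = A_{c(g)}$), and similarly $f'_{c(g),g} : A_{c(g)} \otimes_{A_{c(g)}} P_g' \rightarrow P_g'$; the commuting square \eqref{MORPHISM} with $(g,h) = (c(g),g)$ identifies $\alpha_g$ with $\alpha_{c(g)} \otimes \alpha_g$ under these isomorphisms, and a short induction or a direct functoriality argument shows $\alpha_g$ is an isomorphism as soon as $\alpha_{c(g)}$ is. Actually the cleanest route is: $\alpha$ restricted to $P_g$ equals $\alpha_g$, so $\alpha$ bijective $\iff$ each $\alpha_g$ bijective; and the square \eqref{MORPHISM} with the left factor being the identity component shows $\alpha_g = f'_{c(g),g} \circ (\alpha_{c(g)} \otimes \alpha_g) \circ f_{c(g),g}^{-1}$ is not quite a self-referential triviality but rather forces, after tensoring the identity isomorphism $A_{c(g)} \otimes_{A_{c(g)}} P_g \cong P_g$ with $\alpha_{c(g)}$ being an isomorphism, that $\alpha_g$ is an isomorphism; conversely if some $\alpha_e$ fails to be injective or surjective then $\alpha$ fails to be injective or surjective on the $P_e$-component, so bijectivity of all $\alpha_e$ is also necessary. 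The only mild obstacle is bookkeeping: making the identification in \eqref{MORPHISM} precise enough to conclude ``$\alpha_{c(g)}$ iso $\Rightarrow$ $\alpha_g$ iso'' without circularity — but this is exactly the content of Proposition~\ref{tensor}(c) together with naturality of the tensor product, so no new idea is needed.
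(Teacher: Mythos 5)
Your construction of $\alpha$ by additive extension, and the verification that it is graded and multiplicative via \eqref{MORPHISM}, are fine (the paper itself only refers to Lundstr\"{o}m's Lemma 4.2 here). However, your treatment of the unit contains a false step: a surjective $A_e$-bimodule endomorphism $\alpha_e$ of $A_e$ is multiplication by a central \emph{unit} $z_e=\alpha_e(1_{A_e})$, and nothing forces $z_e=1_{A_e}$. For instance, over a field take $f_{e,e}(a\otimes b)=\lambda ab$, $f'_{e,e}(a\otimes b)=ab$ and $\alpha_e$ to be multiplication by $\lambda$; then \eqref{MORPHISM} commutes but $\alpha_e(1_{A_e})=\lambda$. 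The subsequent step $\alpha_e(n_ec_e)=\alpha_e(n_e)c_e'$ is also not a linearity property (right $A_e$-linearity gives $\alpha_e(n_e)c_e$). The conclusion $\alpha(1)=1$ is nevertheless correct and the repair is short: evaluating \eqref{MORPHISM} at $(e,e)$ on $1_{A_e}\otimes 1_{A_e}$ gives $z_ec_e=z_e^2c_e'$; surjectivity makes $z_e$ invertible, so $z_e=c_e(c_e')^{-1}$ and hence $\alpha_e(n_e)=z_ec_e^{-1}=(c_e')^{-1}=n_e'$.

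The genuine gap is in the ``if'' direction of the final claim. The square \eqref{MORPHISM} for the pair $(c(g),g)$ reads $\alpha_g\circ f_{c(g),g}=f'_{c(g),g}\circ(\alpha_{c(g)}\otimes\alpha_g)$; under the canonical identification $A_{c(g)}\otimes_{A_{c(g)}}P_g\cong P_g$, the map $\alpha_{c(g)}\otimes\alpha_g$ is just multiplication by the central unit $z_{c(g)}$ composed with $\alpha_g$, so this relation holds for an \emph{arbitrary} bimodule map $\alpha_g$ and carries no information about its bijectivity; neither Proposition~\ref{tensor}(c) nor ``naturality of the tensor product'' removes this circularity, which you half-acknowledge. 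In fact the $e$-components alone cannot control $\alpha_g$: if one does not insist that $f$ and $f'$ share the same ideals $I_g$ (as the notation $\epsilon_gP'_{gh}$ in \eqref{MORPHISM} presupposes), the inclusion $P_g=I_g\subsetneq A_{c(g)}=P_g'$ yields a morphism with all $\alpha_e$ bijective that is not surjective in degree $g$, so any correct proof must use more than the identity components. The missing ingredient is the $g^{-1}$-component together with the epsilon-strong structure of $S=(F,f)$ and $S'=(F',f')$ (Proposition~\ref{firstcorrespondence}, Proposition~\ref{epsilon}, Proposition~\ref{tensor}(d)): write $\epsilon_g\in S_gS_{g^{-1}}$ as $\sum_iu_iv_i$ with $u_i\in S_g$, $v_i\in S_{g^{-1}}$. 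Then for $x\in S_g$ with $\alpha(x)=0$ one has $\alpha_{d(g)}(v_ix)=\alpha(v_i)\alpha(x)=0$, so injectivity of $\alpha_{d(g)}$ gives $v_ix=0$ and $x=\epsilon_gx=\sum_iu_i(v_ix)=0$; surjectivity of $\alpha_g$ follows analogously by applying $\alpha$ to $\sum_iu_iv_i$, pulling the factors $\alpha(v_i)\cdot y\in S'_{d(g)}$ back through the surjection $\alpha_{d(g)}$, and using that $\alpha_{c(g)}(\epsilon_g)$ differs from $\epsilon_g$ only by the central unit $z_{c(g)}$. Without an argument of this kind the equivalence asserted in the lemma is not established.
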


\begin{proof}
Similar to the proof of \cite[Lemma 4.2]{lundstrom2004}.
\end{proof}

\begin{prop}
The isomorphism class of $(F, f)$ does not depend
on the choice of the bimodules $P_g$. 
\end{prop}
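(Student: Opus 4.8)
The plan is to reduce the statement to Lemma~\ref{alpha} by transporting a partial factor set along a family of bimodule isomorphisms. Suppose we are given two families of bimodules, $\{ P_g \}_{g \in G_1}$ and $\{ P_g' \}_{g \in G_1}$, each realising the morphisms $F(g)$ of ${\rm PIC}_{cat}$, normalised so that $P_{{\rm id}_e} = P_{{\rm id}_e}' = A_e$ for every $e \in G_0$. First I would note that, by the very definition of the equivalence relation $\sim$ on $\mathrm{PART}$, the equality $[ {}_{A_{c(g)}}^{I_g} (P_g)_{A_{d(g)}}^{I_{g^{-1}}} ] = [ {}_{A_{c(g)}}^{I_g} (P_g')_{A_{d(g)}}^{I_{g^{-1}}} ]$ means that for each $g \in G_1$ there is an $I_g$-$I_{g^{-1}}$-bimodule isomorphism $\alpha_g : P_g \rightarrow P_g'$; by the uniqueness of the $A_{c(g)}$-$A_{d(g)}$-bimodule structure extending the $I_g$-$I_{g^{-1}}$-structure (the remark following Definition~\ref{def:parteqdata}), each such $\alpha_g$ is automatically an isomorphism of $A_{c(g)}$-$A_{d(g)}$-bimodules, and I would take $\alpha_{{\rm id}_e} = {\rm id}_{A_e}$.

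Given a partial factor set $f = \{ f_{g,h} \mid (g,h) \in G_2 \}$ relative to $\{ P_g \}$, I would then set $f_{g,h}' := \alpha_{gh} \circ f_{g,h} \circ ( \alpha_g \otimes \alpha_h )^{-1}$ for $(g,h) \in G_2$. A short check shows that $f_{g,h}'$ is an isomorphism of $A_{c(g)}$-$A_{d(h)}$-bimodules from $P_g' \otimes_{A_{d(g)}} P_h'$ onto $\epsilon_g P_{gh}' = P_{gh}' \epsilon_{h^{-1}}$: indeed $\alpha_{gh}( \epsilon_g P_{gh} ) = \epsilon_g P_{gh}'$ since $\epsilon_g = 1_{I_g} \in Z(A_{c(g)})$ and $\alpha_{gh}$ is $A_{c(g)}$-linear, while $\epsilon_g P_{gh}' = P_{gh}' \epsilon_{h^{-1}}$ by Proposition~\ref{paction} applied to the family $\{ P_g' \}$. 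The only real computation is then to verify that $f'$ satisfies the cocycle diagram \eqref{ASSOCIATIVE}. Here I would argue that, for $(g,h,r) \in G_3$, each of the four arrows of \eqref{ASSOCIATIVE} written for $f'$ equals the corresponding arrow for $f$ pre-composed with a tensor product of the $\alpha$'s (or their inverses) and post-composed with $\alpha_{ghr}$; after the intermediate factors $\alpha_{hr}^{-1}\alpha_{hr}$ and $\alpha_{gh}^{-1}\alpha_{gh}$ cancel, both composites around the square equal $\alpha_{ghr}$ applied to the corresponding composite for $f$ and then precomposed with $( \alpha_g \otimes \alpha_h \otimes \alpha_r )^{-1}$, so commutativity for $f'$ follows from commutativity for $f$.

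Finally I would observe that the identity $f_{g,h}' \circ ( \alpha_g \otimes \alpha_h ) = \alpha_{gh} \circ f_{g,h}$ is exactly the commutativity of \eqref{MORPHISM}, so $\alpha = ( \alpha_g )_{g \in G_1}$ is a morphism from $f$ to $f'$ with each component bijective; Lemma~\ref{alpha} then provides an isomorphism of graded rings $(F,f) \rightarrow (F,f')$, and since $\alpha_{{\rm id}_e} = {\rm id}_{A_e}$ this isomorphism restricts to the identity on $R = \oplus_{e \in G_0} S_e$ and is therefore also an $A$-bimodule isomorphism, which is the form of equivalence relevant for $C(A,F)$. I do not anticipate a genuine obstacle: the argument is essentially \emph{transport of structure}. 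The points that demand a little care are the normalisation $P_{{\rm id}_e} = A_e$ (so that the resulting isomorphism is the identity in degrees $e \in G_0$), the slightly tedious but routine diagram chase for \eqref{ASSOCIATIVE}, and the one place where an $I_g$-$I_{g^{-1}}$-bimodule isomorphism must be upgraded to an $A_{c(g)}$-$A_{d(g)}$-bimodule isomorphism by invoking uniqueness of the bimodule extension.
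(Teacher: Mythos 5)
Your argument is correct and is essentially the paper's own proof: both transport the factor set along a family of bimodule isomorphisms $\alpha_g : P_g \rightarrow P_g'$ (which exist by the definition of $\sim$ and extend to $A_{c(g)}$-$A_{d(g)}$-bimodule maps), observe that the conjugated family is again a partial factor set making \eqref{MORPHISM} commute, and invoke Lemma~\ref{alpha} to obtain the graded ring isomorphism. Your additional remarks (normalising $P_{{\rm id}_e}=A_e$ and spelling out the \eqref{ASSOCIATIVE} check) only make explicit what the paper leaves implicit.
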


\begin{proof}
Put $F(g) = \left[ {}_{A_{c(g)}}^{I_g} (P_g)_{A_{d(g)}}^{J_g} \right] = 
\left[ {}_{A_{c(g)}}^{I_g} (P_g')_{A_{d(g)}}^{J_g} \right]$, for $g \in G_1$. 
Then there exists an $A_{c(g)}$-$A_{d(g)}$-bimodule homomorphism 
$\alpha_{g} : P_{g} \rightarrow P_{g}'$. 
If we now put $f'_{g,h} = \alpha^{-1}_{gh}\circ f_{gh}\circ 
(\alpha_g\otimes \alpha_h),$ for $(g,h)\in G_2$, then $f'$ is
a factor set associated with $F$ and \eqref{MORPHISM} commutes.
\end{proof}

\section{Partial Cohomology of groupoids}\label{sectionpartialcohomology}

In this section, we extend the construction of a partial 
cohomology theory for partial actions of groups on 
commutative monoids, from \cite{DKh}, to partial actions of groupoids. 
We follow closely the presentation and the proofs in \cite{DKh}.
Partial actions of groupoids on rings were
first studied in \cite{bagio2012}. 
Partial actions of categories on sets and topological spaces
have been introduced in \cite{nystedt2017}.
For the rest of this section,
let $G$ be a groupoid and suppose that
$B$ is the product of a collection of commutative semigroups $\{ B_e \}_{e \in G_0}$.

\begin{rem}
Let $\{ \theta_g \}_{g \in G_1}$
be a unital  partial groupoid action of $G$ on $B$\footnote{In the sense of Definition~\ref{defiunitalpartialaction}, but in this case $\{\theta_g\}_{g\in G_1}$ is a family of semigroup isomorphisms.}
Note that if $e \in G_0$ and $C$ and $D$ are unital 
ideals of $B_e$, then $C \cap D = CD$ so it follows from 
\cite[Lemma 1.1]{bagio2012} that the properties (G2) and (G3) can be replaced by
\begin{itemize}

\item[(G2$'$)] if $(g,h) \in G_2$, then $\theta_g ( B_{g^{-1}} B_h ) = B_g B_{gh}$, and

\item[(G3$'$)] if $(g,h) \in G_2$ and $x \in B_{h^{-1}} B_{h^{-1} g^{-1}}$,
then $\theta_g( \theta_h ( x ) ) = \theta_{gh} (x)$,

\end{itemize}
respectively. 

A unital partial $G$-module is a pair $(B,\theta),$ 
where $B$ is a commutative monoid and $\theta$ is a unital partial action of $G$ on $B$.  
\end{rem}

\begin{defi}
A morphism $\psi\colon (B,\theta)\to (B',\theta')$ of 
partial $G$-modules is a set of monoid homomorphisms
$\psi=\{\psi_{c(g)}:B_{c(g)}\to B'_{c(g)}\}_{g\in G}$ such that
\begin{itemize}
\item $\psi_{c(g)}( B_g)\subseteq B'_g,$
\item $\theta'_g \circ \psi_{c(g)}=\psi_{c(g)} \circ \theta_g \,\,\text{ on}\,\, B_{g\m},$
\end{itemize}
for all $g\in G_1.$

Recall that, if $n \geq 2$, then we let $G_n$ denote the set of
all $(g_1,\ldots,g_n) \in \times_{i=1}^n G_1$
that are composable, that is, such that
for every $i \in \{ 1,\ldots,n-1 \}$ the relation $d(g_i) = c(g_{i+1})$ holds.

We denote by $pMod(G)$ the category of (unital) partial $G$-modules. 
Sometimes, for convenience  $(B,\theta)$ will be replaced by $B.$
Suppose that $B\in pMod(G).$ 
An \emph{$n$-cochain of $G$ with values in $B$}
is a function $f : G_n \rightarrow B$ such that
for every $(g_1,\ldots,g_n) \in G_n$,
$f(g_1,\ldots,g_n)$ is an invertible element of 
$B_{(g_1,\ldots,g_n)} = 
B_{g_1} B_{g_1 g_2} \cdots B_{g_1 \cdots g_n}$. 
Denote the set of $n$-cochains by $C^n(G,B)$.
We let $C^0(G,B)$ denote $U(B)$, the group of units in $B$.
\end{defi}

\begin{prop} Let $B\in pMod(G).$ Then
$C^n(G,B)$ is an abelian group under pointwise multiplication.
\end{prop}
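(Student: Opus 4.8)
The plan is to identify $C^n(G,B)$, equipped with pointwise multiplication, with the direct product $\prod_{(g_1,\ldots,g_n)\in G_n} U(B_{(g_1,\ldots,g_n)})$ of the unit groups of the monoids $B_{(g_1,\ldots,g_n)}$, and then to invoke the fact that a direct product of abelian groups is again an abelian group.

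First I would check that, for each $(g_1,\ldots,g_n)\in G_n$, the set $B_{(g_1,\ldots,g_n)} = B_{g_1}B_{g_1g_2}\cdots B_{g_1\cdots g_n}$ is a commutative monoid in its own right. Since the tuple is composable, each of the morphisms $g_1, g_1g_2, \ldots, g_1\cdots g_n$ has codomain $c(g_1)$, so each factor $B_{g_1\cdots g_k}$ is a unital ideal of the commutative monoid $B_{c(g_1)}$, generated by the central idempotent $1_{g_1\cdots g_k}$ of Definition~\ref{defiunitalpartialaction}. By the commutative-monoid analogue of Remark~\ref{rem:unitalideal} (used already in this section in the form $C\cap D = CD$ for unital ideals of $B_e$), a finite product of unital ideals of a commutative monoid is again a unital ideal whose identity element is the product of the corresponding idempotents. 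Hence $B_{(g_1,\ldots,g_n)}$ is a unital commutative submonoid of $B_{c(g_1)}$ with identity $1_{(g_1,\ldots,g_n)} := 1_{g_1}1_{g_1g_2}\cdots 1_{g_1\cdots g_n}$, and therefore its set of units $U(B_{(g_1,\ldots,g_n)})$ is an abelian group.

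Next I would observe that, by its very definition, an $n$-cochain is exactly a choice, for each $(g_1,\ldots,g_n)\in G_n$, of an element of $U(B_{(g_1,\ldots,g_n)})$; pointwise multiplication of cochains is componentwise multiplication in this product; and the product of two units of a fixed commutative monoid is again a unit, so $C^n(G,B)$ is closed under the operation. The identity element is the cochain $(g_1,\ldots,g_n)\mapsto 1_{(g_1,\ldots,g_n)}$, the inverse of $f$ sends $(g_1,\ldots,g_n)$ to the inverse of $f(g_1,\ldots,g_n)$ in $U(B_{(g_1,\ldots,g_n)})$, and associativity and commutativity are inherited componentwise. The case $n=0$ is immediate, since $C^0(G,B) = U(B)$ by definition and $B$ is a commutative monoid.

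The only genuine verification is that $B_{(g_1,\ldots,g_n)}$ carries a monoid structure, i.e.\ that the product of the unital ideals $B_{g_1\cdots g_k}$ is again unital; this is the monoid counterpart of Remark~\ref{rem:unitalideal} and is precisely where the unitality hypothesis in Definition~\ref{defiunitalpartialaction} enters. Beyond that, the statement is pure bookkeeping about units of commutative monoids and direct products of abelian groups.
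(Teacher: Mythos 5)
Your proof is correct and takes essentially the same route as the paper: both verify that each $B_{(g_1,\ldots,g_n)}$ is a unital commutative monoid with identity $1_{g_1}1_{g_1g_2}\cdots 1_{g_1\cdots g_n}$, take this product of idempotents as the identity cochain, and invert and multiply cochains pointwise, handling $n=0$ separately. You merely spell out a few steps (closure under products of units, why all the factors are unital ideals of $B_{c(g_1)}$) that the paper leaves as clear.
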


\begin{proof}
This is clear if $n=0$. Now suppose that $n \geq 1$.
Define $e_n \in C^n(G,B)$ in the following way.
Given $(g_1,\ldots,g_n) \in G_n$, put
$e_n(g_1,\ldots,g_n) = 1_{g_1} 1_{g_1 g_2} \cdots 1_{g_1 \cdots g_n}$.
It is clear that $e_n$ is an identity element of $C^n(G,B)$.
Given $f \in C^n(G,B)$ and $(g_1,\ldots,g_n) \in G_n$, put
$f^{-1}(g_1,\ldots,g_n) = f(g_1,\ldots,g_n)^{-1}$,
where the inverse is taken in $B_{(g_1,\ldots,g_n)}$.
It is clear that $ff^{-1} = f^{-1}f = e_n$.
\end{proof}

\begin{defi}
Let  $B\in pMod(G)$
and let $n$ be a non-negative integer.
Define a map $\delta_n : C^n(G,B) \rightarrow C^{n+1}(G,B)$
in the following way.
Take $b = (b_e)_{e \in \ob(G)} \in U(B)$ and $g \in G_1$.
Put 
\begin{displaymath}
	\delta^0(b)(g) = \theta_g ( 1_{g^{-1}} b_{d(g)}) b_{c(g)}^{-1}.
\end{displaymath}
Now suppose that $n$ is a positive integer.
Take $f \in C^n(G,B)$ and $(g_1,\ldots,g_{n+1})$ in $G_{n+1}$.
Put
{\small
\begin{align*}
	&\delta^n(f)(g_1,\ldots,g_{n+1}) = \\ 
& \theta_{g_1}( 1_{g_1^{-1}} f(g_2,\ldots,g_{n+1}) )
\left( \prod_{i=1}^n f(g_1,\ldots,g_i g_{i+1}, \ldots g_{n+1})^{ (-1)^i } \right) 
f(g_1,\ldots,g_n)^{ (-1)^{n+1} }.
\end{align*}
}%
\end{defi}

Adapting the proofs of \cite[Proposition 1.5]{DN} and  
\cite[Proposition 1.7]{DN} to our situation, we get the following.

\begin{prop}
Suppose that $B\in pMod(G),$
and that $n$ is a non-negative integer. Then the following assertions hold:
\begin{itemize}
\item[(a)] The map $\delta^n$ is a well-defined homomorphism of groups
$C^n(G,B) \rightarrow C^{n+1}(G,B)$ satisfying
$\delta^{n+1} \delta^n = e_{n+2}$;
\item[(b)] The map sending $B$ to the sequence $\{\delta^n\colon C^n(G,B) 
\rightarrow C^{n+1}(G,B)\}_{n\in \N}$ is a functor from 
$pMod(G)$ to the category of complexes of abelian groups.
\end{itemize}
\end{prop}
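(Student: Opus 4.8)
The plan is to transport the Dokuchaev--Khrypchenko computation \cite{DKh} for the partial cohomology of groups to the groupoid setting, making systematic use of the reformulations (G2$'$) and (G3$'$) of the partial-action axioms together with the fact, recorded in Remark~\ref{rem:unitalideal}, that for unital ideals $I$ and $J$ one has $I\cap J=IJ$.

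First I would verify that $\delta^n$ is well defined, i.e. that $\delta^n(f)(g_1,\ldots,g_{n+1})$ is genuinely an invertible element of $B_{(g_1,\ldots,g_{n+1})}=B_{g_1}B_{g_1g_2}\cdots B_{g_1\cdots g_{n+1}}$ (the case $n=0$ being a direct check). Since $f(g_2,\ldots,g_{n+1})\in U\!\left(B_{(g_2,\ldots,g_{n+1})}\right)$, the product $1_{g_1^{-1}}f(g_2,\ldots,g_{n+1})$ is invertible in the unital ideal $B_{g_1^{-1}}B_{g_2}B_{g_2g_3}\cdots B_{g_2\cdots g_{n+1}}$ of $B_{d(g_1)}$, and iterating (G2$'$) --- using that $\theta_{g_1}$, being a monoid isomorphism onto $B_{g_1}$, carries a product of unital ideals of $B_{g_1^{-1}}$ to the product of their images --- yields $\theta_{g_1}\!\left(B_{g_1^{-1}}B_{g_2}\cdots B_{g_2\cdots g_{n+1}}\right)=B_{g_1}B_{g_1g_2}\cdots B_{g_1\cdots g_{n+1}}=B_{(g_1,\ldots,g_{n+1})}$; hence the first factor of $\delta^n(f)(g_1,\ldots,g_{n+1})$ already lies in $U\!\left(B_{(g_1,\ldots,g_{n+1})}\right)$. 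Every remaining factor $f(g_1,\ldots,g_ig_{i+1},\ldots,g_{n+1})^{\pm 1}$ is an invertible element of a unital ideal of $B_{c(g_1)}$ that contains $B_{(g_1,\ldots,g_{n+1})}$, so on multiplying it by $1_{(g_1,\ldots,g_{n+1})}$ it becomes invertible in $B_{(g_1,\ldots,g_{n+1})}$; since the latter ideal is absorbing, the whole product lies in $U\!\left(B_{(g_1,\ldots,g_{n+1})}\right)$. That $\delta^n$ is a group homomorphism is then immediate from the pointwise multiplication on $C^n(G,B)$, the commutativity of $B$, the multiplicativity of the maps $\theta_{g_1}$, and the identity $\theta_{g_1}(1_{g_1^{-1}}xy)=\theta_{g_1}(1_{g_1^{-1}}x)\,\theta_{g_1}(1_{g_1^{-1}}y)$.

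The heart of the argument, and the step I expect to be the main obstacle, is the cocycle identity $\delta^{n+1}\delta^n=e_{n+2}$. I would expand $\delta^{n+1}(\delta^n(f))(g_1,\ldots,g_{n+2})$ and pair up the resulting factors exactly as in the classical alternating-sum cancellation. Two features are new relative to the group case: (i) each cancelling pair now leaves behind an idempotent $1_g$ rather than a literal $1$, and one has to check that the product of the surviving idempotents is precisely $1_{(g_1,\ldots,g_{n+2})}$; and (ii) the doubly-twisted term $\theta_{g_1}\!\left(1_{g_1^{-1}}\theta_{g_2}\!\left(1_{g_2^{-1}}f(g_3,\ldots,g_{n+2})\right)\right)$ must be collapsed, up to the appropriate idempotent, to $\theta_{g_1g_2}\!\left(1_{(g_1g_2)^{-1}}f(g_3,\ldots,g_{n+2})\right)$, which is exactly what (G3$'$) --- combined with $\theta_{g_1}$ being a monoid isomorphism on the relevant ideals --- provides. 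Once this bookkeeping is in place the verification is the (lengthy but routine) groupoid analogue of \cite[Proposition 1.5]{DN} and \cite{DKh}.

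Finally, for part (b): a morphism $\psi=\{\psi_e\}_{e\in G_0}\colon(B,\theta)\to(B',\theta')$ of partial $G$-modules induces, by post-composition $f\mapsto\psi\circ f$, group homomorphisms $C^n(G,B)\to C^n(G,B')$ --- here one uses that $\psi_{c(g)}$ maps $B_g$ into $B'_g$ and the distinguished idempotent $1_g$ to $1'_g$, so that $\psi_{c(g_1)}$ sends $U\!\left(B_{(g_1,\ldots,g_n)}\right)$ into $U\!\left(B'_{(g_1,\ldots,g_n)}\right)$. The intertwining relations $\theta'_g\circ\psi=\psi\circ\theta_g$ together with the multiplicativity of $\psi$ then show that these homomorphisms commute with the coboundary maps $\delta^n$, so $\psi$ yields a morphism of complexes; compatibility with identities and with composition of morphisms is clear, which gives the asserted functor from $pMod(G)$ to the category of complexes of abelian groups.
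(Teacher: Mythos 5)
Your proposal is correct and takes essentially the same route as the paper, whose entire proof consists of the remark that the result follows by adapting the arguments of \cite[Propositions 1.5 and 1.7]{DN} (equivalently, the computations of \cite{DKh}) to the groupoid setting. Your sketch simply makes that adaptation explicit — iterating (G2$'$) for well-definedness, the idempotent bookkeeping and the (G3$'$) collapse for $\delta^{n+1}\delta^n=e_{n+2}$, and post-composition for functoriality — so it supplies more detail than the paper itself provides.
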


\begin{defi}
Let  $B\in pMod(G)$
and let $n$ be a positive integer.
The map $\delta^n$ is called a \emph{coboundary homomorphism}.
We define the abelian groups $Z^n(G,B) = {\rm ker}( \delta^n )$,
$B^n(G,B) = {\rm im}( \delta^{n-1} )$.
The quotient group $H^n(G,B) = Z^n(G,B) / B^n(G,B)$
is called \emph{the $n$th cohomology group of $G$ with values in $B$}.
We put $H^0(G,B) = Z^0(G,B) = {\rm ker}( \delta^0 )$.
\end{defi}

Let $G$ be a groupoid. Denote by $I(X)_{cat}$  the  subcategory of 
BIJ$_{\rm cat}$ having as objects the collection  
$(X_e)_{e \in G_0}$ of abelian semigroups and morphisms 
$[_{X_{c(g)}}^{X_g}{f_g}_{X_{d(g)}}^{X_{g\m}}],$ where $X_g$ is 
a unital ideal of $X_{c(g)}$ and  $f_g\colon X_{g\m}\to X_g$,  
is a monoid isomorphism for all $g \in G_1.$ 
The composition in $I(X)_{{\rm cat}}$ is defined in the same way as in  
ISO$_{cat},$ the map  $* : (I(X)_{cat})_1 \rightarrow ( I(X)_{cat})_1$ 
is also defined by restriction of the map $*$ defined on $( {\rm BIJ}_{cat} )_1$. 
It follows from Proposition~\ref{BIJCAT} that $I(X)_{cat}$ is an inverse category.

\begin{prop} 
If $G$ is a groupoid and $X = \prod_{e \in G_0} X_e$, 
then $X \in pMod(G),$ if and only if, there is a partial functor of 
inverse categories $F \colon G \to I(X)_{cat}.$ 
\end{prop}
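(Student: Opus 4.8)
The plan is to establish the claimed equivalence in two directions, exploiting the close structural parallel between the definition of a unital partial groupoid action (axioms (G1), (G2$'$), (G3$'$)) and the definition of a partial functor of inverse categories (axioms (I1), (I2), (I3)). The basic dictionary is: objects $e \in G_0$ correspond to the semigroups $X_e$ (so $F_0(e) = X_e$), and a morphism $g \in G_1$ corresponds to the partial bijection $[{}_{X_{c(g)}}^{X_g} (\theta_g)_{X_{d(g)}}^{X_{g^{-1}}}]$ in $I(X)_{cat}$, where $X_g$ is the unital ideal of $X_{c(g)}$ appearing in the partial action and $\theta_g : X_{g^{-1}} \to X_g$ is the given semigroup isomorphism. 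So $F_1(g)$ is precisely $\theta_g$ packaged as a morphism of $I(X)_{cat}$.

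For the direction ``$X \in pMod(G) \Rightarrow$ there is a partial functor $F$'', I would define $F$ by the dictionary above and check (I1)--(I3). Axiom (I1) is immediate from the fact that $X_g \subseteq X_{c(g)}$ and $X_{g^{-1}} \subseteq X_{d(g)} = X_{c(g^{-1})}$, which makes the domain and codomain of $F_1(g)$ correct. Axiom (I2) is exactly (G1): $\theta_e = \mathrm{id}_{X_e}$ gives $F_1(\mathrm{id}_e) = [{}_{X_e}^{X_e}(\mathrm{id}_{X_e})_{X_e}^{X_e}] = \mathrm{id}_{F_0(e)}$. The content is in (I3): one must show $F_1(g)F_1(h) = F_1(g)F_1(g^*)F_1(gh) = F_1(gh)F_1(h^*)F_1(h)$ for $(g,h) \in G_2$. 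Here I would compute the composite partial bijections using the composition rule in $I(X)_{cat}$ (the same as in ${\rm ISO}_{cat}$, namely ${}_A^{f(JI')}f|_{JI'} \circ g|_{g^{-1}(JI')}{}_C^{g^{-1}(JI')}$) and translate this into statements about products and images of unital ideals; since $C \cap D = CD$ for unital ideals, (G2$'$) — which says $\theta_g(X_{g^{-1}}X_h) = X_g X_{gh}$ — and (G3$'$) — compatibility of composition of the $\theta$'s — are precisely what is needed to identify both composites with the partial bijection supported on $X_g X_{gh}$ given by (a restriction of) $\theta_{gh}$. The middle term $F_1(g)F_1(g^*)F_1(gh)$ unwinds using $F_1(g)F_1(g^*) = [{}_{X_{c(g)}}^{X_g}(\mathrm{id}_{X_g})_{X_{c(g)}}^{X_g}]$ (from Proposition~\ref{BIJCAT}), so composing with $F_1(gh)$ just cuts $\theta_{gh}$ down to the ideal $X_g X_{gh}$; similarly for the right-hand term using $F_1(h^*)F_1(h)$.

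For the converse, given a partial functor $F : G \to I(X)_{cat}$, I would read off a partial action: $\theta_g$ is the underlying semigroup isomorphism of $F_1(g)$, with domain ideal $X_{g^{-1}}$ (the codomain-side ideal, which equals the support of $F_1(g)^* = F_1(g^*)$ by Proposition~\ref{propstar}) and image ideal $X_g$. Then (G1) follows from (I2), and (G2$'$), (G3$'$) follow by reversing the computation above: axiom (I3) applied to $F$ forces the two composite partial bijections to agree, and unpacking the supports of these composites yields $\theta_g(X_{g^{-1}}X_h) = X_g X_{gh}$, while unpacking the maps themselves yields that $\theta_{gh}$ restricted appropriately equals $\theta_g \circ \theta_h$ on the relevant ideal. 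One also needs $\theta_{g^{-1}} = \theta_g^{-1}$ with the matching ideals, which is exactly Proposition~\ref{propstar} ($F(g^*) = F(g)^*$) together with the definition of $*$ in ${\rm BIJ}_{cat}$.

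The main obstacle I anticipate is purely bookkeeping: carefully matching the intersections-of-images that appear in the composition formula for $I(X)_{cat}$ (things like $g^{-1}(Y \cap X')$) with the ideal-product expressions in (G2$'$) and (G3$'$), and making sure the two a priori different composites in (I3) really coincide as \emph{partial} bijections (same support, same underlying map). The identity $C \cap D = CD$ for unital ideals (Remark~\ref{rem:unitalideal}) and the already-verified behavior of $F_1(g)F_1(g^*)$ as an ``idempotent'' restriction morphism are the tools that make this go through; once the translation is set up, each axiom corresponds to its partner essentially line by line, so there is no deep obstruction, only the need to be scrupulous about domains and codomains.
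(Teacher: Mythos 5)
Your proposal is correct and follows essentially the same route as the paper: the same dictionary $F(g)=[{}_{X_{c(g)}}^{X_g}(\theta_g)_{X_{d(g)}}^{X_{g^{-1}}}]$ in both directions, with the axioms (I1)--(I3) matched against the partial-action axioms by unwinding the composition in $I(X)_{cat}$ and using $F(g^*)=F(g)^*$. The only difference is one of economy: the paper verifies the converse by reading the single identity $F(g)F(h)=F(gh)F(h^*)F(h)$ as ``$\theta_{gh}$ extends $\theta_g\circ\theta_h$'' and invoking the remark that this is equivalent to (G2)--(G3), whereas you check (G2$'$) and (G3$'$) directly by bookkeeping of supports, which amounts to the same thing.
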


\begin{proof} 
First we show the ''only if'' statement.
Let $\{\theta_g\colon X_{g\m}\to X_g\}_{g \in G_1}$ 
be a partial action of $G$ on $X$ and define  
$F\colon G\to I(X)_{cat},$ by  
$F(g)=[_{X_{c(g)}}^{X_g}{\theta_g}_{X_{d(g)}}^{X_{g\m}}],$ 
for $g \in G_1,$ and $F_e=X_e,$ for any $e \in G_0$. 
Then $F$ is a partial functor of inverse categories.

Now we show the ''if'' statement.
Let $F\colon G\to I(X)_{{\rm cat}}$ be a partial functor of inverse categories.
Put $F(g) = [_{X_{c(g)}}^{X_g}{\theta_g}_{X_{d(g)}}^{X_{g\m}}],$ for $g \in G_1$. 
We shall show that the family $\{\theta_g\}_{g \in G_1}$ gives a partial action 
of $G$ on  $X=\prod_{e \in G_0} X_e.$ 
It is clear that for each $g \in G_1$, $B_g$ is an ideal of 
$B_{c(g)},$ $B_{c(g)}$ is an ideal of $B$ and
$\theta_g : B_{g^{-1}} \rightarrow B_g$ is a monoid isomorphism.   
Now we check (G1), (G2) and (G3) from Definition~\ref{defipartialaction}.

(G1): Let $e \in G_0.$ Then 
$F(e) =\left [ {}_{X_{e}}^{X_e} (\theta_e)_{X_{e}}^{X_e} \right],$  
and $\theta_e$ is the identity map on $X_e.$ 

(G2)-(G3): Let $(g,h)\in G_2.$ Then $F(g) F(h)  = F(gh) F(h)^* F(h),$ 
which by the definition of $F$ implies
$\theta_g\circ\theta_h=
\theta_{gh}\circ\theta\m_h\circ\theta_h=
\theta_{gh}\circ{\rm id}_{X_{h\m}},$ 
which in turn implies that $ \theta_{gh}$ is an extension of  $\theta_g \circ\theta_h.$
\end{proof}

Let $F : G \rightarrow {\rm PIC}_{cat}$ be a partial functor
of inverse categories.
For each $e \in G_0$ define the ring $A_e$ by $F(e) = A_e$. 
Take $g \in G_1$. Put
$F(g) =\left [ {}_{A_{c(g)}}^{I_g} (P_g)_{A_{d(g)}}^{I_{g^{-1}}} \right]$.
Define $B_g = Z(I_g)$ and $B = \prod_{e \in \ob(G)} Z(A_e)$.

\begin{prop}
With the above notation we have 
$B \in pMod(G).$
\end{prop}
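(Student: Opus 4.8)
The plan is to reduce the statement to the criterion established in the proposition immediately preceding it, which asserts that $X = \prod_{e \in G_0} X_e$ lies in $pMod(G)$ if and only if there is a partial functor of inverse categories $G \to I(X)_{cat}$. Thus it suffices to produce such a partial functor for the choice $X_e = Z(A_e) = B_e$.

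First I would form the composite $L \circ F : G \to {\rm ISOC}_{cat}$, where $L : {\rm PIC}_{cat} \to {\rm ISOC}_{cat}$ is the functor of Definition~\ref{defgamma}. By Proposition~\ref{lfunctor}, $L$ is a (partial) functor of inverse categories, and by Proposition~\ref{composition} the composite $L \circ F$ is again a partial functor of inverse categories. On objects $(L \circ F)(e) = Z(A_e)$, and on a morphism $g : d(g) \to c(g)$ of $G$ we obtain
\[
(L \circ F)(g) = L\!\left( \left[ {}_{A_{c(g)}}^{I_g} (P_g)_{A_{d(g)}}^{I_{g^{-1}}} \right] \right) = {}_{Z(A_{c(g)})}^{Z(I_g)} {\gamma_{P_g}}_{Z(A_{d(g)})}^{Z(I_{g^{-1}})},
\]
where $\gamma_{P_g} : Z(I_{g^{-1}}) \to Z(I_g)$ is the ring isomorphism of Remark~\ref{remarkgamma} attached to the equivalence data underlying $F(g)$.

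The key point to verify is that this composite actually takes values in the inverse subcategory $I(X)_{cat}$ for $X = (Z(A_e))_{e \in G_0}$; that is, that $B_g = Z(I_g)$ is a \emph{unital ideal} of $Z(A_{c(g)})$ and that $\gamma_{P_g}$ is a monoid isomorphism $B_{g^{-1}} \to B_g$. For the first assertion, recall that $I_g$ is a unital ideal of $A_{c(g)}$ with identity $\epsilon_g = 1_{I_g}$, a central idempotent of $A_{c(g)}$, so $I_g = A_{c(g)} \epsilon_g$; by Proposition~\ref{isomorphism} we then get $Z(I_g) = Z(A_{c(g)} \epsilon_g) = Z(A_{c(g)}) \epsilon_g$, which is a unital ideal of $Z(A_{c(g)})$ with identity $\epsilon_g$ (and symmetrically $Z(I_{g^{-1}}) = Z(A_{d(g)}) \epsilon_{g^{-1}}$ is a unital ideal of $Z(A_{d(g)})$). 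The second assertion is immediate, since a ring isomorphism is in particular an isomorphism of multiplicative monoids. Hence $L \circ F$ defines a partial functor of inverse categories $G \to I(X)_{cat}$.

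Applying the preceding proposition to $X = \prod_{e \in G_0} Z(A_e) = B$ then yields $B \in pMod(G)$, with the partial action being the family $\{\gamma_{P_g} : Z(I_{g^{-1}}) \to Z(I_g)\}_{g \in G_1}$ extracted from $L \circ F$. I expect the only mildly delicate step to be the identification $Z(I_g) = Z(A_{c(g)}) \epsilon_g$, i.e.\ checking that centers behave well under restriction to the unital ideal $I_g$; but this is precisely the content of Proposition~\ref{isomorphism}, so no genuine obstacle remains.
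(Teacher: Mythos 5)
Your proof is correct and follows essentially the same route as the paper: compose $F$ with the functor $L$ of Definition~\ref{defgamma}, invoke Propositions~\ref{lfunctor} and \ref{composition}, note that $(L\circ F)(g)$ lands in $I(X)_{cat}$ with $B_g = Z(I_g)$, and conclude via the preceding proposition. Your extra verification that $Z(I_g)=Z(A_{c(g)})\epsilon_g$ is a unital ideal (via Proposition~\ref{isomorphism}) is a detail the paper leaves implicit, and it is correct.
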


\begin{proof} 
By Proposition~\ref{lfunctor} there is a partial functor
of inverse categories $L : {\rm PIC}_{cat} \rightarrow {\rm ISOC}_{cat}$.
From Proposition~\ref{composition} it follows that 
$l = L \circ F \colon G \to {\rm ISOC}_{cat}$ 
is a partial functor of inverse categories. 
But $l(g)=\left [ {}_{B_{c(g)}}^{B_g} (\gamma_{P_g})_{B_{d(g)}}^{B_{g^{-1}}} \right],$
so $l\colon G\to I(X)_{cat}$ and hence we get that $B\in pMod(G).$
\end{proof}

\section{Proof of the main result}\label{proofmain}

In this section, we prove Theorem~\ref{maintheoremepsilon} 
which was stated in Section~\ref{sec:intro}.
For the convenience of the reader, we shall now recall its exact wording.

\begin{thmnonum}
If $G$ is a groupoid,
$F : G \rightarrow {\rm PIC}_{cat}$ is a partial functor of inverse categories
and $f$ is a partial factor set associated with $F$,
then the map $H^2 ( G , U(Z(A)) ) \rightarrow C(A,F)$,
defined by $[q] \mapsto qf$, is bijective.
\end{thmnonum}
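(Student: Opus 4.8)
The plan is to mimic the classical strategy behind Theorems~\ref{maintheorem} and~\ref{maintheoremgroupoid}, adapting every step to the partial/groupoid setting via the epsilon-crossed product machinery of Section~\ref{sectionepsiloncrossedproducts} and the partial cohomology of Section~\ref{sectionpartialcohomology}. First I would make precise the action of a cochain $q \in Z^2(G, U(Z(A)))$ on a partial factor set $f$: given $f = \{ f_{g,h} \}$, define $(qf)_{g,h} : P_g \otimes_{A_{d(g)}} P_h \to \epsilon_g P_{gh}$ by $(qf)_{g,h}(x \otimes y) = q(g,h) \cdot f_{g,h}(x \otimes y)$, where $q(g,h) \in U(Z(I_g I_{gh})) = U(Z(\epsilon_g \epsilon_{gh} A_{c(g)}))$ acts on $\epsilon_g P_{gh}$ through the module structure. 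The first check is that $qf$ is again a partial factor set associated with $F$: one must verify that the hexagon/square \eqref{ASSOCIATIVE} for $qf$ holds, and this reduces precisely to the $2$-cocycle identity $\delta^2(q) = e_3$ evaluated on $(g,h,r) \in G_3$, using the compatibility of $q$ with the $\gamma_{P}$ maps (Remark~\ref{remarkgamma}) and Proposition~\ref{paction} to move the idempotents $\epsilon_g, \epsilon_{h^{-1}}$ around. So the map $q \mapsto qf$ sends $Z^2 \to (A,F)$, and $[q] \mapsto qf$ (mod $\approx$) is what must be shown well-defined and bijective.

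\emph{Well-definedness and injectivity.} For well-definedness I would show that if $q \in B^2(G,B)$, i.e. $q = \delta^1(t)$ for some $1$-cochain $t = \{ t_g \}$ with $t_g \in U(Z(I_g))$, then $qf \approx f$; the graded $A$-bimodule isomorphism $(F,qf) \to (F,f)$ is given componentwise by $\alpha_g = $ multiplication by $t_g$ on $P_g$ (with $\alpha_e = \mathrm{id}$), and the commuting square \eqref{MORPHISM} becomes exactly the coboundary formula $q(g,h) = \theta_g(1_{g^{-1}} t_{d(g)}) \cdots$ — more precisely the groupoid analogue $q(g,h) = \gamma_{P_g}(t_h) \, t_{gh}^{-1} \, t_g$ one reads off from $\delta^1$. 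Conversely, if $qf \approx q'f$ via some $\alpha = (\alpha_g)$, then each $\alpha_g$ is an $A_{c(g)}$-$A_{d(g)}$-bimodule automorphism of $P_g$, hence by Proposition~\ref{propequivalence} (applied to the equivalence data $(\epsilon_g R, \epsilon_{g^{-1}}R, S_g, S_{g^{-1}}, \ldots)$ of Proposition~\ref{tensor}(d)) given by a unique unit $t_g \in U(Z(I_g))$; chasing \eqref{MORPHISM} then yields $q'(g,h)/q(g,h) = \delta^1(t)(g,h)$, so $[q] = [q']$ in $H^2$. These two directions together give that $[q] \mapsto [qf]$ is a well-defined injection $H^2(G, U(Z(A))) \hookrightarrow C(A,F)$.

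\emph{Surjectivity.} Given any $D \in (A,F)$, by definition $D = (F, f')$ for some partial factor set $f'$ associated with the \emph{same} $F$. Since for each $(g,h) \in G_2$ both $f_{g,h}$ and $f'_{g,h}$ are $A_{c(g)}$-$A_{d(h)}$-bimodule isomorphisms $P_g \otimes_{A_{d(g)}} P_h \to \epsilon_g P_{gh}$, the composite $f'_{g,h} \circ f_{g,h}^{-1}$ is a bimodule automorphism of $\epsilon_g P_{gh}$, hence — again via Proposition~\ref{propequivalence}/Remark~\ref{remarkgamma} applied to the relevant equivalence data — equals multiplication by a unique unit $q(g,h) \in U(Z(\epsilon_g \epsilon_{gh} A_{c(g)})) \subseteq U(B_{(g,h)})$. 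Thus $f' = qf$ as a function $G_2 \to$ (bimodule maps), and the fact that $f'$ satisfies \eqref{ASSOCIATIVE} forces $\delta^2(q) = e_3$, i.e. $q \in Z^2(G, U(Z(A)))$. Hence $[q] \mapsto [qf] = [D]$, proving surjectivity.

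\emph{Main obstacle.} The technical heart — and the step I expect to be most delicate — is the careful bookkeeping of the \emph{local} idempotents $\epsilon_g$ and the twisting isomorphisms $\gamma_{P_g}$: one must check that $q(g,h)$ naturally lives in $U(Z(I_g I_{gh}))$, that the products $\prod_i f(\ldots)^{(-1)^i}$ defining $\delta^2$ land in the correct unital ideal $B_{(g_1,\ldots,g_{n})}$, and that the cocycle identity for $q$ is genuinely \emph{equivalent} to — not merely implied by — associativity of $(F,qf)$. Matching the groupoid coboundary formulas from Section~\ref{sectionpartialcohomology} with the bimodule-automorphism description coming from Proposition~\ref{propequivalence} (the map $U(Z(J)) \to \Aut_{I\text{-}J}(P)$) is where the partial structure genuinely complicates the classical argument, and it is there that Proposition~\ref{paction} and the identity $\epsilon_g P_{gh} = P_{gh}\epsilon_{h^{-1}}$ do the essential work of making all the composites well-defined.
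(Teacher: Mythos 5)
Your proposal is correct and follows essentially the same route as the paper: the paper's Proposition~\ref{proplast} establishes exactly your three steps (twisting by a cocycle preserves the factor-set condition via \eqref{ASSOCIATIVE}, any two factor sets for the same $F$ differ by a cocycle read off from Proposition~\ref{propequivalence}, and coboundaries correspond precisely to graded bimodule isomorphisms $(F,f)\to(F,qf)$), from which the theorem follows immediately. The bookkeeping with the idempotents $\epsilon_g$ and the maps $\gamma_{P_g}$ that you flag as the delicate point is handled in the paper just as you outline it.
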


In order to prove the above theorem, we need the following result.

\begin{prop}\label{proplast}
Let $f$ and $f'$ be factor sets associated with $F$.
\begin{itemize}
\item[{\rm (a)}] If $q \in Z^2(G , B)$, then $fq$ is a factor
set associated with $F$.

\item[{\rm (b)}] There is $q \in Z^2(G , B)$ such that $f' =
qf$.

\item[{\rm (c)}] A cocycle $q \in Z^2(G , B)$ belongs to
$B^2(G , B)$ if and only if there is a graded ring
isomorphism $\alpha$ from $(F,f)$ to $(F,qf)$ such that each
for all $g \in G_1$ the graded restriction $\alpha_g$ to $P_g$
is an $A_{c(g)}$-$A_{d(g)}$-bimodule isomorphism.

\item[(d)] The map from $Z^2(G , B)$ to the collection of factor
sets associated with $F$, defined by $q \mapsto qf$, is bijective.  
\end{itemize}
\end{prop}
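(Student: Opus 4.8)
The plan is to prove the four parts in the order (a), (b), (c), (d), since each builds on the previous ones, and to lean heavily on the definition of the coboundary maps $\delta^n$ and the machinery of Proposition~\ref{tensor} and the composition formula \eqref{proddata}.

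\textbf{Part (a).} I would start by recording what it means for $q \in Z^2(G,B)$: for each $(g,h) \in G_2$ we have $q(g,h) \in U(B_g B_{gh})$, i.e. $q(g,h)$ is an invertible element of $Z(I_g)Z(I_{gh}) = Z(\epsilon_g \epsilon_{gh} A_{c(g)})$, and the cocycle identity $\delta^2(q) = e_3$ holds, which written out for $(g,h,r) \in G_3$ reads
\begin{displaymath}
\theta_g\bigl(1_{g^{-1}} q(h,r)\bigr)\, q(gh,r)^{-1}\, q(g,hr)\, q(g,h)^{-1} = 1_{g}1_{gh}1_{ghr},
\end{displaymath}
where $\theta_g = \gamma_{P_g}$ here. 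Now define $(fq)_{g,h} : P_g \otimes_{A_{d(g)}} P_h \to \epsilon_g P_{gh}$ by $(fq)_{g,h}(x \otimes y) = q(g,h) \cdot f_{g,h}(x \otimes y)$; since $q(g,h)$ lies in the unit group of the center of $\epsilon_g\epsilon_{gh}A_{c(g)}$ and $f_{g,h}$ already has image $\epsilon_g P_{gh} = \epsilon_g\epsilon_{gh}P_{gh}$, multiplication by $q(g,h)$ is an $A_{c(g)}$-$A_{d(h)}$-bimodule automorphism of $\epsilon_g P_{gh}$, so $(fq)_{g,h}$ is again an isomorphism. The remaining point is that diagram \eqref{ASSOCIATIVE} commutes for $fq$. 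Chasing $x \otimes y \otimes z$ around both ways and using that \eqref{ASSOCIATIVE} already commutes for $f$, the discrepancy between the two composites is exactly multiplication by the element
\begin{displaymath}
\gamma_{P_g}\bigl(1_{g^{-1}} q(h,r)\bigr)\, q(g,hr)\, q(g,h)^{-1}\, q(gh,r)^{-1},
\end{displaymath}
acting on $\epsilon_g P_{ghr}\epsilon_{r^{-1}}$; here the factor $\gamma_{P_g}(1_{g^{-1}}q(h,r))$ arises from moving $q(h,r) \in Z(I_h)$ across the tensor $P_g \otimes -$ via Remark~\ref{remarkgamma}, after first restricting with $1_{g^{-1}}$ because the tensor is over $A_{d(g)} = A_{c(h)}$ and only $\epsilon_{g^{-1}}P_h$ survives. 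By the cocycle identity this element equals the relevant identity idempotent, so the diagram commutes. The care needed with which idempotents appear is the one genuinely fiddly point in (a).

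\textbf{Part (b).} Given two factor sets $f, f'$ for the same $F$, for each $(g,h) \in G_2$ both $f_{g,h}$ and $f'_{g,h}$ are $A_{c(g)}$-$A_{d(h)}$-bimodule isomorphisms $P_g \otimes_{A_{d(g)}} P_h \to \epsilon_g P_{gh}$, so $f'_{g,h} \circ f_{g,h}^{-1}$ is a bimodule automorphism of $\epsilon_g P_{gh}$. By Proposition~\ref{propequivalence} applied to the equivalence data sextuple of Proposition~\ref{tensor}(d) (for the object $\epsilon_{gh}A_{c(g)}$ and its invertible bimodule $\epsilon_g P_{gh}$, or more directly $P_{gh}$ itself as an $I_{gh}$-bimodule), the group of such automorphisms is canonically $U(Z(I_g)Z(I_{gh}))$; call the corresponding element $q(g,h)$, so that $f'_{g,h} = q(g,h) f_{g,h}$, i.e. $f' = qf$. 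It remains to check $q \in Z^2(G,B)$: writing the commuting diagram \eqref{ASSOCIATIVE} for both $f$ and $f'$ and comparing, exactly the computation from (a) run backwards shows $\delta^2(q) = e_3$. So $q$ is a cocycle.

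\textbf{Part (c).} Suppose first $q = \delta^1(c)$ for some $1$-cochain $c \in C^1(G,B)$, so $c_g \in U(Z(I_g))$ for each $g$ and $q(g,h) = \gamma_{P_g}(1_{g^{-1}}c_h)\, c_{gh}^{-1}\, c_g$. Define $\alpha_g : P_g \to P_g$ by $\alpha_g(x) = c_g \cdot x$ (note $\alpha_g$ is the identity on $P_g$ outside of $\epsilon_g$, so really $\alpha_g(x) = c_g x$ makes sense since $c_g \epsilon_g = c_g$); this is a bimodule automorphism. Then the diagram \eqref{MORPHISM} comparing $f$ and $qf$ via $\alpha$ commutes precisely because $q(g,h)$ has the stated coboundary form — the $c_g$, $c_h$ get pulled through the tensor (again $c_h$ becomes $\gamma_{P_g}(1_{g^{-1}}c_h)$) and the identity on the other side is multiplication by $c_{gh}$. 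By Lemma~\ref{alpha}, since each $\alpha_e$, $e \in G_0$, is bijective, $\alpha$ is a graded ring isomorphism $(F,f) \to (F,qf)$ with the required bimodule property. Conversely, given such an $\alpha$, each $\alpha_g$ is a bimodule automorphism of $P_g$, hence (Proposition~\ref{propequivalence} again) equals multiplication by a unique $c_g \in U(Z(I_g))$; the commuting diagram \eqref{MORPHISM} then forces $q(g,h) = \gamma_{P_g}(1_{g^{-1}}c_h)c_{gh}^{-1}c_g = \delta^1(c)(g,h)$, so $q \in B^2(G,B)$.

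\textbf{Part (d).} The map $q \mapsto qf$ is well-defined into factor sets by (a) and surjective by (b). For injectivity, if $qf = q'f$ then for every $(g,h)$, $q(g,h)f_{g,h} = q'(g,h)f_{g,h}$ as maps, and cancelling the isomorphism $f_{g,h}$ and using that the action of $U(Z(I_g)Z(I_{gh}))$ on the invertible bimodule $\epsilon_g P_{gh}$ is faithful (Proposition~\ref{propequivalence}) gives $q(g,h) = q'(g,h)$; hence $q = q'$. This proves (d), and the theorem then follows by passing to the quotients: (b) and (c) say the fibers of $q \mapsto [(F,qf)]$ over $C(A,F)$ are exactly the cosets of $B^2(G,B)$, while (d) identifies the source with $Z^2(G,B)$, so $[q] \mapsto qf$ descends to a bijection $H^2(G,U(Z(A))) \to C(A,F)$.

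\textbf{Main obstacle.} The one step requiring genuine care, rather than bookkeeping, is tracking the idempotents $\epsilon_g$ and the twist $\gamma_{P_g}$ when a central element of $Z(I_h)$ is moved across a tensor product $P_g \otimes_{A_{d(g)}} P_h$: one must restrict by $\epsilon_{g^{-1}} = 1_{g^{-1}}$ first and then apply $\gamma_{P_g}$, and verifying that this is exactly the operation appearing in the definition of $\delta^2$ (and of $\delta^1$ in part (c)) is what makes the cocycle and coboundary identities line up with the commutativity of \eqref{ASSOCIATIVE} and \eqref{MORPHISM}. Once that dictionary is set up cleanly — ideally stated as a short preliminary lemma before the proof — parts (a)--(d) are essentially forced.
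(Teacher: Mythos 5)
Your proposal is correct and follows essentially the same route as the paper: twist $f$ by central units via Proposition~\ref{propequivalence}, match the commutativity of \eqref{ASSOCIATIVE} and \eqref{MORPHISM} with the cocycle and coboundary identities (including the $\gamma_{P_g}(1_{g^{-1}}\,\cdot\,)$ twist when moving central elements across the tensor), and deduce (d) from the first three parts. Your explicit injectivity argument in (d) and the careful placement of $q(g,h)$ in $U(Z(I_g)Z(I_{gh}))$ are only minor refinements of the paper's argument, not a different method.
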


\begin{proof}
(a) Put $f'' = fq$. We need to verify that
\eqref{ASSOCIATIVE} commutes for $f''$. Take $(g,h,p) \in G_3$, 
$x \in P_g$, $y \in P_h$ and $z \in P_p$. Then
{\small
\[
\begin{array}{rcl}
(f_{gh,p}'' \circ (f_{g,h}'' \otimes {\rm id}_{P_p} ) ) (x \otimes y \otimes z) 
&=& q_{gh,p} q_{g,h} (f_{gh,p} \circ (f_{g,h} \otimes {\rm id}_{P_p} )) (x \otimes y \otimes z) \\
&=& (q_{g,hp} \gamma_{P_g}(q_{h,p}))
( f_{g,hp} \circ ( {\rm id}_{P_g} \otimes
f_{h,p}) ) (x \otimes y \otimes z) \\
&=& (q_{g,hp} \gamma_{P_g}(q_{h,p}))
f_{g,hp} (x \otimes f_{h,p}(y \otimes z)) \\
&=& f_{g,hp}'' ( \gamma_{P_g}( q_{h,p} ) x \otimes f_{h,p}(y \otimes z))) \\
&=& f_{g,hp}'' (x q_{h,p} \otimes f_{h,p}(y \otimes z)) \\
&=& f_{g,hp}''(x \otimes f_{h,p}'' (y \otimes z)) \\
&=& (f_{g,hp}'' \circ ({\rm id}_{P_{\sigma}} \otimes f_{h,p}'')) (x \otimes y \otimes x).
\end{array}
\] 
}

(b) Take $(g,h) \in G_2$.
Then $f_{g,h}' \circ f_{g,h}^{-1}$ is an $A_{c(\sigma)}$-$A_{d(\sigma)}$-bimodule
automorphism of $P_{gh}$. Hence, by Proposition~\ref{propequivalence}, 
there is $q_{g,h} \in U( Z( I_{c(g)} ) )$ such that
$(f_{g,h}' \circ f_{g,h}^{-1}) (x) = q_{\sigma,\tau} x$, for $x \in P_{gh}$.
By \eqref{ASSOCIATIVE} it follows that $q \in Z^2(G,B)$.

(c) Suppose now that $q \in B^2(G,A)$. Then there is $c \in C^1(G,A)$ 
such that for all $(g,h) \in G_2$ it follows that
$q_{g,h} = \gamma_{P_g}(c_h) c_g c_{gh}^{-1}$.
Define a map $\alpha$ from $(F,qf)$ to $(F,f),$ by 
$\alpha(x) = c_g x $, for $x \in P_g$. 
If $x \in P_g$, $y \in P_h$ and $(g,h) \in G_2$, then 
$\alpha(xy) = 
c_{gh} q_{g,h} f_{g,h}(x \otimes y) =
q_{g,h}^{-1} \gamma_{P_g}(c_h) c_g q_{g,h} f_{g,h} (x \otimes y) = 
f_{g,h} ( \gamma_{P_g} x \otimes c_h y) = 
\alpha(x)\alpha(y)$. 
Clearly, for all $g \in G_1$, the map $\alpha_g$ is an
$A_{c(g)}$-$A_{d(g)}$-bimodule isomorphism.

On the other hand, suppose that there is an isomorphism of graded
rings $\beta$ from $(F,qf)$ to $(F,f)$ such that for all $g \in G_1$
the map $\beta_g$ is an
$A_{c(g)}$-$A_{d(g)}$-bimodule isomorphism. 
From Proposition~\ref{propequivalence} it follows that
for each $g \in G_1$ there is $d_g \in U(Z(I_{ c(g) }) )$ such that
for all $x \in P_g$ the equation $\beta_g(x) = d_g x$ holds.
Therefore, for all $x \in P_g$, $y \in P_h$ and all 
$(g,h) \in G_2$, we get that 
$\beta(xy) = \beta(x)\beta(y) \Leftrightarrow
d_{gh} q_{g,h} f_{g,g}(x \otimes y) =
f_{g,h}(d_g x \otimes d_h y) \Leftrightarrow
d_{gh} q_{g,h} f_{g,h} (x \otimes y) =
d_g \gamma_{P_g} (d_h) f_{g,h} (x \otimes y)$. 
Thus, $q \in B^2(G,A)$.

(d) This follows from (a), (b) and (c).
\end{proof}

\begin{defi}
If $f$ and $f'$ are factor sets associated with $F$,
then we write $(F,f) \approx (F,f')$ if there is an isomorphism of
graded rings from $(F,f)$ to $(F,f')$ such that each graded
restriction to $P_g$, $g \in G_1$, is an
$A_{c(g)}$-$A_{d(g)}$-bimodule isomorphism. 
Let $C(A,F)$ denote the collection of equivalence classes of 
generalized groupoid epsilon-crossed products $(F,f)$ 
modulo $\approx$, where $f$ runs over all factor sets associated with $F$.
\end{defi}

\noindent {\bf Proof of Theorem~\ref{maintheoremepsilon}.}
This follows immediately from Proposition~\ref{proplast}. 
\qed

\end{document}